\definecolor{GGreen}{RGB}{0,128,0}
\title{Weakly Convex Regularisers for Inverse Problems:\\Convergence of Critical Points and Primal-Dual Optimisation}
\author{
  Zakhar Shumaylov\\
  University of Cambridge \\
  \texttt{zs334@cam.ac.uk} \\
   \And
  Jeremy Budd \\
  California Institute of Technology \\
  \texttt{jmbudd@caltech.edu} \\
  \AND
 Subhadip Mukherjee \\
  IIT Kharagpur \\
\texttt{subhadipju@gmail.com} \\
\And
  Carola-Bibiane Sch\"onlieb \\
  University of Cambridge \\
  \texttt{cbs31@cam.ac.uk} \\
 \
}
\numberwithin{equation}{section}
\newtheoremstyle{exampstyle}
{4pt} %
{4pt} %
{\itshape} %
{} %
{\bfseries} %
{.} %
{.5em} %
{} %
\theoremstyle{exampstyle}
\DeclareMathOperator*{\argmin}{argmin}
\newcommand{\be}{\begin{equation}}
\newcommand{\ee}{\end{equation}}
\newcommand{\bes}{\begin{equation*}}
\newcommand{\ees}{\end{equation*}}
\newtheorem{theorem}{Theorem}[section]
\newtheorem{proposition}[theorem]{Proposition}
\newtheorem{lemma}[theorem]{Lemma}
\theoremstyle{definition}
\newtheorem{definition}[theorem]{Definition}
\newtheorem{assumption}[theorem]{Assumption}
\newtheorem{nb}[theorem]{Note}
\newtheorem{example}[theorem]{Example}
\theoremstyle{remark}
\newcommand{\Op}[1]{\operatorname{\mathcal{#1}}}
\newcommand{\scv}[1]{#1}
\newcommand{\wcv}[1]{#1}
\newcommand{\A}{\Op{A}}
\newcommand{\X}{\Op{X}}
\newcommand{\Y}{\Op{Y}}
\newcommand{\R}{\Op{R}}
\newcommand{\D}{\Op{D}}
\newcommand{\x}{x}
\crefname{assumption}{assumption}{assumptions}
\begin{document}
	\maketitle

\begin{abstract}

Variational regularisation is the primary method for solving inverse problems, and recently there has been considerable work leveraging deeply learned regularisation for enhanced performance. However, few results exist addressing the convergence of such regularisation, particularly within the context of critical points as opposed to global minimisers. In this paper, we present a generalised formulation of convergent regularisation in terms of critical points, and show that this is achieved by a class of weakly convex regularisers. We prove convergence of the primal-dual hybrid gradient method for the associated variational problem, and, given a Kurdyka--\L ojasiewicz condition, an $\mathcal{O}(\log{k}/k)$ ergodic convergence rate. 
Finally, applying this theory to learned regularisation, we prove universal approximation for input weakly convex neural networks (IWCNN), and show empirically that IWCNNs can lead to improved performance of learned adversarial regularisers for computed tomography (CT) reconstruction.  \\

\textbf{Key words:} Image reconstruction, inverse problems, learned regularisation, weak convexity, deep learning, computed tomography, convergent regularisation, primal-dual optimisation.
\end{abstract}
\begin{figure}
     \centering
     \includegraphics[width=\linewidth,clip,trim ={0.2 0.2 0.2 0.2}]{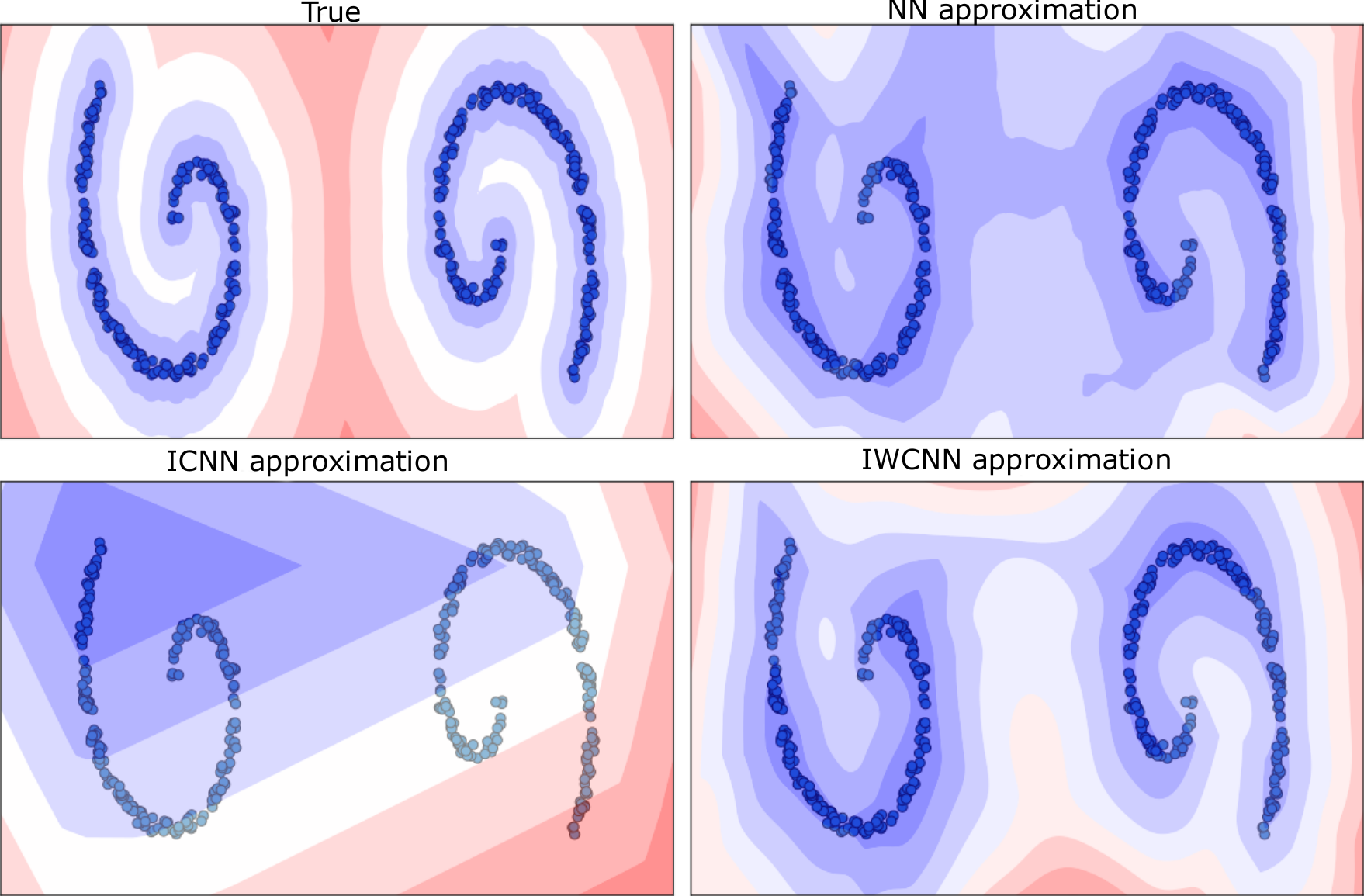}
     \caption{\footnotesize{Contour plots comparing the distance function to a data manifold (top-left) with learned regularisers for denoising this data, via convex (bottom-left), standard (top-right), and weakly convex (bottom-right) adversarial regularisation. The convex regulariser lacks an interpretation as a distance function, whilst the weakly convex regulariser, introduced in this work, retains this feature and shows improved generalisation.}}
     \label{fig:toy_example}
 \end{figure}

\section{Introduction}\label{sec:intro}

In an inverse problem, one seeks to recover an unknown parameter (e.g., an image) that has undergone some, typically lossy and noisy, transformation (e.g., during measurement). Such problems arise frequently in science, e.g. in medical imaging such as in MRI, CT, and PET, and also beyond science, e.g. in art restoration \cite{Natterer2001,scherzer2009variational,calatroni_d’autume_hocking_panayotova_parisotto_ricciardi_schönlieb_2018}. 

Mathematically, one seeks to estimate an unknown parameter $x^* \in \X$ from a transformed and noisy measurement
\begin{equation}
    y^{\delta} = \mathcal{A}x^*+ \eta \in\Y,
    \label{inv_prob_data}
\end{equation}
see \cite{engl1996regularization}. Here $\X$ and $\Y$ are Banach spaces; $\mathcal{A}:\X\rightarrow \Y$ is the \emph{forward model}, assumed to be linear and bounded, e.g. representing imaging physics; and $\eta \in \Y$, with $\left\|\eta \right\|_{\Y}\leqslant \delta$, describes measurement noise. However, \cref{inv_prob_data} is usually \emph{ill-posed}, i.e. $x \in \X$ with $\A x = y^\delta $ may not exist, be unique, or be continuous in $y^{\delta}$.

An influential technique to overcome this ill-posedness has been the variational approach pioneered by \cite{Tikh1963} and \cite{phillips1962}:
\begin{equation}
\label{eq:ful_invprob}
\hspace*{-0.66em}x^\alpha(y^\delta) \in \argmin _{x \in \X} \left\{\alpha \mathcal{R}(x) + \mathcal{D}\left(\A x,y^\delta\right) =: \mathcal{J}_{\alpha, y^\delta}(x)\right\}.
\end{equation}

Here: $\mathcal{R}:\X \rightarrow{[0,\infty)}$ is the \emph{regulariser}, which aims to incorporate prior knowledge about $x^*$ and penalise $x$ if it is not `realistic'; $\mathcal{D}:\Y \times \Y \rightarrow{[0,\infty)}$ is the \textit{data-fidelity term}, which quantifies the distance between $y^\delta$ and $ \A x$ and comes from the distribution of the noise $\eta$; and $\alpha>0$ balances the relative importance of the two terms. A regularisation method is \emph{well-defined} if $x^\alpha(y^\delta)$ is stable with respect to data perturbations and converges, as $\delta\downarrow 0$, to a solution of the noiseless operator equation $\A x = y^0:=\A x^*$.

A good choice of regulariser $\mathcal{R}$ is important for achieving accurate results. Traditional regularisation was \textit{knowledge-driven}: the regulariser was a hand-crafted functional designed to encourage the reconstruction $x$ to have structures known to be realistic. Over the past decades, a whole zoo of such regularisers have been proposed, see e.g. \cite{ROF,BKPock2011, GilboaOsher2009, mallat1999wavelet,kutyniok2012multiscale,Besovnorms,PDEconstrained} and see \cite{Benning_Burger_2018} for a detailed overview. 
However, there is a limit to the complexity of patterns that can be described by a hand-crafted functional. %

Seeking to overcome this limitation, there has been considerable recent interest in \textit{data-driven} approaches to inverse problems, see \cite{Arridge2019} for a detailed overview. A key data-driven technique is to learn the regulariser from data. Such methods include: dictionary learning (see \cite{ChenNeedall16} for an overview), plug-and-play priors \cite{PnP,PnP2023}, regularisation by denoising \cite{RED}, deep image priors \cite{DIP,dittmer2019regularization}, network Tikhonov (NETT) \cite{li2020nett}, total deep variation \cite{TDV2020}, 
generative regularisation (see \cite{dimakis_2022} for an overview), and adversarial regularisation \cite{AR}. These methods typically outperform knowledge-driven regularisation in practice. 

One of the major advantages of the learned regularisation paradigm compared to \textit{end-to-end} learning methods for solving inverse problems, e.g. \cite{postprocessing_cnn,Zhu2018,lpd_tmi}, is that it can operate in the \textit{weakly supervised} setting, i.e. when one has data of ground truths (and measurements) but not of ground-truth-measurement pairs. This setting is often more realistic, especially in medical imaging where paired data can be hard to acquire.
Furthermore, because they do not required paired training data, learned regularisation methods can adapt to other forward models without requiring retraining (see e.g. \cite{Yu2019,lunz_2021}).

\subsection{Motivation and contributions}\label{sec:intro11}

A key challenge in the data-driven paradigm is proving guarantees about the behaviour of the learned models, both for the inverse problem and for optimisation methods for solving it. In \cite{mukherjee2023learned} the role of convexity, via the input convex neural network architecture \cite{ICNN}, was emphasised as a method for achieving such guarantees, e.g. in the case of the adversarial convex regulariser (ACR) \cite{ACR}. %
But convexity is quite a restrictive condition, and the ACR sacrifices the adversarial regulariser's interpretability as a distance function to the data manifold (see \Cref{fig:toy_example}), contributing to worse performance. It has been observed in the literature that non-convex regularisers often have better performance  \cite{4663911,roth2009fields,pieper2022nonconvex}.
Of particular interest is recent work on `optimal' regularisation \cite{alberti2021learning, leong2023optimal}, as such optimal regularisers are typically non-convex. This non-convexity returns us to a setting in which provable guarantees are challenging to achieve. However, the optimal regulariser of \cite{alberti2021learning} is often weakly convex, and weak convexity offers a more structured setting for reasoning about the non-convex case, see e.g.  \cite{pinilla2022improved}.

Optimisation methods in the context of weak convexity can be split into either subgradient or proximal based methods; see \cite{drusvyatskiy2020subgradient} for an introductory review. Subgradient methods admit strong iteration complexity guarantees \cite{davis2018subgradient,davis2018stochastic,goujon2023learning}. 
However, the convergence settings for these methods are somewhat narrow. 
Proximal methods %
admit significantly improved complexity bounds, as well as objective convergence rates \cite{hurault2022proximal}. These rates can be further improved through prox-linearisation \cite{drusvyatskiy2019efficiency,davis2022proximal} or adaptive steps \cite{bohm2021variable}. %

Of particular interest for optimising %
\Cref{eq:ful_invprob} is the proximal primal-dual method, first analysed for convex functions in \cite{ChambollePock}. The first %
guarantees for primal-dual %
in the context of weak convexity were shown in \cite{mollenhoff}, which assumed the \Cref{eq:ful_invprob} to be convex, with a unique minimiser. In the non-convex case, however, rather stringent assumptions had to be made. This was overcome in the Arrow--Hurwicz \cite{Arrow} special case %
in \cite{SUN201815}, via a Kurdyka--{\L}ojasiewicz assumption. In \cite{guo2023preconditioned} a preconditioned primal-dual method was proposed for smooth primal functions, and convergence achieved under operator surjectivity. But for general weakly convex functions, no such convergence was shown. %
We are therefore motivated to fill this gap.

Inspired by the optimisation literature, there has been recent interest in learning weakly convex regularisers \cite{hurault2022proximal,goujon2023learning,shumaylov2023provably}.  
In \cite{shumaylov2023provably}, an input weakly convex neural network (IWCNN) architecture was introduced, and incorporated into the adversarial regularisation framework to learn a convex-nonconvex regulariser (see \cite{lanza2022convex}) with provable guarantees. However, existing convergent regularisation guarantees focus on the convergence \textbf{of} global minimisers.  In practice, optimisation schemes for \eqref{eq:ful_invprob} do not converge \textbf{to} global minimisers, but only \textbf{to} critical points, and as such one should consider convergence \textbf{of} critical points.

There exist only a few works focused on stability and convergence of regularisation in the sense of critical points. 
Under injectivity of $\A$,  \cite{durand2006stability} prove stability guarantees in the finite dimensional setting. In \cite{obmann2022convergence} stability and convergence was shown for so-called $\phi$-critical points, defined in terms of $\phi$-subgradients. But these $\phi$-critical points are \emph{not} true critical points of $\mathcal{J}_{\alpha, y^\delta}$. They can be interpreted as bounded points, with bound quality depending critically on choice of $\phi$,
motivating a return to the usual notions of subgradients. For exact definitions and further discussion, the interested reader is referred to \cite{obmann2022convergence,OBMANN2024128605}. %
The setting of critical points of $\mathcal{J}_{\alpha, y^\delta}$  is a special case of %
\cite{obmann2023convergence}, which showed stability and convergence assuming weak-to-weak continuity and appropriate coercivity of the gradient of the regulariser. But in the context of learned regularisation, differentiability has empirically been observed to worsen performance. 

In this study, we show that imposing weak convexity on the regulariser achieves the best of both worlds. It yields guarantees for both inverse problems and optimisation, whilst also leading to robust numerical performance. We illustrate this from three sides. On the inverse problem front we: \begin{itemize}
    \item[i.] Prove guarantees about existence, in \Cref{thm:existance}, and stability, in \Cref{thm:stability}, of solutions to \eqref{eq:ful_invprob}.
    \item[ii.] Formulate convergent regularisation in terms of critical points in a generalised way in \Cref{def:rminsol}, and prove that it is achieved by a class of weakly convex regularisers in \Cref{thm:convreg}.
\end{itemize}
On the optimisation front we: 
\begin{itemize}
    \item[iii.] Prove convergence guarantees of primal-dual methods for solving \eqref{eq:ful_invprob} under weak convexity in \Cref{thm:pd_results}.
    \item[iv.] Prove an $\mathcal{O}(\log{k}/k)$ ergodic convergence rate for the primal dual scheme under K{\L} in \Cref{thm:ergodic_rate}.
\end{itemize}
On the learned regularisation front we:
\begin{itemize}
    \item[v.] Prove a universal approximation theorem for IWCNNs in \Cref{thm:universal_IWCNN}.
    \item[vi.] Define an adversarial weak convex regulariser (AWCR) in \Cref{def:AWCR} and corroborate the theoretical results via numerical experiments with AWCRs for CT reconstruction, shown in \Cref{sample-table} and \Cref{fig:visual}.
\end{itemize}

\section{Groundwork}

Throughout this paper, $\X$ and $\Y$ denote Banach spaces.

\begin{definition}[$\rho$-convexity]
Let $f: \X \to\mathbb{R}$. %
Then $f$ is said to be $\rho$-convex if there exists some $\rho\in \mathbb{R}$ such that $f - \frac12 \rho \|\cdot\|_{\X}^2$ is convex.
 Then $f$ is said to be $\rho$-strongly convex if $\rho>0$, convex if $\rho = 0$, 
 and ($-\rho$)-weakly convex if $\rho < 0$. Hence, strongly convex entails convex, which entails weakly convex. 
\end{definition}
\begin{definition}[Subdifferential; see e.g. \cite{Kruger2003}]
    Let $f:\X \to \mathbb{R}  \cup \{\infty\}$. Then the \emph{(Frech\'et) subdifferential} of $f$ at $x$ is the empty set if $f(x)=\infty$, and if $f(x)<\infty$ is %
    \begin{align*}
    \partial f(x) := \{ \psi \in \X^* : 
    f(x') \geqslant f(x) + \psi(x'-x) + o(\|x'-x\|_{\X}) 
    \text{ as } x' \to x  
    \}, 
    \end{align*}
    where $\X^*$ denotes the dual space of $\X$, i.e. the space of all continuous linear functionals on $\X$. %
\end{definition}
\begin{nb}
    If $x$ is a local minimiser of $f$, then $0 \in \partial f(x)$.
    If $f$ is (Frech\'et) differentiable at $x$, then $\partial f(x) = \{ \nabla f(x) \}$.
\end{nb}
In what follows we are going to be interested in critical points of the objective function $\mathcal{J}_{\alpha, y^\delta}$. For this we define the set of critical points of $f:\X \to \mathbb{R} \cup \{\infty\}$ as:
\[
\operatorname*{crit}_{x \in \X} f :=\left \{ x \in \X : 0 \in \partial f\left(x\right) \right\} .
\]
For further details on convex analysis, see \Cref{ap:convanalysis}.

\subsection{Convergent regularisation}
The idea behind convergent regularisation is that we do not want to over-regularise or regularise in the wrong way. We want to guarantee that as we tune the noise level $\delta$ down, we can also tune down the regularisation parameter $\alpha$ at an appropriate rate such that all limit points of the resulting reconstructions $x^\alpha(y^\delta)$ are regular and data-consistent. To make this precise, we make the following definitions. 

\begin{definition}[$\R$-minimising and $\R$-criticising solutions]\label{def:rminsol}
    Let $y^0:= \A x^*$ be the clean measurement. If
    \begin{equation*}
    x^{\dagger} \in \argmin_{x\in\X}\R(x) \text{\,\,subject to\,\,}\Op{A}x=y^{0},
\end{equation*}
 then following \cite{poschl2008tikhonov} we call $x^{\dagger}$ an \emph{$\R$-minimising solution}, and considering critical points if
 \begin{align*}
    x^{\dagger} &\in \operatorname*{crit}_{x\in \X}\R(x)  + \iota_{\{0\}}(\A x - y^0) %
\end{align*}
 then we call $x^{\dagger}$ an \emph{$\R$-criticising solution}, 
 where the indicator function
 \[
 \iota_{\{0\}}(z) := \begin{cases}
     0, &\text{if } z=0,\\
     \infty, &\text{otherwise,}
 \end{cases}
 \]
 imposes the condition $\A x = y^0$. In particular, if $x^{\dagger}$ is $\R$-minimising, then it is also $\R$-criticising.
 
\end{definition}
Traditionally in inverse problems, convergence of regularisation has been studied in the sense of conditions under which, as $\delta \to 0$, there exist $\alpha(\delta) \to 0$ such that the limit points of the \textit{global minimisers} of $\mathcal{J}_{\alpha(\delta), y^\delta}$ are $\R$-minimising solutions. I.e., convergence \textbf{of} the minimisers. But in practice optimisation methods rarely converge \textbf{to} global minimisers when $\mathcal{J}_{\alpha, y^\delta}$ is non-convex, converging instead \textbf{to} other critical points. Thus, in this work we will instead ask whether the limit points \textbf{of} the \textit{critical points} of $\mathcal{J}_{\alpha(\delta), y^\delta}$ are $\R$-criticising solutions.

Well-defined regularisation via global minimisers is typically shown by assuming the following pre-compactness condition \cite{grasmair2010generalized,poschl2008tikhonov}: 
For all $\alpha>0$, $y \in \Y$, and $t \in \mathbb{R}$, the set $\left\{x \in \X: \mathcal{J}_{\alpha,y}(x) \leqslant t\right\}$ is sequentially pre-compact. 
In particular, this is satisfied by coercive $\R$, though coercivity may not be necessary, 
see \cite{lorenz2013necessary}. %
But this is insufficient for well-defined regularisation via critical points, as we now illustrate.
\begin{example}
    Inspired by the example in \cite{obmann2022convergence}, we consider $\R(x)=|x|+\cos(x)$, %
    $\A=0$, $\alpha> 0$, and a sequence of measurements $y_k = 0$. In this case the sequence $x_k=2\pi k+\frac{\pi}{2}$ is such that each $x_k$ is a critical point of $\mathcal{J}_{\alpha, y_k}(x) := \alpha \mathcal{R}(x) + \mathcal{D}\left(\A x,y_k\right) 
    $, but $(x_k)$ has no convergent subsequence. Yet $|x|+\cos(x)$ is coercive, and can be checked to be weakly convex. 
Therefore, unlike in the global minimisers setting, regulariser coercivity does not guarantee stability or convergence. 
\end{example}

\subsection{Set-up and bounding the critical points}
 We are interested in Tikhonov functionals $\mathcal{J}_{\alpha, y^\delta}: \X \rightarrow [0,\infty)$. 
Instead of working with global minimisers of $\mathcal{J}_{\alpha, y^\delta}$, we consider regularised solutions $x_\alpha^\delta \in \operatorname{crit}\mathcal{J}_{\alpha, y^\delta}$ as critical points of $\mathcal{J}_{\alpha, y^\delta}$
We wish to construct non-convex regularisers which still achieve convergent regularisation in terms of critical points. By the discussion above, we will need to make assumptions on $\R$ which bound the set of critical points. %

    \begin{theorem}[Bounded critical points]\label{thm:bounded}
        Let $\R = R_{wc}+R_{sc}$ where $R_{wc}:\X\to[0,\infty)$ is  $\gamma$-weak convex and $R_{sc}$ is $\mu$-strongly convex. 
        For any $\hat{x}$ a critical point of $\R$, we have that for all $z\in\X$: If $\gamma<2\mu$, 
\begin{equation*}
 \|\hat{x}-z\|_{\X} \leqslant \frac{1}{(\mu-\frac{\gamma}{2})}\|\partial R_{sc}(z)\|+ \sqrt{\frac{1}{(\mu-\frac{\gamma}{2})}R_{wc}(z)},
\end{equation*}
           or if $R_{wc}$ is $L_R$-Lipschitz continuous, 
            \begin{equation*}
 \|\hat{x}-z\|_{\X}\leqslant \frac{1}{\mu}\left(L_R+\|\partial R_{sc}(z)\|\right).
\end{equation*}
        Here $\|\partial R_{sc}(z)\|:= \sup \{ \|\psi\|_{\X^*} : \psi \in \partial R_{sc}(z) \}$.
    \end{theorem}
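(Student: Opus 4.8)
The plan is to combine three convexity inequalities — one expressing the $\gamma$-weak convexity of $R_{wc}$ at the critical point $\hat x$, and one expressing the strong monotonicity of $\partial R_{sc}$ — and then reduce the whole thing to a scalar quadratic inequality in $t:=\|\hat x-z\|_{\X}$.

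First, since $\hat x$ is a critical point of $\R$ we have $0\in\partial\R(\hat x)$. Invoking the subdifferential sum rule (valid here because $R_{sc}$ is finite-valued and convex, hence locally Lipschitz and subdifferentially regular), there exist $\psi\in\partial R_{wc}(\hat x)$ and $p\in\partial R_{sc}(\hat x)$ with $\psi+p=0$. The $\gamma$-weak convexity of $R_{wc}$, i.e.\ convexity of $R_{wc}+\tfrac{\gamma}{2}\|\cdot\|_{\X}^2$, gives the subgradient inequality
\[
R_{wc}(z)\ \geq\ R_{wc}(\hat x)+\psi(z-\hat x)-\tfrac{\gamma}{2}\|z-\hat x\|_{\X}^2\ \geq\ \psi(z-\hat x)-\tfrac{\gamma}{2}\|z-\hat x\|_{\X}^2,
\]
where the last step uses $R_{wc}\geq 0$. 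Next, $\mu$-strong convexity of $R_{sc}$ makes $\partial R_{sc}$ $\mu$-strongly monotone, so for any $q\in\partial R_{sc}(z)$ we have $(q-p)(z-\hat x)\geq \mu\|z-\hat x\|_{\X}^2$; since $p=-\psi$ this reads $\psi(z-\hat x)\geq \mu\|z-\hat x\|_{\X}^2-q(z-\hat x)$.

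Substituting this lower bound for $\psi(z-\hat x)$ into the previous display yields
\[
R_{wc}(z)\ \geq\ \bigl(\mu-\tfrac{\gamma}{2}\bigr)\|z-\hat x\|_{\X}^2-q(z-\hat x)\ \geq\ \bigl(\mu-\tfrac{\gamma}{2}\bigr)\|z-\hat x\|_{\X}^2-\|\partial R_{sc}(z)\|\,\|z-\hat x\|_{\X}.
\]
When $\gamma<2\mu$ this is a quadratic inequality $at^2-bt-c\leq 0$ with $a=\mu-\gamma/2>0$, $b=\|\partial R_{sc}(z)\|\geq 0$, $c=R_{wc}(z)\geq 0$; bounding $t$ by the positive root and using $\sqrt{b^2+4ac}\leq b+2\sqrt{ac}$ gives $t\leq b/a+\sqrt{c/a}$, which is the first claimed bound. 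For the second bound, if $R_{wc}$ is $L_R$-Lipschitz then $\|\psi\|_{\X^*}\leq L_R$, so starting instead from $\mu\|z-\hat x\|_{\X}^2\leq (q+\psi)(z-\hat x)\leq \bigl(\|\partial R_{sc}(z)\|+L_R\bigr)\|z-\hat x\|_{\X}$ and dividing by $\|z-\hat x\|_{\X}$ (the estimate being trivial when $z=\hat x$) gives the result, with no constraint on $\gamma$ needed.

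I expect the only genuinely technical point to be the subdifferential calculus in the Banach setting: the sum rule at $\hat x$, which turns $0\in\partial\R(\hat x)$ into a matched pair $\psi\in\partial R_{wc}(\hat x)$, $-\psi\in\partial R_{sc}(\hat x)$, and the weak-convexity subgradient inequality, whose derivation from the definition uses the identity $\|z\|_\X^2-\|\hat x\|_\X^2$ unfolded via the duality map (immediate in the Hilbert setting the applications live in). Everything downstream is elementary scalar algebra.
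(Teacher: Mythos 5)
Your proposal is correct and follows essentially the same route as the paper's proof: both decompose $0\in\partial\R(\hat x)$ into subgradients of the two pieces, combine the $\mu$-strong monotonicity of $\partial R_{sc}$ with the weak-convexity subgradient inequality (plus $R_{wc}\geqslant 0$) or the Lipschitz bound on $\partial R_{wc}(\hat x)$, and then solve the resulting scalar quadratic in $\|\hat x - z\|_{\X}$ (the paper completes the square and applies $\sqrt{a+b}\leqslant\sqrt a+\sqrt b$, which is the same algebra as your root bound). Your explicit remark on the subdifferential sum rule is, if anything, slightly more careful than the paper, which asserts $0\in\partial R_{wc}(\hat x)+\partial R_{sc}(\hat x)$ without comment.
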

    \begin{proof}
        Proven in \Cref{ap:proof_bounded}.
    \end{proof}
\begin{nb}
Note that $\R$ is $(\mu-\gamma)$-convex. Thus, $\R$ can posses multiple minimisers when $\mu\leqslant\gamma<2\mu$. %
This convexity bound is optimal, up to equality, and we provide an intuitive explanation for it, constructing an example with infinitely many unbounded minimisers in \Cref{ap:counterexample}. 
\end{nb}

\section{Inverse problem guarantees} \label{sec:InvP}
In this section, we show that weakly convex regularisation is well-defined, under the following assumptions, which are standard in the literature (see \cite{obmann2022convergence,grasmair2010generalized,poschl2008tikhonov}), except for \Cref{assumptions}(3b), which is imposed to avoid a counterexample (see \Cref{ap:infDcounterexample}) that arises in the infinite-dimensional setting. 

\begin{assumption}\label{assumptions}
\begin{itemize} 
\item[(1)] 
$\X$ is a reflexive Banach space.
\item[(2)] $\mathcal{R}$ is weakly sequentially l.s.c.
\item[(3)] $\mathcal{R}=R_{wc}+R_{sc}$, where $R_{wc}:\X\to[0,\infty)$ is  $\gamma$-weak convex, $R_{sc}$ is $\mu$-strongly convex, and either 
\begin{itemize}
    \item[(a)] $\gamma \leqslant \mu$ (i.e., $\R$ is convex), or
    \item[(b)] $\mu < \gamma < 2\mu$, and $R_{sc} - \frac12 \mu \|\cdot\|_{\X}^2$ is weakly sequentially l.s.c.
\end{itemize}  
This, in particular, means that $\mathcal{R}$ is coercive, and therefore so is $\mathcal{J}_{\alpha,y^\delta}$, since $\mathcal{D}$ is non-negative.
\item[(4)] $\mathcal{D}$ is  weakly sequentially l.s.c., convex in its first argument, continuous in its second argument, and $\D(y_1,y_2) = 0$ if and only if $y_1 = y_2$. 
\item[(5)] There exist $C>0$ and $p\geqslant1$ s.t. for all $y_1, y_2, y_3 \in \Y$, $ \mathcal{D}(y_1, y_2) \leqslant C\left(\mathcal{D}\left(y_1, y_3\right)+\|y_2- y_3\|_{\Y}^p\right)$.
\end{itemize}
\end{assumption}
\subsection{Existence and stability of solutions}

\begin{theorem}[Existence]\label{thm:existance} Under \Cref{assumptions} solutions exist, i.e. for all $\alpha>0$ and $y^\delta \in \Y$, $\operatorname{crit} \mathcal{J}_{\alpha, y^\delta}$ is non-empty.
\end{theorem}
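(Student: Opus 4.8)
The plan is to prove existence by the direct method of the calculus of variations: I will show that $\mathcal{J}_{\alpha, y^\delta}$ attains a global minimiser, which is automatically a critical point, since a global minimiser is in particular a local one and hence, by the Note following the definition of the subdifferential, satisfies $0 \in \partial \mathcal{J}_{\alpha, y^\delta}(\hat{x})$. Thus it suffices to produce a minimiser.

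First, observe that $\mathcal{J}_{\alpha, y^\delta}$ is proper and bounded below by $0$, since both $\R$ and $\D$ take values in $[0,\infty)$. Take a minimising sequence $(x_n) \subseteq \X$ with $\mathcal{J}_{\alpha, y^\delta}(x_n) \to \inf_{x \in \X}\mathcal{J}_{\alpha, y^\delta}(x) < \infty$. By \Cref{assumptions}(3), $\R$ — and hence $\mathcal{J}_{\alpha, y^\delta}$, as $\D \geqslant 0$ — is coercive, so $(x_n)$ is bounded in $\X$. Since $\X$ is reflexive by \Cref{assumptions}(1), after passing to a subsequence (not relabelled) we may assume $x_n \rightharpoonup \bar{x}$ for some $\bar{x} \in \X$.

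Second, I verify weak sequential lower semicontinuity of $\mathcal{J}_{\alpha, y^\delta}$ along $(x_n)$. For the regularisation term this is exactly \Cref{assumptions}(2) applied to $\R$, giving $\R(\bar{x}) \leqslant \liminf_n \R(x_n)$. For the data term, since $\A$ is bounded and linear it is weak-to-weak sequentially continuous, so $\A x_n \rightharpoonup \A \bar{x}$ in $\Y$; combined with the weak sequential lower semicontinuity of $\D$ from \Cref{assumptions}(4) (its second argument being fixed at $y^\delta$), this yields $\D(\A\bar{x}, y^\delta) \leqslant \liminf_n \D(\A x_n, y^\delta)$. Adding the two inequalities gives $\mathcal{J}_{\alpha, y^\delta}(\bar{x}) \leqslant \liminf_n \mathcal{J}_{\alpha, y^\delta}(x_n) = \inf_{x \in \X}\mathcal{J}_{\alpha, y^\delta}(x)$, so $\bar{x}$ is a global minimiser, whence $0 \in \partial \mathcal{J}_{\alpha, y^\delta}(\bar{x})$ and $\operatorname{crit} \mathcal{J}_{\alpha, y^\delta} \neq \emptyset$.

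The argument is essentially the classical one, so I do not expect a serious obstacle; the only point requiring any care is the weak lower semicontinuity of the composite $x \mapsto \D(\A x, y^\delta)$, which I handle via weak-to-weak continuity of the bounded linear operator $\A$ together with \Cref{assumptions}(4). It is worth noting that, in contrast to the stability and convergence results, \Cref{assumptions}(3b) is not really needed for existence beyond guaranteeing coercivity: even in the genuinely non-convex regime $\mu < \gamma < 2\mu$, the global minimiser supplies the single critical point the statement requires.
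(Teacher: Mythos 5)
Your proof is correct and is exactly the argument the paper's one-line proof alludes to: the direct method, using coercivity from \Cref{assumptions}(3), reflexivity from \Cref{assumptions}(1), and weak sequential lower semicontinuity of $\R$ and of $x \mapsto \D(\A x, y^\delta)$ to obtain a global minimiser, which is then a critical point. No issues; you have simply written out the details the paper leaves implicit.
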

\begin{proof}\vspace{-1em}
Existence of minimisers of $\mathcal{J}_{\alpha, y^\delta}$ follows from the coercivity and the continuity assumptions on $\mathcal{J}_{\alpha, y^\delta}$. 
\end{proof}
\begin{theorem}[Stability] \label{thm:stability} 
Let $y_k,y^\delta \in \Y$, $\alpha>0$, and $y_k \rightarrow y^\delta$ (in norm), and assume that $x_k \in \X$ is such that $x_k
\in \mathrm{crit}\,\, \mathcal{J}_{\alpha, y_k}$. Then under \Cref{assumptions} the sequence $\left(x_k\right)$ has a weakly convergent subsequence and the weak limit $x_{+}$ of any such subsequence is a critical point of $\mathcal{J}_{\alpha, y^\delta}$.
\end{theorem}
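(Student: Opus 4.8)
The plan is to run the classical ``coercivity $+$ weak lower semicontinuity'' template for stability of variational regularisation, with two twists: the global minimality of $x_k$ is replaced by a perturbed minimality encoding criticality, and a strong-convergence step is inserted to absorb the weakly convex term. First I would show $(x_k)$ is bounded. Since $\mathcal{D}$ is convex in its first argument and $\A$ is bounded linear, $\mathcal{J}_{\alpha,y_k}$ decomposes as $\alpha R_{wc}+\big(\alpha R_{sc}+\mathcal{D}(\A\cdot,y_k)\big)$, a sum of an $\alpha\gamma$-weakly convex and an $\alpha\mu$-strongly convex function, with $\gamma<2\mu$ by \Cref{assumptions}(3) (recall $\mu>0$). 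As $x_k\in\operatorname{crit}\mathcal{J}_{\alpha,y_k}$, \Cref{thm:bounded} bounds $\|x_k-z\|_{\X}$, for any fixed $z\in\X$, by $\tfrac{1}{\alpha(\mu-\gamma/2)}\big\|\partial\big(\alpha R_{sc}+\mathcal{D}(\A\cdot,y_k)\big)(z)\big\|$ plus a $k$-independent constant; by the subdifferential sum rule for convex continuous functions, this is in turn controlled by $\|\partial_1\mathcal{D}(\A z,y_k)\|$, which I would bound uniformly in $k$ using that, for $\psi\in\partial_1\mathcal{D}(\A z,y_k)$ and $\|h\|_{\Y}\leqslant1$, non-negativity of $\mathcal{D}$ and the subgradient inequality give $\psi(h)\leqslant\mathcal{D}(\A z+h,y_k)$, while \Cref{assumptions}(5) bounds this by $C\big(\mathcal{D}(\A z+h,y^\delta)+\|y_k-y^\delta\|_{\Y}^p\big)$, uniformly bounded since $\mathcal{D}(\cdot,y^\delta)$ is locally bounded and $y_k\to y^\delta$. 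Hence $\sup_k\|x_k\|_{\X}<\infty$, and reflexivity of $\X$ (\Cref{assumptions}(1)) gives a subsequence $x_{k_j}\rightharpoonup x_+$.

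Next I would use the reformulation: $x\in\operatorname{crit}\mathcal{J}_{\alpha,y}$ if and only if $x$ is a global minimiser of $x'\mapsto\mathcal{J}_{\alpha,y}(x')+\tfrac{\alpha\gamma}{2}\|x'-x\|_{\X}^2$. One implication follows by adding the $\alpha\gamma$-weakly convex subgradient inequality for $\alpha R_{wc}$ at $x$ to the convex subgradient inequalities for $\alpha R_{sc}$ and $\mathcal{D}(\A\cdot,y)$ at $x$ (whose subgradients sum to $0$ by criticality and the sum rule); the converse uses that $\|\cdot-x\|_{\X}^2$ is Fr\'echet differentiable at $x$ with vanishing derivative. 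Applying this at each $x_{k_j}$ with an arbitrary fixed $x'$ yields
\[
\mathcal{J}_{\alpha,y_{k_j}}(x')+\tfrac{\alpha\gamma}{2}\|x'-x_{k_j}\|_{\X}^2\;\geqslant\;\mathcal{J}_{\alpha,y_{k_j}}(x_{k_j}).
\]
Passing to the limit, the first term on the left tends to $\mathcal{J}_{\alpha,y^\delta}(x')$ by continuity of $\mathcal{D}$ in its second argument, and $\liminf_j\mathcal{J}_{\alpha,y_{k_j}}(x_{k_j})\geqslant\mathcal{J}_{\alpha,y^\delta}(x_+)$ by weak sequential l.s.c.\ of $\mathcal{R}$ and $\mathcal{D}$ (using $\A x_{k_j}\rightharpoonup\A x_+$ and $y_{k_j}\to y^\delta$). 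To push the quadratic term through the limit and conclude $\mathcal{J}_{\alpha,y^\delta}(x')+\tfrac{\alpha\gamma}{2}\|x'-x_+\|_{\X}^2\geqslant\mathcal{J}_{\alpha,y^\delta}(x_+)$ for every $x'$ — which is exactly $0\in\partial\mathcal{J}_{\alpha,y^\delta}(x_+)$ — I would need $\|x'-x_{k_j}\|_{\X}\to\|x'-x_+\|_{\X}$, i.e.\ strong convergence $x_{k_j}\to x_+$.

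Establishing this strong convergence is the main obstacle, and is where the two cases of \Cref{assumptions}(3) separate. In the convex case \Cref{assumptions}(3a), $\mathcal{J}_{\alpha,y}$ is convex, so critical points are global minimisers, the perturbation term is redundant, and the argument reduces to the classical stability proof for minimisers \cite{poschl2008tikhonov,grasmair2010generalized}, for which weak convergence suffices. In the genuinely weakly convex case \Cref{assumptions}(3b), I would instead take $x'=x_+$ in the perturbed-minimality inequality and combine it with the $\mu$-strong convexity of $R_{sc}$ and the weak sequential l.s.c.\ of $R_{sc}-\tfrac12\mu\|\cdot\|_{\X}^2$ (the extra hypothesis in 3b) to force $\|x_{k_j}\|_{\X}\to\|x_+\|_{\X}$; together with $x_{k_j}\rightharpoonup x_+$ and the Radon--Riesz property, this yields $x_{k_j}\to x_+$. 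Hypothesis 3b is indispensable here: without it the weak limit of critical points need not be a critical point in the infinite-dimensional setting, which is precisely the counterexample \Cref{assumptions}(3b) is designed to rule out. With strong convergence in hand, the limiting argument of the previous paragraph applies to any weakly convergent subsequence, giving $x_+\in\operatorname{crit}\mathcal{J}_{\alpha,y^\delta}$ and completing the proof.
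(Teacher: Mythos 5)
Your first half is sound and essentially the paper's own argument: criticality plus the $\mu$-strong convexity of $\alpha R_{sc}+\mathcal{D}(\A\,\cdot,y_k)$ gives a $k$-uniform bound on $\|x_k-z\|_{\X}$ (the paper's \cref{eq:gammamubdd}), and reflexivity supplies the weakly convergent subsequence; routing the uniformity through subgradients of $\mathcal{D}$ rather than through $\mathcal{D}(\A z,y_k)\to\mathcal{D}(\A z,y^\delta)$ is a harmless variation. The second half, however, has a concrete gap in the only place where case (3b) is genuinely hard. Your strong-convergence bootstrap does not go through with the perturbation coefficient $\tfrac{\alpha\gamma}{2}$ you chose. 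Taking $x'=x_+$ and splitting $R_{sc}=(R_{sc}-\tfrac{\mu}{2}\|\cdot\|_{\X}^2)+\tfrac{\mu}{2}\|\cdot\|_{\X}^2$, the quadratic terms combine as $\tfrac{\alpha\mu}{2}\|x_{k_j}\|_{\X}^2-\tfrac{\alpha\gamma}{2}\|x_+-x_{k_j}\|_{\X}^2=\tfrac{\alpha(\mu-\gamma)}{2}\|x_{k_j}\|_{\X}^2+(\text{weakly continuous terms})$, and since $\mu<\gamma$ in case (3b) the surviving coefficient of $\|x_{k_j}\|_{\X}^2$ is \emph{negative}, so weak lower semicontinuity points the wrong way and you cannot extract $\limsup_j\|x_{k_j}\|_{\X}\leqslant\|x_+\|_{\X}$. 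You must use the tight modulus: $\mathcal{J}_{\alpha,y}$ is $\alpha(\gamma-\mu)$-weakly convex, and with the perturbation $\tfrac{\alpha(\gamma-\mu)}{2}\|x'-x_{k_j}\|_{\X}^2$ the surviving coefficient becomes $\tfrac{\alpha(2\mu-\gamma)}{2}>0$ --- this is exactly where the hypothesis $\gamma<2\mu$ enters, and your sketch never invokes it. Two smaller points: upgrading norm convergence to strong convergence requires the Radon--Riesz property, which a general reflexive Banach space (\Cref{assumptions}(1)) need not possess (it is harmless in the Hilbert setting of \Cref{nb:fd}, but the theorem is stated more generally); and the $\liminf$ of the split sum needs $R_{wc}$, not just $\mathcal{R}$, to be weakly sequentially l.s.c.

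The paper avoids this entire obstruction: it never establishes strong convergence and never passes the quadratic perturbation through the limit as a standalone term. It keeps the criticality inequality in the form $\mathcal{J}_{\alpha,y_k}(x_k)\leqslant\mathcal{J}_{\alpha,y_k}(u)+\tfrac{\alpha}{2}(\gamma-\mu)\|x_k-u\|_{\X}^2$ and, in case (3b), moves the quadratic to the small side and regroups with $R_{sc}-\tfrac{\mu}{2}\|\cdot\|_{\X}^2$ so that every norm-squared term that must survive a $\liminf$ carries a non-negative coefficient; weak sequential lower semicontinuity alone then delivers $\mathcal{J}_{\alpha,y^\delta}(u)\geqslant\mathcal{J}_{\alpha,y^\delta}(x_+)+\tfrac{\alpha}{2}(\mu-\gamma)\|u-x_+\|_{\X}^2$ for all $u$, i.e.\ $0\in\partial\mathcal{J}_{\alpha,y^\delta}(x_+)$. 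Once you replace $\tfrac{\alpha\gamma}{2}$ by $\tfrac{\alpha(\gamma-\mu)}{2}$, your argument becomes a somewhat longer route to the same inequality, and the strong-convergence detour can be dropped altogether.
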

\begin{proof}\vspace{-1em}
    Proven in \Cref{ap:stabilityproof}. 
\end{proof}
In words: 
the reconstruction from a measurement at a given noise level (i.e., a critical point of the variational energy) is (weakly) continuous with respect to perturbations of the measurement (up to subsequences).

\subsection{Convergent regularisation}

\begin{theorem} \label{thm:convreg} Let \Cref{assumptions} hold. Let $y^0 \in \Y$ and assume that $y^0 = \A u$ for some $u \in \X$. Let $\delta_k \downarrow 0$, and choose $\alpha=\alpha(\delta)$ such that for $\alpha_k=\alpha\left(\delta_k\right)$ we have $\lim _k \alpha_k=\lim _k \delta_k^p / \alpha_k=0$, where $p$ is the same $p$ from \Cref{assumptions}(5).
Let $y^{\delta_k}\in \Y$  be any sequence of measurements with $\|y^{\delta_k} - y^0\|_{\Y} \leqslant \delta_k$, and let 
$x_k \in \X$ satisfy $x_k
\in \mathrm{crit}\,\, \mathcal{J}_{\alpha_k, y^{\delta_k}}$. Then the sequence $\left(x_k\right)$ has a weakly convergent subsequence and the weak limit $x_{+}$ of any such subsequence is an $\R$-criticising solution.
If there is a unique $u \in \X$ such that $\A u = y^0$, then $x_k \rightharpoonup u$. %
\end{theorem}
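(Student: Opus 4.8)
The plan is to mirror the classical argument that Tikhonov regularisation is convergent, but with \emph{critical point of} $\mathcal{J}_{\alpha_k,y^{\delta_k}}$ in place of \emph{global minimiser}, so that every appeal to a minimisation inequality is replaced by one coming from criticality together with $\rho$-convexity. Write $\mathcal{J}_k:=\mathcal{J}_{\alpha_k,y^{\delta_k}}=\alpha_k\R+\D(\A\cdot,y^{\delta_k})$. Since $\alpha_k R_{wc}$ is $\alpha_k\gamma$-weakly convex, $\alpha_k R_{sc}$ is $\alpha_k\mu$-strongly convex and $\D(\A\cdot,y^{\delta_k})$ is convex, $\mathcal{J}_k$ is $\alpha_k(\mu-\gamma)$-convex and inherits the weak-plus-strongly-convex structure of \Cref{thm:bounded} with the $\alpha_k$-scaled moduli, which still satisfy $\alpha_k\gamma<2\alpha_k\mu$. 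First I would invoke \Cref{thm:bounded} for $\mathcal{J}_k$ at the point $z=u$ (with $\alpha_k R_{wc}+\D(\A\cdot,y^{\delta_k})$ playing the role of the weakly convex part): the factor $\alpha_k$ cancels in the subgradient term, while \Cref{assumptions}(5) with $y_3=y^0$ gives $\D(\A u,y^{\delta_k})=\D(y^0,y^{\delta_k})\leqslant C\delta_k^p$, so the square-root term is controlled by $R_{wc}(u)+C\delta_k^p/\alpha_k$, which stays bounded because $\delta_k^p/\alpha_k\to0$. Hence $\sup_k\|x_k-u\|_{\X}<\infty$, and by reflexivity of $\X$ (\Cref{assumptions}(1)) a subsequence of $(x_k)$ converges weakly; relabel so that $x_k\rightharpoonup x_+$.

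Next I would show $\A x_+=y^0$. Criticality plus $\alpha_k(\mu-\gamma)$-convexity yields the global inequality $\mathcal{J}_k(u)\geqslant\mathcal{J}_k(x_k)+\tfrac{\alpha_k(\mu-\gamma)}{2}\|u-x_k\|_{\X}^2$, which rearranges (using $\R\geqslant0$) to
\[
0\leqslant\D(\A x_k,y^{\delta_k})\leqslant\alpha_k\R(u)+\D(y^0,y^{\delta_k})+\tfrac{\alpha_k(\gamma-\mu)_+}{2}\sup_j\|u-x_j\|_{\X}^2\longrightarrow0,
\]
by \Cref{assumptions}(5), the first step, $\alpha_k\to0$ and $\delta_k^p/\alpha_k\to0$; so $\D(\A x_k,y^{\delta_k})\to0$, and then $\D(\A x_k,y^0)\leqslant C(\D(\A x_k,y^{\delta_k})+\delta_k^p)\to0$ by \Cref{assumptions}(5) again. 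As $\A$ is bounded and linear, $\A x_k\rightharpoonup\A x_+$, so weak sequential lower semicontinuity of $\D$ (\Cref{assumptions}(4)) gives $\D(\A x_+,y^0)\leqslant\liminf_k\D(\A x_k,y^0)=0$, whence $\A x_+=y^0$ by \Cref{assumptions}(4): $x_+$ is feasible for the constrained problem in \Cref{def:rminsol}.

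The crux is showing that $x_+$ is $\R$-criticising, and the main obstacle is expected to be upgrading the weak convergence of $(x_k)$ to strong convergence in $\X$ — which, unlike in the minimiser case, cannot simply be read off from lower semicontinuity of the energy. Because $\D(\cdot,y^{\delta_k})$ is finite, convex and continuous (so $\operatorname{dom}\D(\cdot,y^{\delta_k})=\Y$), the sum and chain rules for Fréchet subdifferentials hold with no surjectivity hypothesis on $\A$; thus from $0\in\partial\mathcal{J}_k(x_k)$ I extract $\phi_k\in\partial_1\D(\A x_k,y^{\delta_k})$ with $-\A^*\lambda_k\in\partial\R(x_k)$, where $\lambda_k:=\alpha_k^{-1}\phi_k$, and convexity of $\D(\cdot,y^{\delta_k})$ tested at $y^0$ together with $\D\geqslant0$ gives $\langle\lambda_k,y^0-\A x_k\rangle\leqslant C\delta_k^p/\alpha_k\to0$. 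For strong convergence I would split $0\in\alpha_k\partial R_{sc}(x_k)+\partial\!\left(\alpha_k R_{wc}+\D(\A\cdot,y^{\delta_k})\right)(x_k)$, writing $s_k\in\alpha_k\partial R_{sc}(x_k)$ and $-s_k$ for the second summand, and then (i) apply $\mu$-strong monotonicity of $\partial R_{sc}$ relative to a fixed $v^*\in\partial R_{sc}(x_+)$ (which exists since $R_{sc}$ is convex, finite and weakly lsc by \Cref{assumptions}(3)), and (ii) test the $\alpha_k\gamma$-weak-convexity subgradient inequality for $-s_k$ at $x_+$, using $\A x_+=y^0$, $R_{wc}\geqslant0$, $\D\geqslant0$ and $\D(y^0,y^{\delta_k})\leqslant C\delta_k^p$. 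Combining (i) and (ii) gives
\[
\left(\mu-\tfrac{\gamma}{2}\right)\|x_k-x_+\|_{\X}^2+R_{wc}(x_k)\;\leqslant\;R_{wc}(x_+)+C\delta_k^p/\alpha_k-\langle v^*,x_k-x_+\rangle.
\]
Passing to the limit — the right side tends to $R_{wc}(x_+)$, while $\liminf_k R_{wc}(x_k)\geqslant R_{wc}(x_+)$ by weak lower semicontinuity — and using $\mu-\tfrac{\gamma}{2}>0$ (i.e.\ $\gamma<2\mu$, \Cref{assumptions}(3)), this forces $\|x_k-x_+\|_{\X}\to0$.

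Granting strong convergence, I finish as follows. Since $\R$ is $(\gamma-\mu)_+$-weakly convex, the membership $-\A^*\lambda_k\in\partial\R(x_k)$ is equivalent to $\R(x')\geqslant\R(x_k)-\langle\lambda_k,\A x'-\A x_k\rangle-\tfrac{(\gamma-\mu)_+}{2}\|x'-x_k\|_{\X}^2$ for \emph{all} $x'\in\X$; restricting to $x'$ with $\A x'=y^0$, bounding $\langle\lambda_k,y^0-\A x_k\rangle\leqslant C\delta_k^p/\alpha_k$, and letting $k\to\infty$ (using $x_k\to x_+$, $\delta_k^p/\alpha_k\to0$ and $\liminf_k\R(x_k)\geqslant\R(x_+)$ from \Cref{assumptions}(2)) yields $\R(x')\geqslant\R(x_+)-\tfrac{(\gamma-\mu)_+}{2}\|x'-x_+\|_{\X}^2$ for every feasible $x'$. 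Since $\iota_{\{0\}}(\A\cdot-y^0)$ vanishes on the feasible set and is $+\infty$ off it, this is exactly $\bigl(\R+\iota_{\{0\}}(\A\cdot-y^0)\bigr)(x')\geqslant\bigl(\R+\iota_{\{0\}}(\A\cdot-y^0)\bigr)(x_+)-\tfrac{(\gamma-\mu)_+}{2}\|x'-x_+\|_{\X}^2$ for all $x'\in\X$, so $0\in\partial\bigl(\R+\iota_{\{0\}}(\A\cdot-y^0)\bigr)(x_+)$ and $x_+$ is an $\R$-criticising solution. Finally, if $u$ is the unique solution of $\A u=y^0$, then by the feasibility step every weak subsequential limit of $(x_k)$ equals $u$, and the standard subsequence argument gives $x_k\rightharpoonup u$ (in fact $x_k\to u$). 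Beyond the strong-convergence step, the remaining delicate point is making the infinite-dimensional subdifferential calculus for $\mathcal{J}_k$ rigorous — this is where the finite-valuedness and continuity of $\D$ (so that no surjectivity of $\A$ is needed) and the lower-semicontinuity hypotheses in \Cref{assumptions}(2)--(3), in particular (3b), are used.
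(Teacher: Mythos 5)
Your first two steps (boundedness via the $\gamma<2\mu$ structure and \Cref{assumptions}(5), then $\A x_+=y^0$ via $\D(\A x_k,y^{\delta_k})\to0$ and weak lower semicontinuity) match the paper's argument, and your final criticality inequality is exactly the paper's target, namely $\R(x')\geqslant\R(x_+)+\tfrac12(\mu-\gamma)\|x'-x_+\|_{\X}^2$ for all feasible $x'$. In the convex case $\gamma\leqslant\mu$ your limit passage is sound. The genuine gap is in the case $\mu<\gamma<2\mu$, where your whole argument hinges on the strong-convergence step, and that step contains an unjustified claim: you pass to the limit in $(\mu-\tfrac{\gamma}{2})\|x_k-x_+\|_{\X}^2+R_{wc}(x_k)\leqslant R_{wc}(x_+)+C\delta_k^p/\alpha_k-\langle v^*,x_k-x_+\rangle$ using ``$\liminf_k R_{wc}(x_k)\geqslant R_{wc}(x_+)$ by weak lower semicontinuity''. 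But \Cref{assumptions}(2) only makes $\R$ weakly sequentially l.s.c., and \Cref{assumptions}(3b) only adds $R_{sc}-\tfrac12\mu\|\cdot\|_{\X}^2$; neither gives weak sequential l.s.c.\ of $R_{wc}$, and a non-negative weakly convex function need not have it: in $\ell^2$, $R_{wc}(x)=\max\{0,1-\|x\|^2\}$ is $2$-weakly convex yet $R_{wc}(e_k)=0\not\to\liminf\geqslant 1=R_{wc}(0)$ along $e_k\rightharpoonup 0$. (Trying to repair this by writing $R_{wc}=\R-R_{sc}$ fails because $-R_{sc}$ is weakly u.s.c., the wrong direction.) So strong convergence is not established, and without it the term $-\tfrac{(\gamma-\mu)_+}{2}\|x'-x_k\|_{\X}^2$ in your final inequality cannot be passed to the limit, since $\|x'-x_k\|_{\X}$ can jump up under weak convergence. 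Note the paper itself never claims strong convergence in infinite dimensions (see the remark reserving it for the finite-dimensional setting); instead, in case (3b) it expands $\|x_k-u\|_{\X}^2=\|x_k\|_{\X}^2-2\langle x_k,u\rangle+\|u\|_{\X}^2$ and regroups so that only the weakly l.s.c.\ combination $\R-\tfrac12\mu\|\cdot\|_{\X}^2$, the weakly l.s.c.\ norm with a \emph{positive} coefficient $\tfrac12(2\mu-\gamma)$, and weakly continuous inner products appear on the $\liminf$ side --- this is precisely the role of \Cref{assumptions}(3b), and is the mechanism you need to adopt.

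A secondary, non-fatal difference: you extract $-\A^*\lambda_k\in\partial\R(x_k)$ via exact sum and chain rules for Fr\'echet subdifferentials and then test the $(\mu-\gamma)$-convexity inequality for $\R$ alone against $\lambda_k$. This can be made rigorous here (everything is finite-valued and continuous, so Moreau--Rockafellar applies after convexifying), but it is heavier than necessary: the paper works only with the global inequality $\mathcal{J}_k(x')\geqslant\mathcal{J}_k(x_k)+\tfrac12\alpha_k(\mu-\gamma)\|x'-x_k\|_{\X}^2$, which follows directly from $0\in\partial\mathcal{J}_{\alpha_k,y^{\delta_k}}(x_k)$ and $\alpha_k(\mu-\gamma)$-convexity of the whole functional, with no subdifferential calculus at all; dropping $\D\geqslant0$ and using $\D(\A u,y^{\delta_k})\leqslant C\delta_k^p$ then yields your \cref{eq:Rineq}-type bound directly. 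I recommend switching to that route and replacing the strong-convergence step by the paper's regrouping argument for case (3b).
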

\begin{proof}\vspace{-1em}
    Proven in \Cref{ap:convregproof}. 
\end{proof}
In words: in the limit as the noise level vanishes, there is a regularisation parameter selection strategy under which reconstructions from measurements at each noise level converge to a solution of the noiseless operator equation, up to a subsequence unless such a solution is unique. %

\begin{nb}
\Cref{thm:stability,thm:convreg} do not hold in the general setting where $\R$ is weakly convex without \Cref{assumptions}(2,3), even assuming that $\R$ is globally Lipschitz; for a counterexample see \Cref{ap:infDcounterexample}. 
\end{nb}
\begin{nb}
    If $\X$ is a \textit{separable} reflexive Banach space, then the subsequences in 
\Cref{thm:stability,thm:convreg} can be explicitly constructed, as the Banach--Alaoglu theorem has a constructive proof in this special case. We omit the details.
\end{nb}
\begin{nb}\label{nb:fd}
    If we make the further assumption that $\X$ is a finite-dimensional Hilbert space then \Cref{thm:stability,thm:convreg} hold with strong convergence of the iterates, with \Cref{assumptions}(3) replaced with that $R_{wc}$ is globally Lipschitz or that $\gamma < 2\mu$. In the remainder of this paper, we will assume that $\X$ and $\Y$ are finite-dimensional real Hilbert spaces. 
\end{nb}

\section{Optimisation guarantees}\label{sec:Opt}

In this section, we analyse the primal-dual algorithm in the weakly convex setting. All results are in the setting of \Cref{assumptions} with the modification described in \Cref{nb:fd}.

\subsection{Primal-dual optimisation}
The idea of primal-dual optimisation is to reformulate \cref{eq:ful_invprob} as a minimax problem. First, we rewrite \cref{eq:ful_invprob} as 
\begin{equation}\label{eq:invprob2}
    \min_{x \in \X} \R(x) + F(\A x), 
\end{equation}
where $F(y) := \D(y,y^\delta) $. By \Cref{assumptions}(4), $F$ is convex and l.s.c.,
so by \Cref{thm:dualdual} we can rewrite \cref{eq:invprob2} as the minimax problem:
\[
\min _{x \in \X} \max _{y \in \Y} L(x,y):=\R(x)+ \langle \A x, y \rangle_{\Y}  -F^*(y).
\]
This can then be solved via the modified primal-dual hybrid gradient method (PDHGM) due to \cite{ChambollePock}. The method consists of the following updates for step sizes $\tau,\sigma > 0$:{
{
\begin{align} \label{eq:pdhgm_update}
x^{k+1}  \nonumber
&:=\argmin_{x\in \X} \left\{\R(x)+\left\langle y^k ,\A x\right\rangle_{\Y}+\frac{1}{2 \tau}\left\|x-x^k\right\|_{\X}^2\right\},\\ 
x_\vartheta^{k+1} & :=x^{k+1}+\vartheta\left(x^{k+1}-x^k\right), \\ \nonumber
y^{k+1} 
&:=\argmin_{y\in \Y} \left\{F^*(y)-\left\langle y, \A x_\vartheta^{k+1} \right\rangle_{\Y}+\frac{1}{2 \sigma}\left\|y - y^k\right\|_{\Y}^2\right\}.%
\end{align}
}}
Note that for non-convex $\R$, the first update may not be unique for all choices of $\tau$. However for $\rho$-weakly convex $\R$ it is unique for $\tau<1/\rho$, as shown in \Cref{thm:moreauenv_wc}.
This method, in the case of non-convexity and K{\L} (see \Cref{ap:KL}) was analysed for $\vartheta=0$ in \cite{SUN201815}. Here, we are interested in $\vartheta=1$ similar to \cite{ChambollePock}, due to connections to the proximal point method and the alternating direction method of multipliers \cite{lu2023unified}. We will use weak convexity of $\R$ and $\mu$-strong convexity of $F^*$, satisfied, e.g., when $F$ is convex and $1/\mu$-smooth.
\subsection{Convergence of PDHGM}
First we rewrite our problem in a nice form in analogy with \cite{lu2023unified}. Let
 $z:=(x, y)$ %
 and %
 define \[
T(z):=\begin{pmatrix}\partial_x L(x, y) \\ -\partial_y L(x, y)\end{pmatrix} = \begin{pmatrix}
    \partial\R(x) + \A^* y\\ 
    \partial F^*(y) - \A x
\end{pmatrix} 
,\]
where $\A^*:\Y \to \X$ is the adjoint of $\A$, i.e. for all $x \in \X$ and $y \in \Y$, $\langle \A x, y \rangle_{\Y} = \langle x, \A^* y\rangle_{\X}$.  
Setting $z^k :=(x^k,y^k)$, the update rule of PDHGM can be written %
as
\begin{equation*}
M\left(z^k-z^{k+1}\right) \in T\left(z^{k+1}\right), \:\: \text{ for } \:\:
M := \begin{pmatrix}
\frac{1}{\tau} I & -\A^*\\
-\vartheta\A & \frac{1}{\sigma} I
\end{pmatrix}.
\end{equation*}

For $\vartheta=1$, $M$ is a self-adjoint positive semi-definite operator if $\tau\sigma\|\A\|^2<1$. Henceforth, we only consider $\vartheta = 1$.
We define the following Lyapunov function, whose critical points coincide with those of $L$:
\begin{equation}\label{eq:lyapunov}
\mathcal{L}(z, z'):=L(z)+\frac12\|z'-z\|_{M}^2,
\end{equation}
where 
 \[\|z\|^2_M := \left\langle z,Mz \right\rangle_{\X\times\Y} %
= \frac{1}{\tau}\|x\|_{\X}^2 - 2 \langle \A x, y\rangle_{\Y} + \frac{1}{\sigma}\|y\|^2_{\Y}
.\] For $x \in \X$ and $y \in \Y$, we denote $\|y\|_M:=\|\left(0,y \right)\|_M = \frac1{\sqrt{\sigma}}\|y\|_{\Y}$, and similarly  $\|x\|_M:=\|\left(x,0\right)\|_M=\frac1{\sqrt{\tau}}\|x\|_{\X}$. %
\begin{theorem}[Strict descent]\label{thm:descent} 
For $(x^k,y^k)$ satisfying the PDHGM updates \Cref{eq:pdhgm_update} with $\vartheta = 1$, $\R$ $\wcv{\rho}$-weak convex, $F^*$ $\scv{\mu}$-strong convex, and step sizes satisfying $\tau\sigma\|\A\|^2<1$, $\tau\wcv{\rho}<1$, and $\mu\scv{\sigma}>3$, the following descent holds for the Lyapunov function \Cref{eq:lyapunov}:
\begin{align*}
\mathcal{L}(z^{k}, z^{k-1})-\mathcal{L}(z^{k+1}, z^{k})\geqslant \frac{1}{2}(\scv{\mu}\sigma-3)\left\|y^{k}-y^{k+1}\right\|_{M}^2 +\frac{1}{2}(1-\wcv{\rho}\tau)\left\|x^{k}-x^{k+1}\right\|_{M}^2  %
\end{align*}
\end{theorem}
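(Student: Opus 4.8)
The plan is to decompose the Lyapunov decrement as
\[
\mathcal{L}(z^{k}, z^{k-1})-\mathcal{L}(z^{k+1}, z^{k}) = \big(L(z^{k})-L(z^{k+1})\big) + \tfrac12\|z^{k-1}-z^{k}\|_M^2 - \tfrac12\|z^{k}-z^{k+1}\|_M^2,
\]
to lower-bound the saddle-value change $L(z^{k})-L(z^{k+1})$ using the strong convexity of the two proximal subproblems, to expand the $\|\cdot\|_M$-terms into their $\X$-, $\Y$- and $\langle\A\,\cdot,\cdot\rangle_{\Y}$-components, and then to dispose of the surviving bilinear cross terms by Young's inequality together with the step-size bounds. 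First I would record the optimality conditions behind \Cref{eq:pdhgm_update}, i.e.\ $M(z^{k}-z^{k+1})\in T(z^{k+1})$ unpacked as $\tfrac1\tau(x^k-x^{k+1})-\A^* y^k\in\partial\R(x^{k+1})$ and $\tfrac1\sigma(y^k-y^{k+1})+\A(2x^{k+1}-x^k)\in\partial F^*(y^{k+1})$, together with the analogous inclusion $M(z^{k-1}-z^{k})\in T(z^{k})$ one step earlier. Note that $\tau\rho<1$ makes the $x$-subproblem $(\tfrac1\tau-\rho)$-strongly convex (so $x^{k+1}$ is well-defined, cf.\ \Cref{thm:moreauenv_wc}), the $y$-subproblem $(\tfrac1\sigma+\mu)$-strongly convex, and $\tau\sigma\|\A\|^2<1$ makes $M$ self-adjoint positive semidefinite, so $\|\cdot\|_M$ is a genuine seminorm and polarisation applies.

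Next I would establish the saddle-value descent. Testing the $(\tfrac1\tau-\rho)$-strong-convexity inequality of the $x$-subproblem at the point $x^{k}$ gives $\R(x^k)-\R(x^{k+1})+\langle\A(x^k-x^{k+1}),y^k\rangle_{\Y}\geqslant(\tfrac1\tau-\tfrac\rho2)\|x^k-x^{k+1}\|_{\X}^2$, while $\mu$-strong convexity of $F^*$ applied at $y^{k}$ to the dual subgradient $\phi^{k}=\tfrac1\sigma(y^{k-1}-y^k)+\A(2x^k-x^{k-1})$ from the previous step gives $F^*(y^{k+1})-F^*(y^k)\geqslant\langle\phi^k,y^{k+1}-y^k\rangle_{\Y}+\tfrac\mu2\|y^k-y^{k+1}\|_{\Y}^2$. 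Substituting these, together with the definition of $L$, the pure bilinear terms cancel and the rest telescopes into second differences; writing $a:=z^{k-1}-z^k$ and $b:=z^k-z^{k+1}$ with components $a_x,a_y,b_x,b_y$, this yields
\[
L(z^k)-L(z^{k+1})\;\geqslant\;\big(\tfrac1\tau-\tfrac\rho2\big)\|b_x\|_{\X}^2+\tfrac\mu2\|b_y\|_{\Y}^2+\langle\A(a_x-b_x),b_y\rangle_{\Y}-\tfrac1\sigma\langle a_y,b_y\rangle_{\Y}.
\]

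Finally I would combine this with the expansion $\tfrac12\|a\|_M^2-\tfrac12\|b\|_M^2=\tfrac1{2\tau}(\|a_x\|_{\X}^2-\|b_x\|_{\X}^2)-\langle\A a_x,a_y\rangle_{\Y}+\langle\A b_x,b_y\rangle_{\Y}+\tfrac1{2\sigma}(\|a_y\|_{\Y}^2-\|b_y\|_{\Y}^2)$. The $\|b_x\|_{\X}^2$-terms combine to exactly $(\tfrac1{2\tau}-\tfrac\rho2)\|b_x\|_{\X}^2=\tfrac12(1-\rho\tau)\|x^k-x^{k+1}\|_M^2$ and the $\langle\A b_x,b_y\rangle_{\Y}$ terms cancel, so after collecting, what remains to be shown nonnegative is $\tfrac1\sigma\|b_y\|_{\Y}^2+\tfrac1{2\tau}\|a_x\|_{\X}^2+\tfrac1{2\sigma}\|a_y\|_{\Y}^2+\langle\A a_x,b_y-a_y\rangle_{\Y}-\tfrac1\sigma\langle a_y,b_y\rangle_{\Y}$. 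Bounding the $\A$-cross term by Young with the weight that exactly absorbs the $\|a_x\|_{\X}^2$ budget, $\langle\A a_x,b_y-a_y\rangle_{\Y}\geqslant-\tfrac1{2\tau}\|a_x\|_{\X}^2-\tfrac{\tau\|\A\|^2}{2}\|b_y-a_y\|_{\Y}^2$, and then completing the square in $a_y,b_y$, this equals $\tfrac1{2\sigma}\|b_y\|_{\Y}^2+\tfrac1{2\sigma}(1-\sigma\tau\|\A\|^2)\|b_y-a_y\|_{\Y}^2\geqslant0$ by $\tau\sigma\|\A\|^2<1$. The "$3$" in $\tfrac12(\mu\sigma-3)\|y^k-y^{k+1}\|_M^2=(\tfrac\mu2-\tfrac3{2\sigma})\|b_y\|_{\Y}^2$ is then the sum of three $\tfrac1{2\sigma}$-scale debits against the dual gain $\tfrac\mu2$: one from the $-\tfrac12\|z^k-z^{k+1}\|_M^2$ term and two more to complete the square; $\mu\sigma>3$ and $\tau\rho<1$ are exactly what turn the bound into a positive-definite quadratic form in the increments.

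I expect the bilinear bookkeeping of the last step to be the main obstacle: one must pick the test points ($x^k$ for the primal inequality, $y^k$ for the dual one, forcing an asymmetry in $k$) so that the surviving cross terms assemble into $\A$ applied to the second differences $a_x-b_x$, $a_y-b_y$ rather than into uncontrolled previous increments, and then the delicate point is that these, together with the $\|\cdot\|_M$-expansion terms, form a perfect square plus a positive term precisely under $\tau\sigma\|\A\|^2<1$ — with a different Young weight the $\|a_x\|_{\X}^2$ or $\|b_y\|_{\Y}^2$ budget is exhausted and the constants degrade. A secondary subtlety is that $M$ is only positive semidefinite, so all strong convexity must be drawn from the separate $\R$- and $F^*$-pieces rather than from the joint variable $z$, and one works throughout with the seminorm $\|\cdot\|_M$.
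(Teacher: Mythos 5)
Your proposal is correct: I checked the componentwise bookkeeping and every step closes. Writing $a=z^{k-1}-z^k$, $b=z^k-z^{k+1}$, your intermediate bound $L(z^k)-L(z^{k+1})\geqslant(\tfrac1\tau-\tfrac\rho2)\|b_x\|_{\X}^2+\tfrac\mu2\|b_y\|_{\Y}^2+\langle\A(a_x-b_x),b_y\rangle_{\Y}-\tfrac1\sigma\langle a_y,b_y\rangle_{\Y}$ follows exactly as you say from the $(\tfrac1\tau-\rho)$-strong convexity of the primal subproblem tested at $x^k$ and the $\mu$-strong convexity of $F^*$ tested with the stale subgradient $\phi^k\in\partial F^*(y^k)$, and the final Young-plus-complete-the-square step does reduce to $\tfrac1{2\sigma}\|b_y\|_{\Y}^2+\tfrac1{2\sigma}(1-\sigma\tau\|\A\|^2)\|b_y-a_y\|_{\Y}^2\geqslant 0$. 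The route differs from the paper's in execution rather than in substance. The paper (\Cref{ap:descent}) stays at the level of the operator form $M(z^k-z^{k+1})\in T(z^{k+1})$: it derives a single three-point ``gap'' inequality for $L(x^{k+1},y)-L(x,y^{k+1})$ in the $M$-seminorm, applies it at steps $k$ and $k-1$ with the test point $z=(x^k,y^{k+1})$, and disposes of the one surviving cross term $\langle a,(0,b_y)\rangle_M$ by Cauchy--Schwarz in $\|\cdot\|_M$; that crude step is precisely where the constant $3=1+2$ originates. You instead unpack everything into $\X$- and $\Y$-components and aim the Young inequality at the specific term $\langle\A a_x,b_y-a_y\rangle_{\Y}$. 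The paper's version is shorter and reuses the gap inequality \cref{eq:k_gap} later for the ergodic rates; yours is more laborious but sharper: your final display retains a surplus $\tfrac1{2\sigma}\|b_y\|_{\Y}^2$, so your argument in fact proves the descent with $\mu\sigma-2$ in place of $\mu\sigma-3$ (hence suffices for the stated theorem a fortiori), which is consistent with the paper's own remark that these parameter bounds are likely not optimal. The only point worth making explicit in a write-up is that $\phi^k$ is a genuine element of $\partial F^*(y^k)$ because it is the optimality condition of the dual subproblem at step $k-1$, so the argument applies for $k\geqslant1$, exactly the range on which $\mathcal{L}(z^k,z^{k-1})$ is defined.
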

\begin{proof}
    Proven in \Cref{ap:descent}.
\end{proof}
Using this, we can prove the following under iterate boundedness, which is a standard assumption in analysis of non-convex optimisation \cite{SUN201815,bolte2014proximal}.
\begin{theorem}\label{thm:pd_results}
Assume that $\inf_{x\in \X} L(x,y) > -\infty$ for all $y\in \Y$, the $z^k = (x^k,y^k)$ are bounded, $\R$ is $\wcv{\rho}$-weak convex, $F^*$ is $\scv{\mu}$-strong convex, $\tau\sigma\|\A\|^2<1$, $\tau\wcv{\rho}<1$, and $\mu\scv{\sigma}>3$. Let $\nu := \min\{\scv{\mu}\sigma-3,1-\wcv{\rho}\tau\}$, then 
\[
\min _{k\leqslant K} \operatorname{dist}\left(0, \partial L(z^k)\right) \leqslant \frac{2}{\sqrt{\nu K}} \sqrt{\mathcal{L}(z^1,z^0) - \mathcal{L}(z^{K+1},z^{K})}.
\]
Furthermore, the iterates are square-summable: 
\begin{align*}
\sum_{k=1}^{\infty}\left\|x^{k+1}-x^k\right\|_{\X}^2<\infty, && \sum_{k=1}^{\infty}\left\|y^{k+1}-y^k\right\|_{\Y}^2<\infty.
\end{align*}
Letting $\mathcal{C}$ denote the set of cluster points of $z^k$, we have that $\mathcal{C}$ is a nonempty compact set and 
$$
\lim _{k \rightarrow \infty} \operatorname{dist}\left(\left(x^k, y^k\right), \mathcal{C}\right)=0, 
$$
with $\mathcal{C} \subseteq \operatorname{crit} L$ and $L$ is finite and constant on $\mathcal{C}$.
\end{theorem}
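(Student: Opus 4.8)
The plan is to run the standard Lyapunov--descent machinery for nonconvex descent methods (in the spirit of \cite{SUN201815,bolte2014proximal}), using the strict descent inequality of \Cref{thm:descent} as the engine, and then harvest the three conclusions (rate, square-summability, structure of the cluster set) in turn. First I would establish that $k\mapsto\mathcal{L}(z^{k+1},z^k)$ is non-increasing: this is immediate from \Cref{thm:descent} since the step-size conditions force $\mu\sigma-3>0$ and $1-\rho\tau>0$, so the right-hand side there is non-negative; hence this sequence is bounded above by $\mathcal{L}(z^1,z^0)$. For boundedness below, note $\mathcal{L}(z^{k+1},z^k)\geqslant L(z^{k+1})$ because $M$ is positive semi-definite, and $L(x,y)\geqslant g(y):=\inf_{x'}L(x',y)$; the function $g$ is concave (an infimum of the maps $L(x',\cdot)$, each an affine term minus the convex $F^*$), and by the assumption $\inf_x L(x,y)>-\infty$ together with finiteness of $\R$ it is real-valued on all of $\Y$, hence continuous in finite dimensions and thus bounded below on the bounded (hence relatively compact) set $\{y^k\}$. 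Consequently $\mathcal{L}(z^{k+1},z^k)\downarrow\ell$ for some finite $\ell$.

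Telescoping the inequality of \Cref{thm:descent} over $k$ then gives $\sum_k\bigl[\tfrac12(\mu\sigma-3)\|y^k-y^{k+1}\|_M^2+\tfrac12(1-\rho\tau)\|x^k-x^{k+1}\|_M^2\bigr]\leqslant\mathcal{L}(z^1,z^0)-\ell<\infty$; since both coefficients are strictly positive and $\|\cdot\|_M$ on each component is a fixed positive multiple of $\|\cdot\|_{\X}$, resp.\ $\|\cdot\|_{\Y}$, this is exactly the asserted square-summability, and in particular $\|z^{k+1}-z^k\|\to 0$. For the rate, I use the operator form of the PDHGM update, $M(z^k-z^{k+1})\in T(z^{k+1})=\partial L(z^{k+1})$, which gives $\operatorname{dist}(0,\partial L(z^{k+1}))\leqslant\|M(z^k-z^{k+1})\|_{M^{-1}}=\|z^k-z^{k+1}\|_M$ ($M$ being positive definite under $\tau\sigma\|\A\|^2<1$), together with $\|z^k-z^{k+1}\|_M^2\leqslant 2\bigl(\|x^k-x^{k+1}\|_M^2+\|y^k-y^{k+1}\|_M^2\bigr)$, which follows from Young's inequality and $\tau\sigma\|\A\|^2\leqslant 1$. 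Bounding the minimum over $k\leqslant K$ by the average, inserting these two norm bounds and the telescoped descent estimate, and collecting constants via $\nu=\min\{\mu\sigma-3,1-\rho\tau\}$, yields the claimed $\mathcal{O}(1/\sqrt{K})$ bound.

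It remains to analyse $\mathcal{C}$. Boundedness of $(z^k)$ gives $\mathcal{C}\neq\emptyset$, and $\mathcal{C}=\bigcap_N\overline{\{z^k:k\geqslant N\}}$ is closed and bounded, hence compact; $\operatorname{dist}(z^k,\mathcal{C})\to 0$ is the usual contradiction argument (a subsequence with $\operatorname{dist}(\cdot,\mathcal{C})\geqslant\varepsilon$ would, by boundedness, have a further subsequence converging into $\mathcal{C}$). For $\bar z=\lim_j z^{k_j}\in\mathcal{C}$: by the square-summability of the previous step $\|z^{k_j}-z^{k_j-1}\|\to 0$, so $M(z^{k_j-1}-z^{k_j})\in\partial L(z^{k_j})$ tends to $0$; since $\partial\R$ has closed graph (it equals $\partial(\R+\tfrac{\rho}{2}\|\cdot\|_{\X}^2)(\cdot)-\rho(\cdot)$ with $\R+\tfrac{\rho}{2}\|\cdot\|_{\X}^2$ convex) and $\partial F^*$ is maximal monotone, the operator $T$ has closed graph, whence $0\in T(\bar z)=\partial L(\bar z)$, i.e.\ $\mathcal{C}\subseteq\operatorname{crit}L$. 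Finally $L(z^{k+1})=\mathcal{L}(z^{k+1},z^k)-\tfrac12\|z^k-z^{k+1}\|_M^2\to\ell$ by the first two steps, while along $z^{k_j}\to\bar z$ one has $L(z^{k_j})\to L(\bar z)$ (continuity of $\R$ and of the bilinear term, lower semicontinuity of $F^*$ together with the upper estimate supplied by its strong-convexity inequality evaluated at the bounded dual subgradients $\tfrac1\sigma(y^{k_j-1}-y^{k_j})+\A x_\vartheta^{k_j}$), so $L\equiv\ell$ on $\mathcal{C}$, which is finite.

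I expect the main obstacle to be the last step, $\mathcal{C}\subseteq\operatorname{crit}L$ together with the constancy of $L$ on $\mathcal{C}$: this is where the nonconvexity of $\R$ (and the concave--convex nature of $L$) genuinely enters, requiring an outer-semicontinuity/closed-graph property of the non-monotone operator $T$ and a convergence-of-function-values argument that must handle the possibly extended-real-valued $F^*$. Everything else is a mechanical consequence of \Cref{thm:descent}, the positive-definiteness of $M$, and boundedness of the iterates.
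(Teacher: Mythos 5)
Your proof is correct and follows essentially the same route as the paper: telescoping the strict descent inequality of \Cref{thm:descent} to obtain square-summability and the $\mathcal{O}(1/\sqrt{K})$ bound via the update inclusion $M(z^k-z^{k+1})\in T(z^{k+1}) = \partial L(z^{k+1})$, followed by the standard cluster-point analysis in the style of \cite{bolte2014proximal}. The paper only writes out the first part and defers the cluster-set arguments to the cited references, so your explicit treatment of the lower bound on $\mathcal{L}$ (via concavity of $y\mapsto\inf_x L(x,y)$), the closed-graph property of $T$, and the convergence of $L$-values along subsequences supplies details that the paper omits but does not contradict.
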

\begin{proof}\vspace{-1em}
    Proven in \Cref{as:pd_results}.
\end{proof}
\begin{nb}
It is important to note that parameter bounds $\tau\sigma\|\A\|^2<1$, $\tau\wcv{\rho}<1$, and $\mu\scv{\sigma}>3$ are likely not optimal. It is, however, always possible to choose step sizes to satisfy these, taking $\tau < \min\left\{\frac{1}{\rho},\frac{\mu}{3\|\A\|^2}\right\}$ and $\sigma \in \left(\frac{3}{\mu},\frac{1}{\tau\|\A\|^2}\right)$.
\end{nb}
\subsection{Ergodic convergence rate}\label{sec:ergodic}
We now prove ergodic convergence rates, bounding the primal-dual gap for the PDHGM. To derive these, we will need bounds on iterate distance from a general point. These are not entailed by weak convexity, so we make the further assumption that the Lyapunov function $\mathcal{L}$ has the Kurdyka--\L ojasiewicz property (see \Cref{ap:KL} for details). By the following note, this assumption is not too restrictive. 
\begin{nb}
By \Cref{thm:NNKL}, if $\R$ is a deep neural network with continuous, piecewise analytic activations with finitely many pieces (e.g., \texttt{ReLU}), then $\R$ is subanalytic.
If $F(y) := \frac{1}{2\alpha}\|y - y^\delta\|_{\Y}^2$ then all the remaining terms in $\mathcal{L}$ are analytic. It follows that $\mathcal{L}$ is subanalytic (by e.g. Lemma 7.4 of \cite{BuddJRS}) and therefore is K{\L} with K{\L} exponent in $[0,1)$ on all of its domain \cite{bolte2007lojasiewicz}. 
\end{nb}
The following results are standard in the literature, but will prove useful for proving ergodic convergence rates. 
\begin{theorem}\label{thm:KLconvergence}
Suppose that $\mathcal{L}$ is K\L~ and the $z^k$ are bounded, then $z^k$ converges to a critical point $\hat z$ of $L$ and
$$
\sum_{k=1}^{\infty}\left\|z^{k+1}-z^k\right\|_M<\infty.
$$    
Furthermore, if $\mathcal{L}$ has K{\L} exponent $\theta\in[0,1)$ at $\hat z$, we have that there exist constants $\nu_1,\nu_2>0$ and $ \tau\in(0,1)$, s.t.:\\
If $\theta=0$, the sequence $z^k$ converges in finite steps.\\
If $\theta \in\left(0, \frac{1}{2}\right]$, $\left\|z^k-\hat{z}\right\|_M \leqslant \nu_1 \tau^{k}.$\\
If $\theta \in\left(\frac{1}{2}, 1\right)$, $\left\|z^k-\hat{z}\right\|_M \leqslant \nu_2 k^{-\frac{1-\theta}{2 \theta-1}}$.
\end{theorem}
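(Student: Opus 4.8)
The plan is to place the PDHGM, viewed through its Lyapunov function $\mathcal{L}$, inside the abstract framework for convergence of descent schemes on Kurdyka--\L ojasiewicz functions (in the style of Attouch--Bolte and \cite{bolte2014proximal}), and to read off the stated rates from the K\L~ exponent. I would track the shifted iterate $w^k := (z^k, z^{k-1})$, so that $\mathcal{L}(w^k) = \mathcal{L}(z^k, z^{k-1})$, and verify three structural ingredients: (H1) \emph{sufficient decrease}, namely $\mathcal{L}(w^k) - \mathcal{L}(w^{k+1}) \geqslant \tfrac{\nu}{2}\big(\|x^{k+1}-x^k\|_M^2 + \|y^{k+1}-y^k\|_M^2\big)$ with $\nu$ as in \Cref{thm:pd_results}, which is exactly \Cref{thm:descent}; (H2) a \emph{relative-error bound} $\operatorname{dist}(0,\partial\mathcal{L}(w^{k+1})) \leqslant b\,\|w^{k+1}-w^k\|_M$ for some $b>0$; and (H3) a \emph{continuity condition} along cluster points. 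Boundedness of $(z^k)$ together with \Cref{thm:pd_results} already supplies: a nonempty compact cluster set $\mathcal{C}\subseteq\operatorname{crit} L$ on which $L$ is finite and constant; monotone convergence $\mathcal{L}(w^k)\downarrow$ to that common value; and $\|z^{k+1}-z^k\|_M\to 0$ with the step sizes square-summable.

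For (H2), I would differentiate $\mathcal{L}(z,z') = L(z)+\tfrac12\|z'-z\|_M^2$ by the sum rule: $\partial_{z'}\mathcal{L}(z,z') = M(z'-z)$ and $\partial_z\mathcal{L}(z,z') = \partial L(z) + M(z-z')$, where $\partial L$ is the limiting subdifferential of the genuine finite-valued map $z=(x,y)\mapsto \R(x)+\langle\A x,y\rangle_{\Y}-F^*(y)$ (finite-valued under \Cref{assumptions} in the setting of \Cref{nb:fd}), which agrees blockwise with $\pm T$. Substituting the implicit PDHGM optimality condition $M(z^k-z^{k+1})\in T(z^{k+1})$ into $\partial_z\mathcal{L}(z^{k+1},z^k)$ makes the $x$-block cancel exactly and leaves the $y$-block equal to a multiple of $(M(z^{k+1}-z^k))$ (owing to the sign flip between $T_2$ and $\partial_y L$), while $\partial_{z'}\mathcal{L}(w^{k+1}) = M(z^k-z^{k+1})$; hence $\operatorname{dist}(0,\partial\mathcal{L}(w^{k+1}))\leqslant b\,\|z^{k+1}-z^k\|_M\leqslant b\,\|w^{k+1}-w^k\|_M$ with $b$ depending only on $\|\A\|,\tau,\sigma$. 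For (H3), lower semicontinuity of $\mathcal{L}$ gives $\liminf_j \mathcal{L}(w^{k_j})\geqslant\mathcal{L}(\bar w)$ along any convergent subsequence $w^{k_j}\to\bar w = (\bar z,\bar z)$ (the two blocks share a limit because $\|z^{k+1}-z^k\|_M\to0$), while the matching $\limsup$ follows from the minimising property of the $x$- and $y$-updates in \Cref{eq:pdhgm_update} by comparing the optimal values with the values at $\bar x$ and $\bar y$; moreover in the case $F(y)=\tfrac1{2\alpha}\|y-y^\delta\|_{\Y}^2$ relevant to the K\L~ discussion, $\mathcal{L}$ is simply continuous and (H3) is immediate.

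With (H1)--(H3) and the K\L~ property of $\mathcal{L}$, the abstract convergence theorem yields that $(w^k)$, and therefore $(z^k)$, has finite length, $\sum_k\|z^{k+1}-z^k\|_M<\infty$, and converges to a single limit $(\hat z,\hat z)$; since $\hat z$ is then a cluster point of $(z^k)$, \Cref{thm:pd_results} gives $\hat z\in\operatorname{crit} L$. For the rates, I would run the standard Attouch--Bolte argument at $\hat z$ with desingularising function $\varphi(s)=c\,s^{1-\theta}$: combining the K\L~ inequality $\varphi'(\mathcal{L}(w^k)-\mathcal{L}(\hat w))\,\operatorname{dist}(0,\partial\mathcal{L}(w^k))\geqslant 1$ with (H1) and (H2) produces a recursion for the tail lengths $\Gamma_k:=\sum_{j\geqslant k}\|z^{j+1}-z^j\|_M$. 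If $\theta=0$, the K\L~ inequality keeps $\operatorname{dist}(0,\partial\mathcal{L}(w^k))$ bounded away from $0$ until $\mathcal{L}(w^k)=\mathcal{L}(\hat w)$, which by (H1) must occur after finitely many steps; if $\theta\in(0,\tfrac12]$ the recursion contracts, giving geometric decay of $\Gamma_k$; if $\theta\in(\tfrac12,1)$ the familiar comparison with $u_{k+1}\leqslant u_k - c\,u_k^{1/(2(1-\theta))}$ gives $\Gamma_k = \mathcal{O}\big(k^{-(1-\theta)/(2\theta-1)}\big)$. Since $\|z^k-\hat z\|_M\leqslant\Gamma_k$ (and $\|z^k-\hat z\|_M\leqslant\|w^k-\hat w\|_M$), these bounds transfer verbatim to $\|z^k-\hat z\|_M$, yielding the three asserted regimes.

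I expect the main obstacle to be the interplay of (H2) and (H3) on the \emph{product-space} Lyapunov function: arranging the subdifferential sum rule and the implicit PDHGM optimality conditions so that $\operatorname{dist}(0,\partial\mathcal{L}(w^{k+1}))$ is genuinely controlled by the current step $\|w^{k+1}-w^k\|_M$ rather than by stale increments, and verifying the $\limsup$ half of the continuity condition via the update minimisations; together with the bookkeeping in the sublinear regime $\theta\in(\tfrac12,1)$, where the recursion must be set up for the tail sums $\Gamma_k$ rather than for the function gap alone.
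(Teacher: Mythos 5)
The paper does not prove this theorem itself; it defers entirely to \cite{guo2023preconditioned,SUN201815}, and those references carry out precisely the Attouch--Bolte/Bolte--Sabach--Teboulle descent-scheme argument you outline (sufficient decrease from \Cref{thm:descent}, the relative-error bound $\operatorname{dist}(0,\partial\mathcal{L}(z^{k+1},z^k))\leqslant 3\|M(z^{k+1}-z^k)\|$ already derived in the proof of \Cref{thm:pd_results}, uniformised K\L{} on the compact cluster set, and the standard exponent-dependent rate recursion). Your proposal is correct and takes essentially the same approach as the proof the paper points to, and you correctly identify the one genuine subtlety --- that both the decrease and the subgradient bound must be paired with the \emph{current} increment $\|z^{k+1}-z^k\|_M$ rather than the full product-space step $\|w^{k+1}-w^k\|_M$.
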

\begin{proof} \vspace{-1em}%
See
\cite{guo2023preconditioned,SUN201815}.
\end{proof}
This lets us derive the corresponding ergodic rate. 
\begin{theorem}\label{thm:ergodic_rate}
Assume that the bounded sequence $z^k=(x^k,y^k)$ are PDHGM iterates from \Cref{eq:pdhgm_update} with $\vartheta = 1$, $\R$ $\wcv{\rho}$-weak convex and $F^*$ $\scv{\mu}$-strong convex, satisfying $\tau\sigma\|\A\|^2<1$, $\tau\wcv{\rho}<1$, and $\mu\scv{\sigma}>3$. Assume that  $\mathcal{L}$ is K{\L}, with K{\L} exponent $\theta\in[0,1)$ at $\hat z$, denoting the limit $z^k \to \hat{z} = (\hat x,\hat y)$, and let $\bar{x}^k$ and $\bar{y}^k$ denote the means of $(x^i)_{i=1}^k$ and of $(y^i)_{i=1}^k$, respectively. %
Then, for all $x\in \X$ and $y \in \Y$:
\begin{equation*}
 L\left(\bar{x}^k, y\right)-L\left(x, \bar{y}^k\right) \leqslant \frac{1}{2}\left(\rho\|x-\hat{x}\|_{\X}^2 - \mu \|y-\hat{y}\|_{\Y}^2 \right)+\mathcal{O}\left(\frac{\log k}{k}\right).   
\end{equation*}
\end{theorem}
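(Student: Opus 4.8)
The plan is to combine the standard one-step variational inequality for each of the two PDHGM updates with the $\ell^1$-summability of the increments $\|z^{k+1}-z^k\|_M$ coming from \Cref{thm:KLconvergence}. First I would write down, for the $x$-update in \Cref{eq:pdhgm_update}, the optimality condition $M_x(x^k - x^{k+1}) - \A^* y^k \in \partial\R(x^{k+1})$, and use $\rho$-weak convexity of $\R$ to get, for every $x\in\X$,
\[
\R(x^{k+1}) + \langle \A x^{k+1}, y^k\rangle_{\Y} - \tfrac{\rho}{2}\|x - x^{k+1}\|_{\X}^2 \leqslant \R(x) + \langle \A x, y^k\rangle_{\Y} + \tfrac1{2\tau}\|x - x^k\|_{\X}^2 - \tfrac1{2\tau}\|x-x^{k+1}\|_{\X}^2 - \tfrac1{2\tau}\|x^{k+1}-x^k\|_{\X}^2.
\]
Symmetrically, for the $y$-update, $\mu$-strong convexity of $F^*$ gives the analogous inequality in $y$ with a $-\tfrac\mu2\|y-y^{k+1}\|_{\Y}^2$ term on the right and the extrapolated point $x_\vartheta^{k+1}$. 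Adding the two, the cross terms recombine into $\langle \A(x^{k+1}-x), y^{k+1}-y\rangle$-type quantities plus the extrapolation correction $\vartheta\langle \A(x^{k+1}-x^k), y^{k+1}-y\rangle$, which with $\vartheta=1$ telescopes nicely using the $\|\cdot\|_M$ structure (this is exactly the Chambolle--Pock telescoping, now carrying the extra $-\tfrac\rho2\|x-x^{k+1}\|^2$ and $-\tfrac\mu2\|y-y^{k+1}\|^2$ defects). The result is a per-iteration bound of the shape
\[
L(x^{k+1}, y) - L(x, y^{k+1}) \leqslant \tfrac12\big(\|z-z^k\|_M^2 - \|z-z^{k+1}\|_M^2\big) + \tfrac{\rho}{2}\|x - x^{k+1}\|_{\X}^2 - \tfrac{\mu}{2}\|y-y^{k+1}\|_{\Y}^2 + E_k,
\]
where $E_k$ is an error term built from $\|z^{k+1}-z^k\|_M^2$ and cross terms $\langle \A(x^k - x^{k+1}), y^{k+1}-y^k\rangle$, all controlled by $\|z^{k+1}-z^k\|_M^2$ up to constants since $\tau\sigma\|\A\|^2<1$.

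Next I would sum from $k=1$ to $K$ and divide by $K$. The telescoping $\|z-z^k\|_M^2$ terms give $\tfrac1{2K}\|z-z^1\|_M^2 = \mathcal O(1/K)$. By joint convexity of $L(\cdot,y)$ in $x$ minus the quadratic, and concavity of $L(x,\cdot)$, Jensen's inequality converts the averaged left side into $L(\bar x^k, y) - L(x, \bar y^k)$ minus the averaged quadratic defect terms $\tfrac1{2k}\sum_{i}(\rho\|x-x^i\|^2 - \mu\|y - y^i\|^2)$. Here is where the convergence $x^i\to\hat x$, $y^i\to\hat y$ enters: I would write $\|x - x^i\|^2 = \|x - \hat x\|^2 + 2\langle x - \hat x, \hat x - x^i\rangle + \|\hat x - x^i\|^2$, so the Cesàro average of the defect equals $\rho\|x-\hat x\|^2 - \mu\|y-\hat y\|^2$ plus cross terms bounded by $\tfrac{C}{k}\sum_{i=1}^k\|z^i - \hat z\|_M$, which is $\mathcal O(\tfrac{\log k}{k})$ by partial summation against the rates in \Cref{thm:KLconvergence} (geometric or polynomial decay both give a summable-or-log tail; the $\log k/k$ is the worst case, arising e.g. when the average of $\|z^i-\hat z\|_M \sim 1/i$ behaves like $\log k / k$). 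Likewise $\tfrac1K\sum_{k} E_k = \tfrac1K\sum_k \mathcal O(\|z^{k+1}-z^k\|_M^2) = \mathcal O(1/K)$ by square-summability (indeed $\ell^1$-summability) of the increments. Collecting: the $\mathcal O(1/k)$ and $\mathcal O(\log k/k)$ contributions merge into $\mathcal O(\log k /k)$, leaving exactly the claimed $\tfrac12(\rho\|x-\hat x\|_{\X}^2 - \mu\|y - \hat y\|_{\Y}^2) + \mathcal O(\log k/k)$.

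The main obstacle I anticipate is bookkeeping the defect terms: the $-\tfrac\rho2\|x-x^{k+1}\|^2$ contributions do not telescope and must be handled by re-centering at $\hat z$ and then absorbing the resulting cross terms $\langle x - \hat x, \hat x - x^i\rangle$ via the K{\L}-driven summability — getting precisely a $\log k / k$ (rather than, say, $1/\sqrt k$) requires using $\sum_i \|z^i - \hat z\|_M < \infty$ together with the elementary fact that $\tfrac1k\sum_{i=1}^k a_i = \mathcal O(\log k/k)$ when $\sum a_i<\infty$ and $a_i$ is, in the worst polynomial-K{\L} case, $\Theta(1/i)$. A secondary subtlety is that the PDHGM $y$-update uses the extrapolate $x_\vartheta^{k+1}$, so the cross term $\langle \A(x^{k+1}-x^k), y^{k+1}-y\rangle$ must be split as $\langle \A(x^{k+1}-x^k), y^{k+1}-y^k\rangle + \langle \A(x^{k+1}-x^k), y^k - y\rangle$, the first bounded by increments (using $\tau\sigma\|\A\|^2<1$) and the second telescoped into the $\|\cdot\|_M$ terms — this is the standard Chambolle--Pock manoeuvre and should go through verbatim modulo the weak-convexity defects.
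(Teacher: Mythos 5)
Your overall architecture is the same as the paper's: a per-iteration primal--dual gap inequality with weak-convexity/strong-concavity defects (this is exactly \cref{eq:k_gap} in the paper, obtained there from $M(z^k-z^{k+1})\in T(z^{k+1})$ rather than by redoing the Chambolle--Pock telescoping, but the outcome is identical), followed by Ces\`aro averaging with Jensen defects (the paper's \Cref{lem:avgconvex,thm:ergodic}), re-centering at $\hat z$, and K{\L}-rate control of the residuals. However, there is a genuine gap at the re-centering step. You expand $\|x-x^i\|_{\X}^2=\|x-\hat x\|_{\X}^2+2\langle x-\hat x,\hat x-x^i\rangle+\|\hat x-x^i\|_{\X}^2$ and claim the resulting cross terms are bounded by $\frac{C}{k}\sum_{i=1}^k\|z^i-\hat z\|_M=\mathcal{O}(\log k/k)$. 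This does not follow from \Cref{thm:KLconvergence}. First, $\sum_k\|z^{k+1}-z^k\|_M<\infty$ does \emph{not} imply $\sum_i\|z^i-\hat z\|_M<\infty$ (take $\|z^i-\hat z\|_M\sim i^{-\beta}$ with $0<\beta<1$: the increments $\sim i^{-\beta-1}$ are summable but the distances are not). Second, in the polynomial K{\L} regime $\theta\in(\frac12,1)$ the theorem only gives $\|z^i-\hat z\|_M\lesssim i^{-\beta}$ with $\beta=\frac{1-\theta}{2\theta-1}$, and for $\theta>\frac23$ one has $\beta<1$, so $\frac1k\sum_{i=1}^k\|z^i-\hat z\|_M=\mathcal{O}(k^{-\beta})$, which is strictly worse than $\log k/k$ and would destroy the claimed rate. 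The ``worst case $1/i$'' you invoke is not the worst case permitted by the hypotheses.

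The paper avoids producing these first-order cross terms altogether. It first uses the stationarity of $\hat z$ together with the $\rho$-weak convexity of $L(\cdot,y)$ and $\mu$-strong concavity of $L(x,\cdot)$ to write
\begin{equation*}
L(\bar x^k,y)-L(x,\bar y^k)\leqslant \tfrac{\rho}{2}\|x-\hat x\|_{\X}^2-\tfrac{\mu}{2}\|y-\hat y\|_{\Y}^2+L(\bar x^k,\hat y)-L(\hat x,\bar y^k),
\end{equation*}
and only then applies the ergodic bound of \Cref{thm:ergodic} at the single point $z=\hat z$. Consequently every residual is a \emph{squared} distance $\|z^i-\hat z\|_D^2$, which decays twice as fast as $\|z^i-\hat z\|_D$, and the three K{\L} regimes are then handled separately (geometric sums, a zeta-function bound, and finite termination) to extract $\mathcal{O}(1/k)$ and $\mathcal{O}(\log k/k)$ from the single and double sums respectively. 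If you want to salvage your route, you must either perform this reduction to $z=\hat z$ before averaging, or restrict to the regime where $\|z^i-\hat z\|_M$ is genuinely summable; as written, the $\mathcal{O}(\log k/k)$ claim for the linear cross term is unsupported. Your treatment of the increment-driven error $E_k$ via square-summability is fine and consistent with the paper.
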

\begin{proof}
    Proven in \Cref{ap:ergodic}.
\end{proof}
\begin{nb}
   In the case of $\rho=\mu=0$ we recover the usual convex ergodic rates \cite{lu2023unified}.
    Unlike the convex case, here there is a non-$k$ dependent term, which does not tend to zero. This arises due to non-convexity, and hence non-uniqueness of critical points of the original problem.
\end{nb}
\begin{nb}
   We have here only shown iterate convergence in the case of $\R$ $\rho$-weak convex and $F^*$ $\mu$-strong convex, but the proof of this result naturally extends to the case of $F^*$ being weak convex, as long as convergence can be shown. 
\end{nb}

\section{Learning a weakly convex regulariser}
Input weakly convex neural networks (IWCNNs) were introduced in \cite{shumaylov2023provably} with the following idea. As discussed in \Cref{sec:intro11}, non-convex regularisers have been observed to result in better performance than convex regularisers, and `optimal' regularisers are typically non-convex yet often weakly convex. We therefore want a neural network which is weakly convex but not convex. A smooth neural network achieves this, but these are usually undesirable \cite{krizhevsky2017imagenet}. But a neural network with all \texttt{ReLU} activations is weakly convex if and only if it is convex \cite{shumaylov2023provably}. To get the best of both worlds, the IWCNN architecture uses the following result: 
if  $f = g_c \circ g_{sm}$ for $g_c: \mathbb{R}^n \rightarrow \mathbb{R}$ convex and $L$-Lipschitz and $g_{sm}: \mathbb{R}^k \rightarrow \mathbb{R}^n$ $C^1$ with $\beta$-Lipschitz gradient, then $f$ is $L \beta$-weakly convex \cite{davis2018subgradient}.
\cite{shumaylov2023provably} utilises an input convex neural network (ICNN) \cite{ICNN} to define an IWCNN.
\begin{definition}[IWCNN; Definition 4.1 in \cite{shumaylov2023provably}]\label{def:IWCNN}
An IWCNN $f^\textrm{IWCNN}_\theta$ is defined by 
\[
f^\textrm{IWCNN}_\theta :=g^\textrm{ICNN}_{\theta_1}\circ g^\textrm{sm}_{\theta_2},\] where $g^\textrm{ICNN}_{\theta_1}$ is an input convex neural network and $g^\textrm{sm}_{\theta_2}$ is a neural network with Lipschitz smooth activations. %
\end{definition}

We now prove that IWCNNs can universally approximate continuous functions. We will use the fact (see \cite{sun2019least} Theorem 6) that the weakly convex functions (on an open or closed convex domain) are dense in the continuous functions (on that domain) with respect to $\|\cdot\|_\infty$. 
Therefore, as long as IWCNNs can approximate any weakly convex function, they can approximate any continuous function.

\begin{theorem}[Universal Approximation of IWCNN]\label{thm:universal_IWCNN}
    Consider an IWCNN as in \Cref{def:IWCNN}, with intermediate dimension $m\in\mathbb{N}$. For any $m$, IWCNNs can uniformly approximate any continuous function on a compact domain.
\end{theorem}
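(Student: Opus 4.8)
The plan is to reduce universal approximation of IWCNNs to two known density facts, chained together through the IWCNN structure $f = g^{\textrm{ICNN}} \circ g^{\textrm{sm}}$. First I would fix a continuous target $h$ on a compact convex domain $K \subset \mathbb{R}^k$ (one may enlarge $K$ to its convex hull without loss, since we only need approximation on the original compact set) and a tolerance $\varepsilon > 0$. By the density of weakly convex functions in $C(K)$ with respect to $\|\cdot\|_\infty$ (Theorem 6 of \cite{sun2019least}, cited above), pick a $\gamma$-weakly convex $w: K \to \mathbb{R}$ with $\|h - w\|_\infty < \varepsilon/3$. So it suffices to approximate an arbitrary weakly convex $w$ to within $2\varepsilon/3$ by an IWCNN.

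The second step is the representation trick underlying the IWCNN architecture: a $\gamma$-weakly convex $w$ can be written as a convex function composed with a smooth (in fact affine-plus-quadratic) map. Concretely, $w(x) + \tfrac12\gamma\|x\|^2$ is convex on $K$, and $x \mapsto x$ together with $x \mapsto \|x\|^2$ can be packaged so that $w = c \circ s$ where $s: \mathbb{R}^k \to \mathbb{R}^{k+1}$, $s(x) = (x, \|x\|^2)$, is $C^1$ with Lipschitz gradient on the bounded set $K$, and $c: \mathbb{R}^{k+1} \to \mathbb{R}$, $c(u, t) = \tilde w(u) - \tfrac12 \gamma t$ is convex in its arguments, where $\tilde w$ is a convex extension of $x \mapsto w(x) + \tfrac12\gamma\|x\|^2$ to a neighbourhood (using that a convex function on a compact convex set extends convexly, or simply restricting attention to the relevant compact image $s(K)$). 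Thus the exact weakly convex function $w$ already has the shape of an IWCNN with intermediate dimension $m = k+1$; what remains is to replace $c$ and $s$ by an ICNN and a Lipschitz-smooth network respectively, controlling the error.

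The third step handles the two network approximations. The inner map $s$ is a fixed smooth function on the compact set $K$, so by the universal approximation theorem for networks with Lipschitz-smooth activations (e.g. sigmoidal or \texttt{softplus} activations), there is a smooth network $g^{\textrm{sm}}_{\theta_2}$ with $\|g^{\textrm{sm}}_{\theta_2} - s\|_{\infty, K}$ as small as desired; since $s(K)$ is compact we may also arrange that $g^{\textrm{sm}}_{\theta_2}(K)$ stays in a fixed compact neighbourhood of $s(K)$. The outer map $c$ is convex on that compact neighbourhood, so by universal approximation of ICNNs for convex functions on compact convex sets (as established in \cite{ICNN}, and relied on in \cite{shumaylov2023provably}), there is an ICNN $g^{\textrm{ICNN}}_{\theta_1}$ approximating $c$ uniformly there; moreover an ICNN with \texttt{ReLU}-type activations is Lipschitz on bounded sets, with a known Lipschitz constant $L$. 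Then
\[
\|g^{\textrm{ICNN}}_{\theta_1}\circ g^{\textrm{sm}}_{\theta_2} - w\|_{\infty,K} \leqslant \|g^{\textrm{ICNN}}_{\theta_1}\circ g^{\textrm{sm}}_{\theta_2} - c \circ g^{\textrm{sm}}_{\theta_2}\|_{\infty,K} + \|c \circ g^{\textrm{sm}}_{\theta_2} - c \circ s\|_{\infty,K} \leqslant \|g^{\textrm{ICNN}}_{\theta_1} - c\|_\infty + L\,\|g^{\textrm{sm}}_{\theta_2} - s\|_{\infty,K},
\]
and choosing the two network errors below $\varepsilon/3$ and $\varepsilon/(3L)$ respectively gives total error $< \varepsilon$ by the triangle inequality, completing the argument.

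The main obstacle I anticipate is the bookkeeping around domains and Lipschitz constants in the composition: one must ensure the ICNN is applied only on a compact region where it has a controlled Lipschitz constant $L$, and that $g^{\textrm{sm}}_{\theta_2}$ maps $K$ into that region — this requires choosing the smooth-network error first (to pin down the region), then the ICNN error, in the right order. A secondary technical point is citing a clean statement of ICNN universal approximation on a compact convex set with the needed Lipschitz control; if the literature statement is only for the unit cube or only gives $L^\infty$ density without Lipschitz bounds, one patches this by noting \texttt{ReLU} ICNNs are piecewise affine hence Lipschitz with constant bounded by the product of layer operator norms, which are finite for any fixed realising network. Everything else is routine.
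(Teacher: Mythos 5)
Your argument is essentially sound in its main thrust, but it takes a genuinely different route from the paper and, as written, does not deliver the full claim. The paper also begins with density of weakly convex functions in $C(K)$, but then observes that the weakly convex approximant can be taken to be a Moreau envelope, which is $C^1$ with Lipschitz gradient; it therefore puts \emph{all} of the content into the smooth inner network (taking $g^{\textrm{sm}} = (f_t,0,\dots,0)^\top$) and lets the outer ICNN approximate only the first-coordinate projection, which is linear, convex and $1$-Lipschitz. You instead put all of the content into the convex outer network via the lift $s(x)=(x,\|x\|^2)$ and $c(u,t)=\tilde w(u)-\tfrac12\gamma t$. This is a correct dual decomposition: the joint convexity of $c$, the identity $c\circ s=w$ on $K$, and your ordering of the two approximations (inner first, to pin down the compact region on which the ICNN must work, then outer) all check out. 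What your route buys is that it needs no smoothness of the weakly convex approximant itself, since the inner map is an explicit fixed quadratic; what it costs is the intermediate dimension.

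Concretely, the theorem asserts approximation for \emph{any} intermediate dimension $m\in\mathbb{N}$, including $m=1$, whereas your construction requires $m\geqslant k+1$ for input dimension $k$, since the inner network must realise (approximately) the lift $x\mapsto(x,\|x\|^2)$. This is the one genuine gap relative to the stated theorem; it is fixable by switching to the paper's decomposition, in which the scalar-valued smooth approximant itself plays the role of $g^{\textrm{sm}}$ and the convex layer is a coordinate projection, so any $m\geqslant 1$ suffices. Two smaller points: the constant $L$ in your displayed estimate must be a Lipschitz constant of $c$ on the compact neighbourhood of $s(K)$ (available since $c$ is convex and finite on a neighbourhood, hence locally Lipschitz there --- and this same Lipschitzness is exactly what the ICNN approximation theorem requires), not a Lipschitz constant of the ICNN as your prose suggests; and the convex extension $\tilde w$ of $w+\tfrac12\gamma\|\cdot\|^2$ beyond $K$ is most cleanly obtained as the supremum of its affine minorants on $K$.
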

\begin{proof}\vspace{-1em}
    Proof provided in \Cref{ap:universal_IWCNN}.
\end{proof}
\subsection{Adversarial regularisation}
\begin{figure*}[t]
  \centering
  \begin{minipage}[t]{0.2\linewidth}
  \centering
  \vspace{0pt}
  \begin{tikzpicture}[spy using outlines={circle,red,magnification=3.0,size=1.0cm, connect spies}]   
    \node {\includegraphics[width=\linewidth]{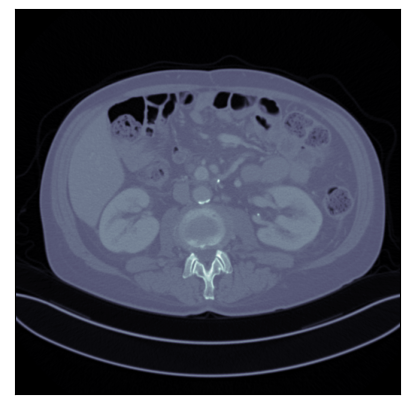}};
    \spy on (0.035,-0.65) in node [left] at (1.7,1.25);
  \end{tikzpicture}
  \vskip-0.5\baselineskip
  {\scriptsize Ground-truth}
  \end{minipage}\hfill
  \begin{minipage}[t]{0.2\linewidth}%
  \centering
  \vspace{0pt}
  \begin{tikzpicture}[spy using outlines={circle,red,magnification=3.0,size=1.0cm, connect spies}]   
    \node {\includegraphics[width=\linewidth]{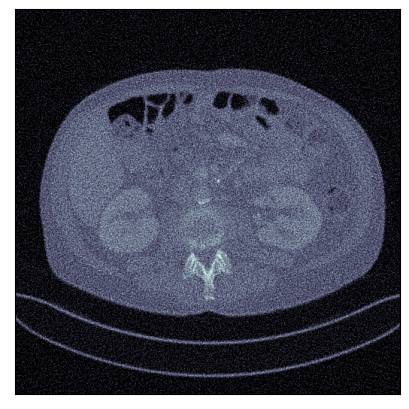}};
    \spy on (0.035,-0.65) in node [left] at (1.7,1.25);  
  \end{tikzpicture}
  \vskip-0.5\baselineskip
  {\scriptsize FBP: 21.303 dB, 0.195}
  \end{minipage}\hfill
  \begin{minipage}[t]{0.2\linewidth}%
  \centering
  \vspace{0pt}
  \begin{tikzpicture}[spy using outlines={circle,red,magnification=3.0,size=1.0cm, connect spies}]   
    \node {\includegraphics[width=\linewidth]{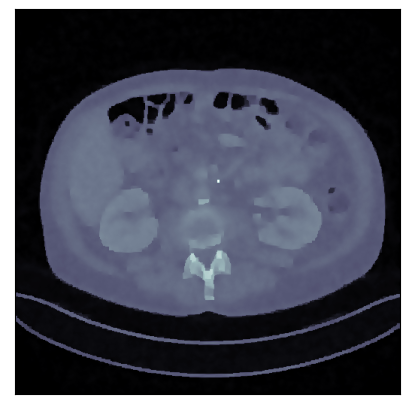}};
    \spy on (0.035,-0.65) in node [left] at (1.7,1.25);
  \end{tikzpicture}
  \vskip-0.5\baselineskip
  {\scriptsize TV: 31.690 dB, 0.889}
  \end{minipage}\hfill
  \begin{minipage}[t]{0.2\linewidth}%
  \centering
  \vspace{0pt}
  \begin{tikzpicture}[spy using outlines={circle,red,magnification=3.0,size=1.0cm, connect spies}]   
    \node {\includegraphics[width=\linewidth]{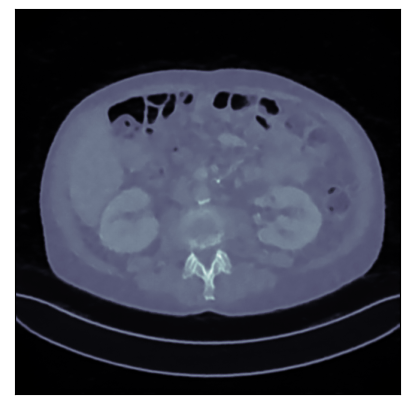}};
 \spy on (0.035,-0.65) in node [left] at (1.7,1.25); 
  \end{tikzpicture}
  \vskip-0.5\baselineskip
  {\scriptsize U-Net: 36.712 dB, 0.920}
  \end{minipage}\hfill
  \begin{minipage}[t]{0.2\linewidth}%
  \centering
  \vspace{0pt}
  \begin{tikzpicture}[spy using outlines={circle,red,magnification=3.0,size=1.0cm, connect spies}]   
    \node {\includegraphics[width=\linewidth]{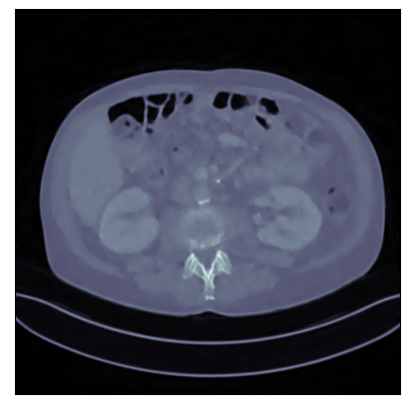}};
    \spy on (0.035,-0.65) in node [left] at (1.7,1.25); 
  \end{tikzpicture}
  \vskip-0.5\baselineskip
  {\scriptsize LPD: 36.810 dB, 0.912}
  \end{minipage}\hfill
  \begin{minipage}[t]{0.2\linewidth}%
  \centering
  \vspace{0pt}
  \begin{tikzpicture}[spy using outlines={circle,red,magnification=3.0,size=1.0cm, connect spies}]   
    \node {\includegraphics[width=\linewidth]{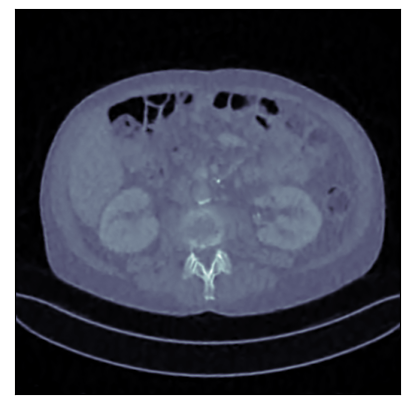}};
    \spy on (0.035,-0.65) in node [left] at (1.7,1.25); 
  \end{tikzpicture}
  \vskip-0.5\baselineskip
  {\scriptsize AR: 36.694 dB, 0.907}
  \end{minipage}\hfill
  \begin{minipage}[t]{0.2\linewidth}%
  \centering
  \vspace{0pt}
  \begin{tikzpicture}[spy using outlines={circle,red,magnification=3.0,size=1.0cm, connect spies}]   
    \node {\includegraphics[width=\linewidth]{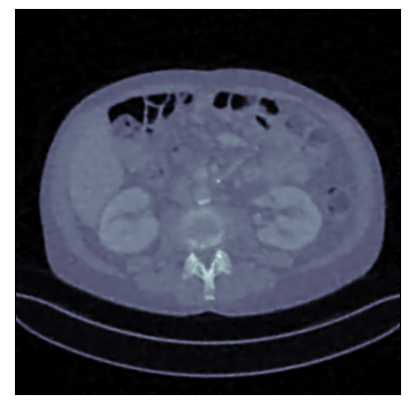}};
    \spy on (0.035,-0.65) in node [left] at (1.7,1.25); 
  \end{tikzpicture}
  \vskip-0.5\baselineskip
  {\scriptsize ACR: 35.708 dB, 0.897}
  \end{minipage}\hfill
   \begin{minipage}[t]{0.2\linewidth}%
  \centering
  \vspace{0pt}
  \begin{tikzpicture}[spy using outlines={circle,red,magnification=3.0,size=1.0cm, connect spies}]   
    \node {\includegraphics[width=\linewidth]{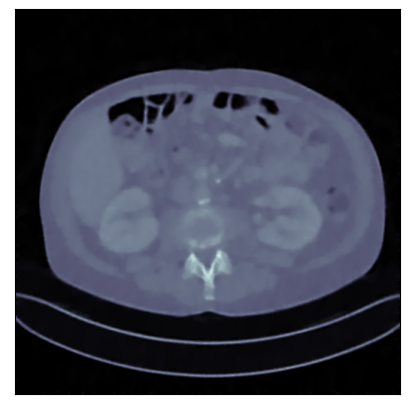}};
    \spy on (0.035,-0.65) in node [left] at (1.7,1.25); 
  \end{tikzpicture}
  \vskip-0.5\baselineskip
  {\scriptsize ACNCR: 36.533 dB, 0.921}
  \end{minipage}\hfill
   \begin{minipage}[t]{0.2\linewidth}%
  \centering
  \vspace{0pt}
  \begin{tikzpicture}[spy using outlines={circle,red,magnification=3.0,size=1.0cm, connect spies}]   
    \node {\includegraphics[width=\linewidth]{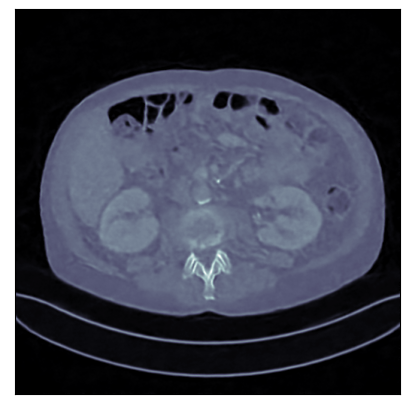}};
    \spy on (0.035,-0.65) in node [left] at (1.7,1.25);  
  \end{tikzpicture}
  \vskip-0.5\baselineskip
  {\scriptsize AWCR: 37.603 dB, 0.918}
  \end{minipage}\hfill
   \begin{minipage}[t]{0.2\linewidth}%
  \centering
  \vspace{0pt}
  \begin{tikzpicture}[spy using outlines={circle,red,magnification=3.0,size=1.0cm, connect spies}]   
    \node {\includegraphics[width=\linewidth]{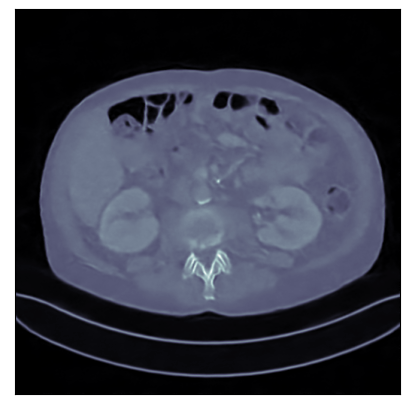}};
    \spy on (0.035,-0.65) in node [left] at (1.7,1.25);
  \end{tikzpicture}
  \vskip-0.5\baselineskip
  {\scriptsize AWCR-PD: 37.941 dB, 0.924}
  \end{minipage}\hfill
  \caption{\small{Reconstructed images obtained using different methods, along with the associated PSNR and SSIM, for sparse view CT. In this case the AWCR and AWCR-PD achieve the highest PSNR and SSIM. Furthermore, both AWCR methods retain the fine-structure in the reconstruction, unlike the ACNCR and ACR, the only other methods which possess convergence guarantees.} \label{fig:visual}}
\end{figure*}

 Adversarial learning operates within the weakly supervised setting described in \Cref{sec:intro}. 
We have datasets $(x_i) \in \X$ and $(y_j) \in \Y$ each i.i.d. sampled from the distributions of ground truth images $\mathbb{P}_r$ and measurements $\mathbb{P}_Y$, respectively. To get these two distributions into the same space, we push-forward $\mathbb{P}_Y$ from the measurement space $\Y$ to the image space $\X$ using some pseudo-inverse $\A^\dagger$ of the forward model, giving $\mathbb{P}_n:=(\A^{\dagger})_{\#} \mathbb{P}_Y$, a distribution of images with reconstruction artefacts. 

The key idea is that regularisation is really a type of classification problem. We want $\R$ to be small on the ground truth images (i.e., $\mathbb{P}_r$) and large on artificial images (i.e., $\mathbb{P}_n$). Therefore, \cite{AR} learned a neural network regulariser $\R_\theta$ by minimising the following loss functional:
\begin{equation}
    \label{eq:AR_loss}
        \mathbb{E}_{X \sim \mathbb{P}_r}\left[\R_{\theta}(X)\right]-\mathbb{E}_{X \sim \mathbb{P}_n}\left[\R_{\theta}(X)\right]+\lambda \cdot \mathbb{E}\left[\left(\left\|\partial_x \R_{\theta}(X)\right\|-1\right)_{+}^2\right].
\end{equation}
The final term pushes $\R_\theta$ to be 1-Lipschitz, inspired by the Wasserstein GAN (WGAN) loss \cite{arjovsky2017wasserstein}, and the expectation is taken over all lines connecting samples in $\mathbb{P}_n$ and $\mathbb{P}_r$. 
This $\R_{\theta}$ has a key interpretation, given two assumptions. First, assume that the measure $\mathbb{P}_r$ is supported on the weakly compact set $\mathcal{M}$. %
This captures the intuition that real image data lies in a lower-dimensional non-linear subspace of the original space. 
Let  $P_{\mathcal{M}}$ be the projection function onto $\mathcal{M}$, assumed to be defined $\mathbb{P}_n$-a.e. Then second, assume that the measures $\mathbb{P}_r$ and $\mathbb{P}_n$ satisfy $
\left(P_{\mathcal{M}}\right)_{\#}\left(\mathbb{P}_n\right)=\mathbb{P}_r
$. %
This essentially means that the reconstruction %
artefacts are small enough to allow recovery of the real distribution by simply projecting the noisy distribution onto $\mathcal{M}$. Of the two assumptions, the latter is stronger.
\begin{theorem}[Optimal AR \cite{AR}]\label{thm:optimal_AR}
Under these assumptions, the distance function to $\mathcal{M}$, $
d_{\mathcal{M}}(x):=\min _{z \in \mathcal{M}}\|x-z\|_{\X}
$ is a maximiser of 
\begin{equation*}
\label{eq:Wass}
\sup _{f \in \text{1-Lip}} \mathbb{E}_{X \sim \mathbb{P}_n}[f(X)]-\mathbb{E}_{X \sim \mathbb{P}_r}[f(X)].
\end{equation*}
\end{theorem}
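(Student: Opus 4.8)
The plan is to recognise the supremum as (the Kantorovich--Rubinstein dual of) the Wasserstein-1 distance $W_1(\mathbb{P}_n,\mathbb{P}_r)$, to exhibit $d_{\mathcal{M}}$ as a feasible competitor whose objective value is $\mathbb{E}_{X\sim\mathbb{P}_n}[d_{\mathcal{M}}(X)]$, and then to bound the supremum from above by the same quantity using the explicit transport plan induced by $P_{\mathcal{M}}$. Only the elementary half of Kantorovich--Rubinstein is needed, so the argument is insensitive to the dimension of $\X$.

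First I would check feasibility of $d_{\mathcal{M}}$. It is a standard fact that $x\mapsto\min_{z\in\mathcal{M}}\|x-z\|_{\X}$ is $1$-Lipschitz (pick a near-minimiser $z$ for $x$, apply the triangle inequality to compare with $x'$, and symmetrise); weak compactness of $\mathcal{M}$ together with continuity of $x\mapsto\|x-z\|_{\X}$ guarantees it is well defined and finite. Moreover $d_{\mathcal{M}}\equiv 0$ on $\mathcal{M}$, and $\mathbb{P}_r(\mathcal{M})=1$ by the support assumption, so $\mathbb{E}_{X\sim\mathbb{P}_r}[d_{\mathcal{M}}(X)]=0$. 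Hence the objective value at $f=d_{\mathcal{M}}$ equals $\mathbb{E}_{X\sim\mathbb{P}_n}[d_{\mathcal{M}}(X)]$, and in particular the supremum is at least $\mathbb{E}_{X\sim\mathbb{P}_n}[d_{\mathcal{M}}(X)]$.

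Next I would produce the matching upper bound. Using the projection $P_{\mathcal{M}}$, defined $\mathbb{P}_n$-a.e., form the pushforward coupling $\pi := (\mathrm{id}, P_{\mathcal{M}})_{\#}\mathbb{P}_n$ on $\X\times\X$. Its first marginal is $\mathbb{P}_n$ by construction and its second marginal is $(P_{\mathcal{M}})_{\#}\mathbb{P}_n=\mathbb{P}_r$ by the second assumption, so $\pi$ is a genuine transport plan between $\mathbb{P}_n$ and $\mathbb{P}_r$. Then for any $1$-Lipschitz $f$,
\begin{align*}
\mathbb{E}_{X\sim\mathbb{P}_n}[f(X)]-\mathbb{E}_{X\sim\mathbb{P}_r}[f(X)] &= \mathbb{E}_{(X,Z)\sim\pi}[f(X)-f(Z)] \leqslant \mathbb{E}_{(X,Z)\sim\pi}\|X-Z\|_{\X}\\
&= \mathbb{E}_{X\sim\mathbb{P}_n}\big[\|X-P_{\mathcal{M}}(X)\|_{\X}\big] = \mathbb{E}_{X\sim\mathbb{P}_n}[d_{\mathcal{M}}(X)],
\end{align*}
using $\|X-P_{\mathcal{M}}(X)\|_{\X}=d_{\mathcal{M}}(X)$ for $\mathbb{P}_n$-a.e.\ $X$. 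Taking the supremum over $f$ gives the reverse inequality, so the supremum equals $\mathbb{E}_{X\sim\mathbb{P}_n}[d_{\mathcal{M}}(X)]$ and is attained at $f=d_{\mathcal{M}}$, which is the claim.

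The main obstacle is bookkeeping rather than depth: one must ensure $P_{\mathcal{M}}$ is Borel measurable and $\mathbb{P}_n$-a.e.\ defined (so that $\pi$ is a bona fide Borel probability measure with the stated marginals), and that $d_{\mathcal{M}}$ is $\mathbb{P}_n$-integrable — the latter follows from boundedness of $\mathcal{M}$ (by weak compactness) together with a mild moment hypothesis on $\mathbb{P}_n$, or is implicit in the objective being finite. No appeal to the full Kantorovich duality theorem is required, only the one-line inequality above, which is what keeps the proof clean in the Banach-space setting.
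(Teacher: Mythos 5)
Your proof is correct and is essentially the standard argument for this result (which the paper imports from \cite{AR} without reproducing a proof): lower-bound the supremum by evaluating at the $1$-Lipschitz feasible point $d_{\mathcal{M}}$, which vanishes $\mathbb{P}_r$-a.s., and upper-bound it via the coupling $(\mathrm{id},P_{\mathcal{M}})_{\#}\mathbb{P}_n$, whose cost is exactly $\mathbb{E}_{X\sim\mathbb{P}_n}[d_{\mathcal{M}}(X)]$. The measurability and integrability caveats you flag are exactly the right ones and are covered by the paper's standing assumptions on $P_{\mathcal{M}}$ and $\mathcal{M}$.
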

\begin{nb}[Non-uniqueness]
The functional in \Cref{thm:optimal_AR} does not necessarily have a unique maximiser. 
For a more in-depth discussion of when potentials and transports {can} be unique, see \cite{staudt2022uniqueness,milne2022new}.
\end{nb}

In \cite{ACR} this approach was modified by parameterising  $\mathcal{R}_{\theta}(x)=\mathcal{R}^\textrm{ICNN}_\theta(x)+\frac{\mu_0}{2}\|x\|_{\X}^2$, where $\mathcal{R}^\textrm{ICNN}_\theta$ is an ICNN, and then minimising \cref{eq:AR_loss} to learn an adversarial convex regulariser (ACR). In order to retain the interpretation of \Cref{thm:optimal_AR}, we would have to assume that $d_\mathcal{M}$ is convex (so that it can be approximated by an ICNN), which is true if and only if $\mathcal{M}$ is a convex set. However, real data rarely lives on a convex set.  This breakdown can lead to performance issues, as illustrated on \Cref{fig:toy_example}, where an ICNN completely fails to approximate the distance function.

\subsection{Adversarial weakly convex and convex-nonconvex regularisation }

In \cite{shumaylov2023provably}, an adversarial convex-nonconvex regulariser (ACNCR) was defined by $\R_\theta(\x) := \R_{\theta_1}^{c}(\x) + \R_{\theta_2}^{wc}(\A x)$, for $\R_{\theta_1}^{c}$ parameterised the same as the ACR, and $\R_{\theta_2}^{wc}$ parameterised with an IWCNN. Then $\R_{\theta_1}^{c}$ and $\R_{\theta_2}^{wc}$ are trained in a decoupled way, with $\R_{\theta_1}^{c}$ minimising \cref{eq:AR_loss} and $\R_{\theta_2}^{wc}$ minimising a similar loss but with $\mathbb{P}_n$ replaced by $\mathbb{P}_Y$ and $\mathbb{P}_r$ replaced by
$\mathbb{P}_{Y_r} :=\left(\A\right)_{\#} \mathbb{P}_r$, the push-forward of the ground truth distribution under the forward model. This was an important step towards greater interpretability, since, by \Cref{thm:optimal_AR}, the optimal $\R_{\theta_2}^{wc}(y) \approx d_{\A[\mathcal{M}]}(y)$, but it is still unclear how to interpret $\R_{\theta_1}^{c}$. 
In this work, we define a more directly weak convex parameterisation, following the ACR.
\begin{definition}[Adversarial weak convex regulariser]\label{def:AWCR}

The \textit{adversarial weak convex regulariser} (AWCR) is parameterised by $\R_\theta(x) =\mathcal{R}^\textrm{IWCNN}_\theta(x)+\frac{\mu_0}{2}\|x\|_{\X}^2$, where $\mathcal{R}^\textrm{IWCNN}_\theta$ is an IWCNN. It is learned by minimising \cref{eq:AR_loss}. This satisfies \Cref{assumptions} with the modification described in \Cref{nb:fd}, as an IWCNN is Lipschitz by construction, and so $\mu$ can be chosen to be arbitrarily small.
\end{definition}
\begin{nb}
\Cref{thm:universal_IWCNN} allows this AWCR to overcome the ACR's issue of ICNNs not being able to approximate a distance function to non-convex sets. An IWCNN is able to approximate the distance function to any compact set, since any such distance function is 1-Lipschitz continuous. 
\end{nb}
\section{Numerical results}\label{sec:results}

\subsection{Distance function approximations}
To illustrate the importance of proper neural network structure, we first discuss a toy example. 
We consider data lying on a product manifold $\mathcal{M}$ of two separate spirals embedded into $\mathbb{R}^2$ in the form of a double spiral, as shown in %
\Cref{fig:toy_example}. We then train an AR, an ACR (i.e., an ICNN), and an AWCR (i.e., an IWCNN) on  the denoising problem for this data, %
and compare the learned regularisers with the true $d_\mathcal{M}$. 
As is clear from \Cref{fig:toy_example}, the ICNN entirely fails to approximate this non-convex $d_{\mathcal{M}}$. This limitation is overcome by the IWCNN, which despite the imposed constraints can approximate the true function well, and even shows improved extrapolation. 
Despite the simplicity of this example, the overall hierarchy of methods persists in the experiments explored in \Cref{sec:CT} on higher dimensional real data.

\subsection{Computed Tomography (CT)}\label{sec:CT}
For evaluation of the proposed methodology, we consider two applications: CT reconstruction with (i) sparse-view and (ii) limited-angle projection. Supervised methods require access to large high-quality datasets for training, but outside of curated datasets, obtaining large amounts of high-quality paired data is unrealistic. For this reason weakly supervised methods, requiring access only to unpaired data, are of significant interest for this problem.

We consider two versions of the AWCR method, one using the subgradient method to solve \cref{eq:ful_invprob} as in \cite{AR,ACR,mukherjee2024data}, and one using PDHGM, denoted AWCR-PD. These are compared with: 
\begin{itemize}
\item two standard knowledge-driven techniques: %
filtered back-projection (FBP) and total variation (TV) regularisation, which act as the baseline; 
\item two supervised data-driven methods: the learned primal-dual (LPD) method \cite{lpd_tmi} and U-Net-based post-processing of FBP \cite{postprocessing_cnn}, considered to be state of the art methods in end-to-end learned reconstruction from the two main paradigms: algorithm unrolling and learned post-processing; 
\item three weakly supervised methods: the AR, ACR, and the ACNCR. Adversarial regularisation methods were chosen as the currently best performing, to the authors' knowledge, weakly supervised methods for CT.
\end{itemize}
These comparisons illustrate the trade-offs in levels of constraints and supervision versus stability and performance. For details of the experimental set-up, see \Cref{ap:expsetup}. We measure the performance in terms of the peak signal-to-noise ratio (PSNR) and the structural similarity index (SSIM) \cite{ssim_paper_2004}. We report average test dataset results in \Cref{sample-table}, with further visual examples in \Cref{fig:visual}.

\textbf{Sparse view CT}\quad As in \cite{AR} performance of AR during reconstruction begins to deteriorate if the network is over-trained, so early stopping must be employed in training. For the ACR, ACNCR, and both AWCR methods this does not occur due to reduced expressivity, yet both AWCR methods surpass the performance of AR. Furthermore, the AWCR methods perform qualitatively better than the AR, ACR, and ACNCR (especially the latter two) at reconstructing fine details, which in the medical setting can be more important than PSNR/SSIM accuracy. %
Indeed, the AWCR-PD method approaches the PSNR accuracy of the strongly supervised U-Net post-processing method. 

\textbf{Limited view CT}\quad In this setting a specific angular region contains no measurement, turning this into a severely ill-posed inverse problem, and a good image prior is crucial for reconstruction. As shown in \cite{ACR}, the AR begins introducing artifacts during reconstruction, which is overcome for both the ACNCR and ACR due to the imposed convexity. The AWCR, on the other hand, is able to remain non-convex without experiencing deterioration, and slightly outperforms both the ACR and ACNCR in PSNR, but not SSIM. However, the AWCR-PD method performs worse in this setting, though still outperforming AR. This occurs due to the forward and the adjoint operator being severely ill-posed. For a visual comparison of the AWCR and AWCR-PD methods in this setting, see \Cref{ap:datavis}.

\begin{table}
  \caption{Average test PSNR and SSIM in CT experiments.}
  \label{sample-table}
  \centering
  \resizebox{.5\columnwidth}{!}{%
  \begin{tabular}{l c c c c}
    \toprule
    & \multicolumn{2}{c}{Limited} & \multicolumn{2}{c}{Sparse}                   \\
    \cmidrule(r){2-3} \cmidrule(r){4-5}
    Methods  & PSNR (dB)       & SSIM        & PSNR (dB)       & SSIM        \\
    \midrule
    & \multicolumn{4}{c}{\textit{Knowledge-driven}}                  \\
    FBP & 17.1949  & 0.1852  & 21.0157  & 0.1877  \\
    TV & \textbf{25.6778}  & \textbf{0.7934}  & \textbf{31.7619}  & \textbf{0.8883}  \\
    \midrule
    & \multicolumn{4}{c}{\textit{Supervised}}                  \\
    LPD &  28.9480 & \textbf{0.8394}  & \textbf{37.4868} & 0.9217  \\
    U-Net &  \textbf{29.1103} & 0.8067  &  37.1075 &  \textbf{0.9265}  \\
    \midrule
    & \multicolumn{4}{c}{\textit{Weakly Supervised}}                  \\
    AR & 23.6475 & 0.6257  & 36.4079  & 0.9101  \\
    ACR &  26.4459 & \textbf{0.8184}  & 34.5844 & 0.8765  \\
    ACNCR & 26.5420 & 0.8161  & 35.6476  &  0.9094  \\
    AWCR &  \textbf{26.7233} & 0.8105  & 36.5323  &  0.9046 \\
    AWCR-PD & 24.3254  & 0.7650 &  \textbf{36.9993} & \textbf{0.9108} \\
    \bottomrule
  \end{tabular}}
\end{table}

\section{Conclusions}
In this work, we have shown that the framework of weak convexity is fertile ground for proving theoretical guarantees. We proved that weakly convex regularisation can yield existence, stability, and convergence in the sense of critical points. Furthermore, we filled a gap in the literature by proving convergence (with an ergodic rate) for the PDHGM optimisation method (with $\vartheta=1$) in the weakly convex setting. All of these results hold when $\R$ is a weakly convex neural network. We showed that the IWCNN architecture can universally approximate continuous functions, allowing the AWCR to retain interpretability as a distance function (empirically illustrated in \Cref{fig:toy_example}) whilst also benefiting from all of the provable guarantees from weak convexity. Finally, for the problem of CT reconstruction we showed that the AWCR can compete with or outperform the state-of-the-art in adversarial regularisation, and approach the lower bound performance of state-of-the-art supervised approaches for CT, despite being merely weakly supervised.

Although we have focused on adversarial regularisation in this work, we emphasise that other learned regularisation methods can be put into this weakly convex setting. For example, in \cite{Hurault23} it was shown that plug-and-play proximal gradient descent approaches (using sufficiently smooth gradient step denoisers) correspond to criticising an energy with a weakly convex regulariser. The total deep variation regulariser \cite{TDV2020}
\[
\R(x) := \sum_{i=1}^n \sum_{j=1}^\ell \mathcal{N}_{ij}(Kx) w_j
\]
is weakly convex so long as $w_j \geqslant 0$ for all $j$ and the $\mathcal{N}_{ij}$ are weakly convex for all $i,j$, which is achieved if the U-Net $\mathcal{N}:\mathbb{R}^d \to \mathbb{R}^{n\ell}$ has the form of an IWCNN, using a uniformly convex U-Net as in \cite{Bianchi_2023}. The NETT $\ell^q$ regulariser \cite{li2020nett} has the form:
\[
\R(x):= \sum_{\lambda \in \Lambda} \|\Phi_{\lambda,\theta}(x)\|^q_q, 
\] 
where $q \geqslant 1$ and $\Lambda$ is a finite set. 
For $q = 2$ and $|\Lambda|=1$, this is also the form of the learned noise reduction regulariser used in \cite{Liu2022}. A sufficient condition for this $\R$ to be  weakly convex is, for all $\lambda$, $\Phi_{\lambda,\theta}$ is at least one of: bounded and $C^1$ with Lipschitz gradient (by \cite{davis2018subgradient}), or weakly convex (e.g., an IWCNN) with $(\Phi_{\lambda,\theta}(x))_i\in [0,B_i] $ for all $x$ and $i$ (by \Cref{prop:compwkcvx}).

A number of open questions remain. Expressivity questions about IWCNNs beyond universal approximation, e.g. efficiency of representation, remain unanswered. %
It remains open whether the PDHGM algorithm can be shown to be convergent for a generic weak-weak splitting; or whether the boundedness assumption can be dropped in certain cases. Alternatively, given the structure of the regulariser, it may be beneficial to turn to prox-linear schemes instead, analysis of which is currently missing in the primal-dual setting. Also, the problem of extracting proximal operators directly from learned networks remains open. Finally, there is the broader question of comparing the effectiveness of different approaches for learned regularisation, especially (as noted by an anonymous reviewer) quantifying how this effectiveness depends on the ill-posedness of the inverse problem.
\section*{Acknowledgements}
JB is supported by start-up funds at the California Institute of Technology. CBS acknowledges support from the Philip Leverhulme Prize, the Royal Society Wolfson Fellowship, the EPSRC advanced career fellowship EP/V029428/1, the EPSRC programme grant EP/V026259/1, and the EPSRC grants EP/S026045/1 and EP/T003553/1, EP/N014588/1, EP/T017961/1, the Wellcome Innovator Awards 215733/Z/19/Z and 221633/Z/20/Z, the European Union Horizon 2020 research and innovation programme under the Marie Skodowska-Curie grant agreement No. 777826 NoMADS, the Cantab Capital Institute for the Mathematics of Information and the Alan Turing Institute.
This research was supported by the NIHR Cambridge Biomedical Research Centre (NIHR203312). The views expressed are those of the author(s) and not necessarily those of the NIHR or the Department of Health and Social Care.

\bibliography{sample}
\bibliographystyle{unsrt}

\newpage
\appendix
\onecolumn
\section{Weakly convex analysis review}
\label{ap:convanalysis}

For a locally Lipschitz $f: \X \to\mathbb{R}$, the following are equivalent to $\rho$-convexity \cite{ambrosio2005gradient}: %
\begin{enumerate}
    \item For all $ x_1, x_2 \in \X$ and $\lambda \in[0,1]$,
    \begin{align*}
    f\left(\lambda x_1+(1-\lambda) x_2\right) \leqslant \lambda &f\left(x_1\right)+(1-\lambda) f\left(x_2\right) -\frac{\rho\lambda(1-\lambda)}{2} \left\|x_1-x_2\right\|_{\X}^2 .   
    \end{align*}
    \item For any $x, \hat{x} \in \X$ and $\psi \in \partial f(x)$, we have the following inequality: %
    $f(\hat{x}) \geqslant f(x)+\psi(\hat{x}-x)+\frac{\rho}{2}\|\hat{x}-x\|_{\X}^2$.
\end{enumerate}

\begin{proposition} %
\label{prop:compwkcvx}
    Let $f:\X \to [0,B] $ be $\rho$-weakly convex and  $q\geqslant 1$. Then $f(x)^q$ is $qB^{q-1}\rho$-weakly convex.
\end{proposition}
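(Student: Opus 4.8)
The plan is to verify the second equivalent characterization of $\rho$-weak convexity from \Cref{ap:convanalysis}: namely, that for $g := f^q$, every $x, \hat x \in \X$ and every subgradient $\xi \in \partial g(x)$ satisfy
\[
g(\hat x) \geqslant g(x) + \xi(\hat x - x) - \tfrac{qB^{q-1}\rho}{2}\|\hat x - x\|_\X^2.
\]
The natural route is via the chain rule: since $t \mapsto t^q$ is $C^1$ and nondecreasing on $[0,B]$ and $f$ is locally Lipschitz with values in $[0,B]$, any $\xi \in \partial(f^q)(x)$ is of the form $\xi = q f(x)^{q-1}\psi$ for some $\psi \in \partial f(x)$ (for $f(x)>0$; the case $f(x)=0$ is handled separately, where $\partial g(x) = \{0\}$ since $g \geqslant 0 = g(x)$). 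So it suffices to show, for each $\psi \in \partial f(x)$,
\[
f(\hat x)^q \geqslant f(x)^q + q f(x)^{q-1}\psi(\hat x - x) - \tfrac{qB^{q-1}\rho}{2}\|\hat x - x\|_\X^2.
\]

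The key step is an elementary two-variable inequality. Write $a := f(\hat x) \in [0,B]$ and $b := f(x) \in [0,B]$. Convexity of $t\mapsto t^q$ gives $a^q \geqslant b^q + q b^{q-1}(a-b)$. Combine this with the weak-convexity inequality for $f$, $a = f(\hat x) \geqslant f(x) + \psi(\hat x - x) + \tfrac{\rho}{2}\|\hat x - x\|_\X^2 = b + \psi(\hat x - x) + \tfrac{\rho}{2}\|\hat x - x\|_\X^2$, so that
\[
a - b \geqslant \psi(\hat x - x) + \tfrac{\rho}{2}\|\hat x - x\|_\X^2.
\]
Multiplying by $q b^{q-1} \geqslant 0$ and substituting into the convexity bound yields
\[
a^q \geqslant b^q + q b^{q-1}\psi(\hat x - x) + \tfrac{q b^{q-1}\rho}{2}\|\hat x - x\|_\X^2.
\]
If $\rho \leqslant 0$ (i.e. $f$ genuinely weakly convex), then $\tfrac{q b^{q-1}\rho}{2} \geqslant -\tfrac{qB^{q-1}|\rho|}{2} = -\tfrac{qB^{q-1}\rho}{2}\cdot(-1)$... more carefully: since $b \leqslant B$ and $\rho \leqslant 0$, we have $q b^{q-1}\rho \geqslant q B^{q-1}\rho$, so $\tfrac{q b^{q-1}\rho}{2}\|\hat x-x\|_\X^2 \geqslant \tfrac{qB^{q-1}\rho}{2}\|\hat x-x\|_\X^2$ only fails in sign — instead note $-qB^{q-1}\rho = qB^{q-1}|\rho| \geqslant q b^{q-1}|\rho| = -q b^{q-1}\rho$, hence $q b^{q-1}\rho \geqslant q B^{q-1}\rho$ is false; rather $q b^{q-1}\rho \geqslant q B^{q-1}\rho$ holds because both sides are negative and $b^{q-1}\leqslant B^{q-1}$ makes $q b^{q-1}\rho$ the larger (less negative) quantity. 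Thus the bound above implies the claimed weak-convexity inequality for $f^q$ with constant $qB^{q-1}\rho$.

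The main obstacle I anticipate is the subgradient chain rule in the Banach-space Fréchet-subdifferential setting: justifying $\partial(f^q)(x) \subseteq q f(x)^{q-1}\partial f(x)$ cleanly. For $f(x)>0$ this follows from the fact that $t\mapsto t^q$ is $C^1$ near $f(x)$, so composition with a smooth function of nonzero derivative gives exact transformation of Fréchet subgradients; for $f(x)=0$, nonnegativity of $f^q$ forces $x$ to be a local minimizer of $f^q$, so $\partial(f^q)(x)\subseteq\{0\}$ and the inequality is immediate. An alternative that sidesteps the chain rule entirely is to prove weak convexity directly from characterization (1): show $f^q(\lambda x_1 + (1-\lambda)x_2) \leqslant \lambda f^q(x_1) + (1-\lambda)f^q(x_2) - \tfrac{qB^{q-1}\rho\lambda(1-\lambda)}{2}\|x_1-x_2\|_\X^2$, using convexity of $t\mapsto t^q$ on $[0,B]$ together with monotonicity to propagate the $-\tfrac{\rho\lambda(1-\lambda)}{2}\|x_1-x_2\|_\X^2$ term through, bounding $b^{q-1}\leqslant B^{q-1}$ as above. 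I would likely present the subgradient-based argument as the primary proof since it is shortest, with the pointwise two-variable inequality doing all the real work.
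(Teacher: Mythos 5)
Your proof is correct in substance, but it takes a genuinely different route from the paper's. You verify the subgradient characterisation of weak convexity for $g=f^q$: the chain rule $\partial(f^q)(x)\subseteq q f(x)^{q-1}\partial f(x)$ (with the $f(x)=0$, $q>1$ case handled via $\partial g(x)=\{0\}$), combined with the tangent-line inequality $a^q\geqslant b^q+qb^{q-1}(a-b)$ and the bound $f(x)^{q-1}\leqslant B^{q-1}$, converts the modulus $\rho$ into $qB^{q-1}\rho$. The paper instead works entirely with the secant characterisation: it bounds $f(\lambda x_1+(1-\lambda)x_2)$ by $\min\bigl\{\lambda f(x_1)+(1-\lambda)f(x_2)+\tfrac{\rho\lambda(1-\lambda)}{2}\|x_1-x_2\|_{\X}^2,\,B\bigr\}$, pulls the $\min$ inside so the increment is capped at $B-\lambda f(x_1)-(1-\lambda)f(x_2)$, and applies the mean value theorem to $t\mapsto t^q$ with the intermediate point guaranteed to lie in $[\lambda f(x_1)+(1-\lambda)f(x_2),B]$. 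That clipping against $B$ is needed precisely because the secant upper bound can overshoot $B$ --- a wrinkle your sketched alternative via characterisation (1) would also have to address --- whereas your subgradient route sidesteps it, since the base point $f(x)$ automatically lies in $[0,B]$. The price of your route is that it leans on the Fr\'echet-subdifferential chain rule and on the equivalence between the subgradient inequality and $\rho$-convexity for locally Lipschitz functions, both of which require some care outside finite dimensions; the paper's argument is purely pointwise and avoids subdifferential calculus altogether. One bookkeeping issue you should fix: your write-up wavers between treating $\rho$ as a non-negative weak-convexity modulus (the first display, with $-\tfrac{qB^{q-1}\rho}{2}$) and as a non-positive convexity parameter (the later displays, with $+\tfrac{\rho}{2}$ and ``$\rho\leqslant 0$''), which is what produces the tangled aside about whether $qb^{q-1}\rho\geqslant qB^{q-1}\rho$; under either convention held consistently the argument closes, but pick one.
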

\begin{proof} Note that by Taylor's theorem, for all $y, h \geqslant 0$,
$
(y + h)^q =  y^q +  q\xi ^{q-1}h$
for some $\xi \in [y,y+h]$. 
We have that for all $ x_1, x_2 \in \X$ and $\lambda \in[0,1]$, $0 \leqslant\lambda f\left(x_1\right)+(1-\lambda) f\left(x_2\right) \leqslant B$ and
\begin{align*}
    0 \leqslant f\left(\lambda x_1+(1-\lambda) x_2\right) \leqslant \min\left\{ \lambda f\left(x_1\right)+(1-\lambda) f\left(x_2\right) + \frac{\rho\lambda(1-\lambda)}{2} \left\|x_1-x_2\right\|_{\X}^2, B \right\}. 
    \end{align*}
Hence, since $y \mapsto y^q$ is monotonic on $[0,\infty)$,
    \begin{align*}
    f(\lambda x_1&+(1-\lambda) x_2)^q \\ &\leqslant  \left(\min \left\{\lambda f\left(x_1\right)+(1-\lambda) f\left(x_2\right) + \frac{\rho\lambda(1-\lambda)}{2} \left\|x_1-x_2\right\|_{\X}^2, B\right\}\right)^q\\
    &\leqslant  \left(\lambda f\left(x_1\right)+(1-\lambda) f\left(x_2\right) + \min \left\{\frac{\rho\lambda(1-\lambda)}{2} \left\|x_1-x_2\right\|_{\X}^2, B-\lambda f\left(x_1\right)- (1-\lambda) f\left(x_2\right)\right\}\right)^q\\
    &= \left(\lambda f\left(x_1\right)+(1-\lambda) f\left(x_2\right)\right)^q + q\xi^{q-1} \min \left\{\frac{\rho\lambda(1-\lambda)}{2} \left\|x_1-x_2\right\|_{\X}^2, B-\lambda f\left(x_1\right)- (1-\lambda) f\left(x_2\right)\right\}\\
    &\leqslant \lambda f(x_1)^q + (1-\lambda) f(x_2)^q + q B^{q-1}\frac{\rho\lambda(1-\lambda)}{2} \left\|x_1-x_2\right\|_{\X}^2,
    \end{align*}
    since $\xi \in \left [\lambda f\left(x_1\right)+(1-\lambda) f\left(x_2\right)  , B \right]$ and $y \mapsto y^q$ is convex. 
\end{proof}

For an extended real-valued function $f: \X \rightarrow(-\infty,+\infty]$, let $\operatorname{dom} f:=\{x \in \X: f(x)<+\infty\}$ be its domain and
\[
f^*(\psi):=\sup _{x \in \X}\{\psi(x) -f(x)\}, \qquad \psi \in \X^*,
\]
be its conjugate function, which is always closed, convex, and lower semi-continuous (l.s.c.), see Theorem 4.3 in  \cite{doi:10.1137/1.9781611974997}. 
\begin{theorem}[{Fenchel–Moreau theorem; Theorem 4.2.1 in \cite{borwein2006convex}}]\label{thm:dualdual}
    Let $\X$ be a locally convex Hausdorff space, let $f: \X \to (-\infty,+\infty]$ be convex and l.s.c., and let $\phi: \X \to \X^{**}$ be the canonical embedding which sends $x$ to the pointwise evaluation map $\delta_x:\psi \mapsto \psi(x)$. Then $f = f^{**} \circ \phi $, i.e., for all $x \in \X$
    \[
    f(x) = \max _{\psi \in \X^*}\{\psi(x)-f^*(\psi)\}.
    \]
    If $\X$ is a Hilbert space, then by the Riesz representation theorem it follows that 
     \[
    f(x) = \max _{\x' \in \X}\{\langle x,x'\rangle_{\X} -f^*(x')\}.
    \]
\end{theorem}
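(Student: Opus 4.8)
The plan is to prove the biconjugation identity $f=f^{**}\circ\phi$ directly, and then obtain the Hilbert-space statement by transporting everything along the Riesz isomorphism $\X^*\cong\X$. If $f\equiv+\infty$ the identity is immediate, since then $f^*\equiv-\infty$ and so $f^{**}\equiv+\infty$; hence I may assume $f$ is proper. One inequality is free: for every $x\in\X$ and $\psi\in\X^*$, the definition of $f^*$ gives the Fenchel--Young inequality $\psi(x)-f^*(\psi)\leqslant f(x)$, and taking the supremum over $\psi$ yields $f^{**}(\phi(x))\leqslant f(x)$. So the whole content is the reverse inequality.

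For the reverse inequality I would argue by contradiction via separation of the epigraph. Suppose $f^{**}(\phi(x_0))<f(x_0)$ for some $x_0$, and fix $r$ with $f^{**}(\phi(x_0))<r<f(x_0)$. Then $(x_0,r)$ lies outside $\operatorname{epi} f=\{(x,t)\in\X\times\mathbb{R}:f(x)\leqslant t\}$, which is convex (as $f$ is convex) and closed (as $f$ is l.s.c.). Since $\X\times\mathbb{R}$ is again locally convex and Hausdorff, the strict-separation form of Hahn--Banach gives a nonzero continuous linear functional $(\psi,s)\in\X^*\times\mathbb{R}$ and a scalar $\alpha$ with $\psi(x)+st\geqslant\alpha>\psi(x_0)+sr$ for all $(x,t)\in\operatorname{epi} f$. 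Letting $t\to+\infty$ for a fixed $x\in\operatorname{dom} f$ forces $s\geqslant0$, and I then split into the two cases $s>0$ and $s=0$.

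The case $s>0$ is routine: putting $t=f(x)$ and dividing by $s$ shows $f^*(-\psi/s)\leqslant-\alpha/s$, whence $f^{**}(\phi(x_0))\geqslant(-\psi/s)(x_0)-f^*(-\psi/s)\geqslant(\alpha-\psi(x_0))/s>r$, contradicting $r>f^{**}(\phi(x_0))$. The delicate case is $s=0$, where the inequality only says $\psi(x)\geqslant\alpha>\psi(x_0)$ for $x\in\operatorname{dom} f$. To exploit this I first need that $f^*$ is proper: separating $(x_1,f(x_1)-1)$ from $\operatorname{epi} f$ for some $x_1\in\operatorname{dom} f$ produces a functional whose vertical component is \emph{strictly} positive (the $s=0$ alternative is impossible here because $x_1\in\operatorname{dom} f$), hence some $\psi_0\in\operatorname{dom} f^*$. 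Then for all $\lambda\geqslant0$ one gets $f^*(\psi_0-\lambda\psi)\leqslant f^*(\psi_0)-\lambda\alpha$ by bounding $-\lambda\psi(x)\leqslant-\lambda\alpha$ on $\operatorname{dom} f$, so $f^{**}(\phi(x_0))\geqslant\psi_0(x_0)-f^*(\psi_0)+\lambda(\alpha-\psi(x_0))$, and since $\alpha-\psi(x_0)>0$ letting $\lambda\to+\infty$ forces $f^{**}(\phi(x_0))=+\infty$, again a contradiction. This establishes $f=f^{**}\circ\phi$.

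Finally, in the Hilbert-space setting the Riesz representation theorem identifies $\X^*$ (and $\X^{**}$) with $\X$ — under which $\phi$ becomes the identity and the duality pairing becomes $\langle\cdot,\cdot\rangle_{\X}$ — and substituting this into the identity just proved gives the stated formula; the supremum is attained (so that it is genuinely a $\max$) whenever $\partial f(x)\neq\emptyset$, e.g. if $f$ is finite and continuous at $x$. I expect the main obstacle to be the degenerate case $s=0$: it is exactly there that one must invoke the locally-convex strict-separation theorem in full strength and must first secure $\operatorname{dom} f^*\neq\emptyset$ by a preliminary separation, rather than relying on the elementary finite-dimensional picture.
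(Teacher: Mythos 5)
Your proof is correct and complete. The paper itself offers no proof of this statement --- it is quoted as Theorem 4.2.1 of the cited reference --- so there is no in-paper argument to compare against; what you give is the standard epigraph-separation proof of Fenchel--Moreau: Fenchel--Young for the easy inequality, strict Hahn--Banach separation of $(x_0,r)$ from the closed convex set $\operatorname{epi} f$ for the converse, with the vertical component forced to satisfy $s\geqslant 0$, the case $s>0$ handled by normalisation, and the degenerate case $s=0$ handled correctly by first securing $\operatorname{dom} f^*\neq\emptyset$ via a preliminary separation and then tilting by $-\lambda\psi$ and sending $\lambda\to+\infty$. The one point worth flagging is the discrepancy between what you prove and what the statement literally asserts: the theorem writes $\max$, but for a general proper convex l.s.c.\ $f$ the supremum defining $f^{**}(\phi(x))$ need not be attained at points where $\partial f(x)=\emptyset$ (which can occur on the boundary of $\operatorname{dom} f$). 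You note this yourself and give the correct sufficient condition for attainment ($\partial f(x)\neq\emptyset$, e.g.\ $f$ finite and continuous at $x$); in the paper's actual use of the theorem the function $F=\mathcal{D}(\cdot,y^\delta)$ is finite-valued and continuous, so the maximum is genuinely attained and nothing is lost.
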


\begin{definition}[Moreau envelope and Proximal operator]
For any $f: \X \rightarrow(-\infty,+\infty]$ and $\nu>0$, the \emph{Moreau envelope} and the \emph{proximal mapping} are defined for all $x \in \X$ by
\begin{align*}
f_\nu(x)&:=\inf_{z \in \X} \left\{f(z)+\frac{1}{2 \nu}\|z-x\|_{\X}^2\right\}, \\
\operatorname{prox}_{\nu f}(x)&:=\underset{z \in \X}{\operatorname{argmin}}\left\{f(z)+\frac{1}{2 \nu}\|z-x\|_{\X}^2\right\}.
\end{align*}
\end{definition}
The Moreau envelope of the function has some rather nice properties with relation to the original function:
\begin{theorem}[Moreau envelope and the proximal point map; Lemma 2.5 in \cite{davis2022proximal}]\label{thm:moreauenv_wc}
Consider a $\rho$-weakly convex function $f: \mathbb{R}^d \rightarrow \mathbb{R} \cup\{\infty\}$ and fix a parameter $\mu<\rho^{-1}$. Then, the following are true:
\begin{itemize}
    \item The envelope $f_\mu$ is $C^1$-smooth with its gradient given by
$$
\nabla f_\mu(x)=\mu^{-1}\left(x-\operatorname{prox}_{\mu f}(x)\right) .
$$
 \item The envelope $f_\mu(\cdot)$ is $\mu^{-1}$-smooth and $\frac{\rho}{1-\mu \rho}$-weakly convex meaning:
$$
-\frac{\rho}{2(1-\mu \rho)}\left\|x^{\prime}-x\right\|^2 \leqslant f_\mu\left(x^{\prime}\right)-f_\mu(x)-\left\langle\nabla f_\mu(x), x^{\prime}-x\right\rangle \leqslant \frac{1}{2 \mu}\left\|x^{\prime}-x\right\|^2,
$$
for all $x, x^{\prime} \in \mathbb{R}^d$.
\item The proximal map $\operatorname{prox}_{\mu f}(\cdot)$ is $\frac{1}{1-\mu \rho}$-Lipschitz continuous and the gradient map $\nabla f_\mu$ is Lipschitz continuous with constant $\max \left\{\mu^{-1}, \frac{\rho}{1-\mu \rho}\right\}$.
\end{itemize}
\end{theorem}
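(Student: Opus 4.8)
The plan is to reduce the whole statement to the classical convex Moreau envelope theory by ``completing the square''. Assume, as is standard (and as holds for the $\mathcal{R}$ of interest), that $f$ is also l.s.c., and set $g := f + \tfrac{\rho}{2}\|\cdot\|^2$, which is then proper, l.s.c.\ and convex. Fix $\mu < \rho^{-1}$ and write $c := 1-\mu\rho \in (0,1)$ and $\mu' := \mu/c$. The inner objective $z \mapsto f(z) + \tfrac{1}{2\mu}\|z-x\|^2$ is $(\mu^{-1}-\rho)$-strongly convex and coercive, hence has a unique minimiser, so $\operatorname{prox}_{\mu f}$ is single-valued; moreover, substituting $f = g - \tfrac\rho2\|\cdot\|^2$ and completing the square in $z$,
\begin{equation*}
f(z) + \tfrac{1}{2\mu}\|z-x\|^2 = g(z) + \tfrac{1}{2\mu'}\bigl\|z - \tfrac{x}{c}\bigr\|^2 - \tfrac{\rho}{2c}\|x\|^2 .
\end{equation*}
Taking the infimum and the argmin over $z$ gives the two identities $\operatorname{prox}_{\mu f}(x) = \operatorname{prox}_{\mu' g}(x/c)$ and $f_\mu(x) = g_{\mu'}(x/c) - \tfrac{\rho}{2c}\|x\|^2$, where $g_{\mu'}$ is the Moreau envelope of the convex function $g$ with parameter $\mu'$.

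Now I would invoke the classical facts for convex $g$ (see e.g.\ \cite{doi:10.1137/1.9781611974997}): $g_{\mu'}$ is convex and $C^1$ with $\nabla g_{\mu'}(u) = (\mu')^{-1}\bigl(u - \operatorname{prox}_{\mu' g}(u)\bigr)$, and $\operatorname{prox}_{\mu' g}$ is firmly nonexpansive, in particular $1$-Lipschitz. Pulling these back through the affine map $x \mapsto x/c$: $f_\mu$ is $C^1$ as a sum of $C^1$ functions; $\operatorname{prox}_{\mu f} = \operatorname{prox}_{\mu' g}(\cdot/c)$ is $c^{-1} = (1-\mu\rho)^{-1}$-Lipschitz; and differentiating $f_\mu(x) = g_{\mu'}(x/c) - \tfrac{\rho}{2c}\|x\|^2$ and collecting the terms proportional to $x$ (which uses $c\mu' = \mu$ and $\tfrac{1}{\mu c} - \tfrac{\rho}{c} = \mu^{-1}$) yields exactly $\nabla f_\mu(x) = \mu^{-1}\bigl(x - \operatorname{prox}_{\mu f}(x)\bigr)$. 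Since $f_\mu + \tfrac{\rho}{2c}\|\cdot\|^2 = g_{\mu'}(\cdot/c)$ is convex, the $C^1$ characterisation of $\rho$-convexity (the second item in \Cref{ap:convanalysis}) applied to $f_\mu$ gives precisely the left-hand (weak convexity) inequality, with modulus $\rho/(1-\mu\rho)$.

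For the right-hand ($\mu^{-1}$-smoothness) inequality I would argue directly from the envelope, needing no convexity: writing $p := \operatorname{prox}_{\mu f}(x)$, evaluate $f_\mu(x') \leqslant f(p) + \tfrac{1}{2\mu}\|p - x'\|^2$ and subtract the exact identity $f_\mu(x) = f(p) + \tfrac{1}{2\mu}\|p-x\|^2$; expanding the two squared norms and using $\nabla f_\mu(x) = \mu^{-1}(x-p)$ turns the right-hand side into exactly $\langle \nabla f_\mu(x), x'-x\rangle + \tfrac{1}{2\mu}\|x'-x\|^2$. Finally, the Lipschitz moduli: because $\nabla f_\mu$ is a gradient field its (Alexandrov) Jacobian is symmetric a.e., and the two-sided inequality just proved pins its eigenvalues into $\bigl[-\tfrac{\rho}{1-\mu\rho},\, \tfrac{1}{\mu}\bigr]$ a.e.; hence the operator norm of the Jacobian is at most $\max\{\mu^{-1}, \rho/(1-\mu\rho)\}$ a.e., which gives the stated Lipschitz constant for $\nabla f_\mu$, while the bound for $\operatorname{prox}_{\mu f}$ is the one already obtained above (equivalently, $\operatorname{prox}_{\mu f} = \mathrm{Id} - \mu\nabla f_\mu$).

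I expect the main difficulty to be twofold. First, the bookkeeping of constants through the change of variables: since the entire theorem is a collection of sharp constant claims, every $c$, $\mu'$, and factor of $\rho$ must be tracked exactly. Second, and more genuinely, deriving the \emph{sharp} Lipschitz modulus $\max\{\mu^{-1}, \rho/(1-\mu\rho)\}$ for $\nabla f_\mu$ rather than a crude bound such as $1/(\mu(1-\mu\rho))$ that falls out of naive use of firm nonexpansiveness --- this really does require exploiting that $\nabla f_\mu$ is the gradient of a function simultaneously bounded above and below by quadratics (symmetric Jacobian), or an equivalent Baillon--Haddad cocoercivity argument, not merely the generalized-monotonicity estimates that the two-sided inequality supplies pointwise.
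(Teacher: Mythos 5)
The paper does not actually prove \Cref{thm:moreauenv_wc}: it imports the statement verbatim as Lemma~2.5 of \cite{davis2022proximal}, so there is no in-paper argument to compare against. Your reduction to the convex case is correct and is essentially the standard derivation behind the cited result. The completed square is right: with $c=1-\mu\rho$ and $\mu'=\mu/c$ one indeed gets $f(z)+\tfrac{1}{2\mu}\|z-x\|^2=g(z)+\tfrac{1}{2\mu'}\|z-x/c\|^2-\tfrac{\rho}{2c}\|x\|^2$, hence $\operatorname{prox}_{\mu f}(x)=\operatorname{prox}_{\mu' g}(x/c)$ and $f_\mu(x)=g_{\mu'}(x/c)-\tfrac{\rho}{2c}\|x\|^2$, and the bookkeeping $c\mu'=\mu$, $\tfrac{1}{\mu c}-\tfrac{\rho}{c}=\mu^{-1}$ does recover $\nabla f_\mu(x)=\mu^{-1}(x-\operatorname{prox}_{\mu f}(x))$, the $(1-\mu\rho)^{-1}$-Lipschitz bound on the prox, and the modulus $\rho/(1-\mu\rho)$ of weak convexity; the direct upper estimate via $f_\mu(x')\leqslant f(p)+\tfrac{1}{2\mu}\|p-x'\|^2$ with $p=\operatorname{prox}_{\mu f}(x)$ is also exactly right. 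Two points deserve to be made explicit rather than parenthetical. First, the hypothesis that $f$ is proper and l.s.c.\ (closed) is genuinely needed for $\operatorname{prox}_{\mu f}$ to be well defined; you correctly flag this, and it is harmless here since the regularisers to which the theorem is applied are continuous. Second, for the sharp Lipschitz constant $\max\{\mu^{-1},\rho/(1-\mu\rho)\}$ of $\nabla f_\mu$, the Alexandrov/Rademacher argument should be ordered carefully: one first observes that $\nabla f_\mu=\mu^{-1}(\mathrm{Id}-\operatorname{prox}_{\mu f})$ is Lipschitz with the crude constant $\mu^{-1}(1+(1-\mu\rho)^{-1})$, which licenses a.e.\ differentiability of $\nabla f_\mu$ and the symmetry of its a.e.\ Jacobian (as the a.e.\ second derivative of $f_\mu$); only then does the two-sided quadratic bound pin the eigenvalues into $[-\rho/(1-\mu\rho),\mu^{-1}]$ and the fundamental theorem of calculus along segments upgrade the constant. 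With that ordering made explicit your argument is complete and self-contained modulo the classical convex Moreau facts you invoke.
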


\section{Proof of \Cref{thm:bounded}}\label{ap:proof_bounded}

First, we introduce some useful notation. For $\psi \in \X^*$ and $x \in \X$, we define the pairing $\left\langle\psi,x\right\rangle_{\X^*\times \X} := \psi(x)$. 
For $B \subseteq \X^*$ and $x \in \X$, we define $\left\langle B,x\right\rangle_{\X^*\times \X} := \{ \psi(x) : \psi \in B\}.$ 
For sets $A,B$, we write $A \leqslant B$ if for all $a \in A$ and $b \in B$, $a \leqslant b$. We start with a lemma which will also prove useful in other proofs. 

\begin{lemma}\label{lem:sclem}
    Let $f:\X \to (-\infty,+\infty]$ be $\mu$-strongly convex. Then for all $x,z\in \X$
    \[
    \left\langle\partial f(x)-\partial f(z),x-z\right\rangle_{\X^*\times \X
    }\geqslant\mu\|x-z\|_{\X}^2,
    \]
    i.e., for all $\psi \in \partial f(x)$ and $\xi \in \partial f(z)$, 
    \[
\left\langle \psi - \xi ,x-z\right\rangle_{\X^*\times \X}
    \geqslant \mu \|x - z\|^2_{ \X}.
    \]
\end{lemma}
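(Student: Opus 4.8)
The plan is to prove \Cref{lem:sclem} directly from the first-order characterisation of $\mu$-strong convexity, which is available via item (2) of the equivalences listed in \Cref{ap:convanalysis} (applied with $\rho = \mu > 0$). Since $f$ is $\mu$-strongly convex, for any $x, z \in \X$, any $\psi \in \partial f(x)$, and any $\xi \in \partial f(z)$, we have the two subgradient inequalities
\begin{align*}
f(z) &\geqslant f(x) + \psi(z - x) + \frac{\mu}{2}\|z - x\|_{\X}^2, \\
f(x) &\geqslant f(z) + \xi(x - z) + \frac{\mu}{2}\|x - z\|_{\X}^2.
\end{align*}
(One should first note that if either subdifferential is empty the statement is vacuous, so we may assume $\psi, \xi$ exist, which in particular forces $f(x), f(z) < \infty$.)

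Next I would simply add these two inequalities. The $f(x)$ and $f(z)$ terms cancel, leaving
\[
0 \geqslant \psi(z - x) + \xi(x - z) + \mu \|x - z\|_{\X}^2,
\]
and rearranging, using $\psi(z-x) + \xi(x-z) = -\psi(x-z) + \xi(x-z) = -(\psi - \xi)(x-z)$ and the pairing notation $\langle \psi - \xi, x - z\rangle_{\X^* \times \X} := (\psi - \xi)(x-z)$ introduced just before the lemma, gives exactly
\[
\langle \psi - \xi, x - z\rangle_{\X^* \times \X} \geqslant \mu \|x - z\|_{\X}^2.
\]
Finally, I would observe that the set-valued statement $\langle \partial f(x) - \partial f(z), x - z\rangle_{\X^* \times \X} \geqslant \mu\|x-z\|_{\X}^2$ is, by the set-inequality convention $A \leqslant B$ defined in the same paragraph, literally the assertion that this holds for every $\psi \in \partial f(x)$ and every $\xi \in \partial f(z)$, so the two displayed forms of the conclusion are equivalent and both are established.

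There is essentially no obstacle here: the only things to be careful about are (i) handling the edge case of empty subdifferentials so the pairing notation makes sense, and (ii) being precise that $\partial f(x) - \partial f(z)$ denotes the Minkowski difference set $\{\psi - \xi : \psi \in \partial f(x),\ \xi \in \partial f(z)\}$, so that the set-valued inequality unpacks to the pointwise one. The lemma is a standard monotonicity-of-the-subdifferential fact for strongly convex functions, and the proof is a two-line addition of subgradient inequalities.
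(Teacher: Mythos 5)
Your proposal is correct and is essentially identical to the paper's own proof: both apply the first-order subgradient inequality for $\mu$-strong convexity at $x$ and at $z$, then add the two inequalities so the function values cancel. The extra remarks on empty subdifferentials and the set-valued notation are harmless clarifications and do not change the argument.
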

\begin{proof}
    Because $f$ is $\mu$-strongly convex, it follows that for all $x,x'\in \X$ and $\psi \in \partial f(x)$,
    \[
    f(x') \geqslant f(x) + \langle \psi, x'-x\rangle_{\X^* \times \X} + \frac12 \mu \|x'-x\|^2_{\X},
    \]
    and therefore 
    \[
    \langle \psi, x - z \rangle_{\X^*\times \X}  \geqslant f(x) - f(z) + \frac12 \mu \|x - z\|^2_{\X}.
    \]
    By the same argument,
     \[
    \langle \xi, z -x\rangle_{\X^*\times \X}  \geqslant f(z) - f(x) + \frac12 \mu \|x - z\|^2_{\X},
    \]
    and the result follows. 
\end{proof}
\begin{lemma}
    \label{lem:Lipsubdiff}
    Let $f:\X \to (-\infty,+\infty]$ be weakly convex and $L$-Lipschitz. Then for all $x,z \in \X$ and $\psi \in \partial f(x)$, $\langle \psi, z-x\rangle_{\X^*\times \X}  \leqslant L \|x - z\|_{\X}$. 
\end{lemma}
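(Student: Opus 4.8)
The plan is to reduce to the subgradient characterisation of $\rho$-convexity recalled in \Cref{ap:convanalysis} and then pass to a limit along the segment from $x$ to $z$. Since $f$ is weakly convex it is $\rho$-convex for some $\rho \leqslant 0$, and in fact the argument below goes through for any $\rho \in \mathbb{R}$. Because $f$ is $L$-Lipschitz it is finite-valued and locally Lipschitz, so the second characterisation in \Cref{ap:convanalysis} applies: for every $\psi \in \partial f(x)$ and every $\hat x \in \X$,
\[
f(\hat x) \;\geqslant\; f(x) + \langle \psi, \hat x - x\rangle_{\X^*\times\X} + \tfrac{\rho}{2}\|\hat x - x\|_{\X}^2 .
\]

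Next I would specialise $\hat x$ to a point on the segment between $x$ and $z$. Fix $z \in \X$ and $t \in (0,1]$ and set $\hat x := x + t(z-x)$, so that $\hat x - x = t(z-x)$. The displayed inequality rearranges to
\[
t\,\langle \psi, z - x\rangle_{\X^*\times\X} \;\leqslant\; f(\hat x) - f(x) - \tfrac{\rho}{2}\,t^2\|z-x\|_{\X}^2 .
\]
Using that $f$ is $L$-Lipschitz, $f(\hat x) - f(x) \leqslant L\|\hat x - x\|_{\X} = L\,t\,\|z-x\|_{\X}$, so dividing through by $t > 0$ gives
\[
\langle \psi, z - x\rangle_{\X^*\times\X} \;\leqslant\; L\|z-x\|_{\X} - \tfrac{\rho}{2}\,t\,\|z-x\|_{\X}^2 .
\]

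Finally I would let $t \downarrow 0$, which kills the last term and yields $\langle \psi, z - x\rangle_{\X^*\times\X} \leqslant L\|z-x\|_{\X} = L\|x-z\|_{\X}$, as claimed. There is essentially no obstacle here: the only point requiring a moment's care is that the subgradient inequality with the quadratic correction is legitimately available, which follows from local Lipschitzness of $f$; the sign of $\rho$ is immaterial, since the correction term vanishes in the limit regardless of whether $f$ is strongly convex, convex, or strictly weakly convex.
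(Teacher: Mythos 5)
Your proposal is correct and follows essentially the same route as the paper: restrict to the segment $x + t(z-x)$, bound $f(\hat x)-f(x)$ by $Lt\|z-x\|_{\X}$ via Lipschitzness, divide by $t$, and let $t\downarrow 0$. The only cosmetic difference is that you invoke the global quadratic subgradient inequality for $\rho$-convex locally Lipschitz functions, whereas the paper works directly from the $o(\|x'-x\|_{\X})$ term in the Fr\'echet subdifferential definition; both error terms vanish in the limit, so the arguments coincide.
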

\begin{proof}
    Let $x'_t:= tz + (1-t)x$. Then as $t \to 0$, $\x'_t \to x$ and so 
    \[
    f(x'_t) \geqslant f(x) + \langle \psi, x'_t-x\rangle_{\X^*\times\X}  + o(\|x'_t - x\|_{\X})
    \]
    and therefore, since $x'_t - x = t(z-x)$, 
    \[
    t \langle \psi, z-x\rangle_{\X^*\times\X}  + o(t) \leqslant f(x'_t) - f(x) \leqslant L\|x'_t - x\|_{\X} = tL\|z-x\|_{\X}.
    \]
    Dividing both sides by $t$ and taking $t \to 0$ completes the proof.
\end{proof}

\begin{proof}[Proof of \Cref{thm:bounded}]
    For any $x,z\in\X$, by \Cref{lem:sclem}, $$\left\langle\partial R_{sc}(x)-\partial R_{sc}(z),x-z\right\rangle_{%
    \X} \geqslant\mu\|x-z\|_{\X}^2.$$ 
Therefore, in the Lipschitz case, for $\R=R_{wc}+R_{sc}$, we have: 
    \begin{align*}
    \left\langle\left(\partial R_{wc}(x) + \partial R_{sc}(x)\right),z-x\right\rangle_{
        \X^*\times 
        \X} 
        &\subseteq -\left\langle\partial R_{sc}(x)-\partial R_{sc}(z),x-z\right\rangle_{\X^*\times 
        \X} + \left\langle\partial R_{wc}(x)+\partial R_{sc}(z),z-x\right\rangle_{
        \X^*\times 
        \X}\\
        &\leqslant -\mu\|x-z\|_{\X}^2 + \left(L_R+\left\|\partial R_{sc}(z)\right\|\right)\|x-z\|_{\X},
    \end{align*}
where the first half of the inequality follows from the above, and the second half since, for all $\psi \in \partial R_{wc}(x)$ and $\xi \in \partial R_{sc}(z)$ 
\begin{align*}
 \langle \psi, z-x\rangle_{\X^*\times\X}  & \leqslant L_{R}\|x-z\|_{\X} \quad \text{by \Cref{lem:Lipsubdiff}, and}\\
 \langle \xi, z-x\rangle_{\X^*\times\X}  &\leqslant \|\xi\|_{\X^*}\|x-z\|_{\X} \leqslant \left\|\partial R_{sc}(z)\right\|\|x-z\|_{\X}.
\end{align*}
 Then, for any $\hat{x}$ a stationary point we have that $0\in \partial R_{wc}(\hat x) + \partial R_{sc}(\hat x) $ and so
\[
0 \leqslant -\mu \|x-z\|_{\X}^2 + \left(L_R+\left\|\partial R_{sc}(z)\right\|\right)\|x-z\|_{\X}
\]

which gives
\begin{equation*}
 \|\hat{x}-z\|_{\X}\leqslant \frac{1}{\mu }\left(L_R+\|\partial R_{sc}(z)\|\right) .
\end{equation*}
In the bounded weak convex case, using weak convexity instead: 
\[R_{wc}(z)\geqslant R_{wc}(x)+\left\langle\partial R_{wc}(x),z-x\right\rangle_{
\X^* \times 
\X}-\frac{\gamma}{2}\|z-x\|_{\X}^2.\]
Then we get the following:
    \begin{align*}
\left\langle\left(\partial{R}_{wc}(x) + \partial{R}_{sc}(x)\right),z-x\right\rangle_{
\X^* \times 
\X}
        &\subseteq -\left\langle\partial R_{sc}(x)-\partial R_{sc}(z),x-z\right\rangle_{\X^*\times 
        \X} + \left\langle\partial R_{wc}(x)+\partial R_{sc}(z),z-x\right\rangle_{\X^*\times 
        \X}\\
        &\leqslant -\mu\|x-z\|_{\X}^2 + \frac{\gamma}{2}{\|x-z\|_{\X}^2}+\left(\|\partial R_{sc}(z)\|\right)\|x-z\|_{ \X} +  R_{wc}(z).
    \end{align*}
 Taking $\hat{x}$ to be any stationary point and completing the square, this implies that
\begin{equation*}
 \left(\|\hat{x}-z\|_{\X}- \frac{1}{2(\mu-\frac{\gamma}{2})}\left(\|\partial R_{sc}(z)\|\right)\right)^2\leqslant  \frac{1}{4(\mu-\frac{\gamma}{2})^2}\left(\|\partial R_{sc}(z)\|\right)^2 + \frac{1}{(\mu-\frac{\gamma}{2})}R_{wc}(z),
\end{equation*}
or rearranging and using the triangle inequality: 
\begin{equation*}
 \|\hat{x}-z\|_{\X} \leqslant \frac{1}{(\mu-\frac{\gamma}{2})}\|\partial R_{sc}(z)\| + \sqrt{\frac{1}{(\mu-\frac{\gamma}{2})}R_{wc}(z)}.
\end{equation*}
    \end{proof}

\section{Unbounded critical points}\label{ap:counterexample}
Example construction of how to get infinitely many minimisers for the weak convex case.
\begin{nb}
        It may not be immediately clear why the condition is $\gamma<2\mu$ and not simply $\gamma<\mu$, thus requiring strong convexity of the overall functions. The idea for why arises from the lower boundedness of the weak convex function. As an example, one can consider the 1D case. The overall value of the function can not decrease arbitrarily, thus the derivative cannot be strictly negative. And thus heuristically, as the gradient of a strong convex function simply increases, the weak convex one has to decrease and then increase, overall resulting in the factor of 2, since it has to go up and then back down. \\
    Based on this heuristic understanding we can answer the question of whether it is necessary to have $\gamma<2\mu$ to have bounded stationary points, or if it can be made larger. As it turns out, we can construct functions with $\gamma>2\mu$ with unbounded stationary points. 
    Assuming we are working in 1D on a positive real line, and letting $R_{sc}=\frac12x^2$, assume we start with a stationary point at $x=x_0=1$, with value $R_{wc}=0$, i.e. derivative intersects $y=-x$. Now, we require that the function is bounded from below by 0 and thus gradient has to now become positive in such a way to increase and decrease quickly enough to once again cross $y=-x$. The largest area is found by taking the gradient to be $-\gamma x$. With this, we can find that the next intersection, and thus stationary point would be at $x_1=\frac{\gamma}{\gamma-2}x_0$. And continuing in such an iterative way, we can construct a positive weakly convex function that has unboundedly many stationary points. For concreteness, we can write the following exact form for the gradient of the function, if we let $m=\frac{\gamma}{\gamma-2}$:
    \begin{align*}%
        R'_{wc} = \left(m+\gamma\right)m^n-\gamma x,\quad\text{for } x\in[m^{n},m^{n+1}), n\in\mathbb{Z}
    \end{align*}
    \Cref{fig:wc_example} illustrates what the resulting functions looks like, and in particular, we can see that for any $\gamma>2$, we have infinitely many stationary points. 
    \begin{figure}[h]
        \centering
        \includegraphics[width=.4\textwidth]{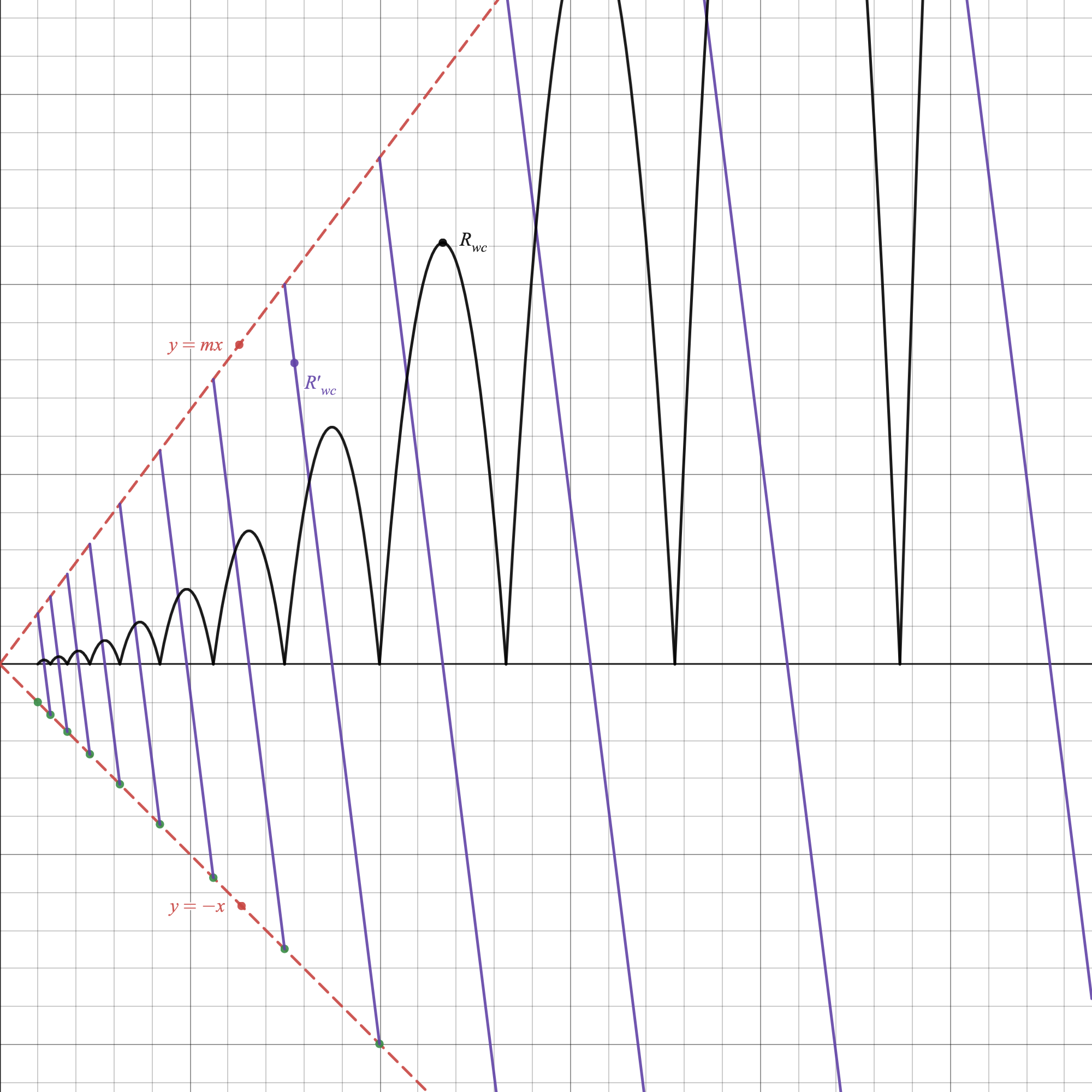}
        \caption{Example of a positive weakly convex function with $\gamma > 2\mu$ and unbounded stationary points.}
        \label{fig:wc_example}
    \end{figure}
\end{nb}

\section{Proofs and a counterexample for \Cref{sec:InvP}}
\subsection{Proof of \Cref{thm:stability}}\label{ap:stabilityproof}
\begin{proof}
One of the main limitations in achieving provable stability and convergence guarantees lies in the ability of proving boundedness of iterates $x_k$. However, thanks to the form of the regulariser, this is achieved almost immediately.
By \Cref{lem:sclem}, for all $u \in \X$, 
\[
\left\langle\partial R_{sc}(x_k)-\partial R_{sc}(u),x_k-u\right\rangle_{\X^*\times 
\X}\geqslant\mu\|x_k-u\|_{\X}^2
\]
 and for $\D$ convex: for all $x,z \in \X$ and $y \in \Y$, 
\[ \D(\A z,y)\geqslant\D(\A z,y)-\D(\A x,y)\geqslant \langle\partial_x\D(\A x,y),z-x\rangle_{\X^*\times
\X}\]
For any critical point $x_k$ and $z\in\X$, we have: 
\begin{align*}
0&\in\left\langle\partial_x\D(\A x_k,y_k)+\alpha\left(\partial\mathcal{R}_{wc}(x_k) + \partial\mathcal{R}_{sc}(x_k)\right),z-x_k\right\rangle_{\X^*\times
\X}\\
&\subseteq \langle\partial_x\D(\A x_k,y_k),z-x\rangle_{\X^*\times
\X} - \alpha\left\langle\partial\R_{sc}(x_k)-\partial\R_{sc}(z),x_k-z\right\rangle_{\X^*\times
\X} + \alpha\left\langle\partial\R_{wc}(x_k)+\partial\R_{sc}(z),z-x_k\right\rangle_{\X^*\times
\X}\\
        &\leqslant\D(\A z,y_k) - \alpha\left\langle\partial\R_{sc}(x_k)-\partial\R_{sc}(z),x_k-z\right\rangle_{%
        \times\X} + \alpha\left\langle\partial\R_{wc}(x_k)+\partial\R_{sc}(z),z-x_k\right\rangle_{%
        \X}.
\end{align*}
    
Therefore, since $\gamma<2\mu$, by \Cref{thm:bounded} and by rearranging and completing the square, we get:
\begin{equation}\label{eq:gammamubdd}
 \left(\|x_k-z\|_{\X}- \frac{1}{2(\mu-\frac{\gamma}{2})}\left(\|\partial\R_{sc}(z)\|\right)\right)^2\leqslant \frac{1}{\alpha(\mu-\frac{\gamma}{2})}\D(\A z,y_k) + \frac{1}{4(\mu-\frac{\gamma}{2})^2}\left(\|\partial\R_{sc}(z)\|\right)^2 + \frac{1}{(\mu-\frac{\gamma}{2})}\R_{wc}(z),
\end{equation}
which together with convergence of $y_k$ and continuity of $\D$ implies that the $(x_k)$ are bounded, and hence have a weak 
convergent subsequence since $\X$ is reflexive, via the Banach--Alaoglu theorem. All that remains is to show that all cluster points $x_+$ must be critical points. Passing to a subsequence, let $x_k \rightharpoonup x_{+}$. 
Then in the case of $\gamma \leqslant \mu$:
\begin{align*}
    \D(\A x_+,y^{\delta})+\alpha\mathcal{R}(x_+) &\leqslant \liminf _{k} \D(\A x_k,y_k) + \alpha\mathcal{R}(x_k) \quad\text{since $\R$ and $\D$ are weakly sequentially l.s.c.}\\
        &\leqslant \liminf _{k} \D(\A u,y_k) + \alpha\mathcal{R}(u) + \frac{\alpha}{2}(\gamma-\mu)\|x_k-u\|_{\X}^2 \quad\text{$\forall u\in\X$, by weak convexity}\\
        &\leqslant  \D(\A u,y^{\delta}) + \alpha\mathcal{R}(u) + \frac{\alpha}{2}(\gamma-\mu)\|x_{+}-u\|_{\X}^2 \quad\text{ $\forall u\in \X$,},
    \end{align*}
    where the last line follows since $x_k \rightharpoonup x_{+}$ and $\|\cdot\|_{\X}$ is weakly sequentially l.s.c., and therefore $0\in\partial\mathcal{J}_{\alpha, y^\delta}(x_{+})$ by definition of weak convexity. 
    
    In the case of $\mu < \gamma < 2\mu$, $R_{sc} - \frac12 \mu \|\cdot\|_{\X}^2$ weakly sequentially l.s.c., 
    \begin{align*}
    &\D(\A x_+,y^{\delta})+\alpha\mathcal{R}(x_+) - \frac{\alpha}{2}(\gamma-\mu)\|x_{+}-u\|_{\X}^2 
    \\& = \D(\A x_+,y^{\delta})+\alpha\mathcal{R}(x_+) - \frac{\alpha}{2}(\gamma-\mu)\|x_{+}\|_{\X}^2 + \alpha(\gamma-\mu)\langle x_{+},u\rangle_{\X} - \frac{\alpha}{2}(\gamma-\mu)\|u\|_{\X}^2 
    \\&= \D(\A x_+,y^{\delta})+\alpha\left(\mathcal{R}(x_+)- \frac{\mu}{2}\|x_{+}\|_{\X}^2 \right ) + \frac{\alpha}{2}(2\mu-\gamma)\|x_{+}\|_{\X}^2 + \alpha(\gamma-\mu)\langle x_{+},u\rangle_{\X} - \frac{\alpha}{2}(\gamma-\mu)\|u\|_{\X}^2 
    \\ &\leqslant \liminf _{k} \D(\A x_k,y_k) + \alpha\left(\mathcal{R}(x_k)- \frac{\mu}{2}\|x_{k}\|_{\X}^2 \right ) + \frac{\alpha}{2}(2\mu-\gamma)\|x_{k}\|_{\X}^2 + \alpha(\gamma-\mu)\langle x_{k},u\rangle_{\X} - \frac{\alpha}{2}(\gamma-\mu)\|u\|_{\X}^2\\ &\qquad \text{since $\R-\frac{\mu}{2}\|\cdot\|_{\X}^2$, $\|\cdot\|_{\X}$,  and $\D$ are weakly sequentially l.s.c., and inner product is weakly continuous}
    \\ &= \liminf _{k} \D(\A x_k,y_k) + \alpha\mathcal{R}(x_k) - \frac{\alpha}{2}(\gamma-\mu)\|x_k-u\|_{\X}^2 
        \\ &\leqslant \liminf _{k} \D(\A u,y_k) + \alpha\mathcal{R}(u)  \quad\text{$\forall u\in\X$, by weak convexity  and the criticality of $x_k$}\\
        &= \D(\A u,y^{\delta}) + \alpha\mathcal{R}(u) \quad\text{ $\forall u\in \X$, since $y_k \to y^\delta$ and $\D$ is continuous in its second argument}.
    \end{align*}
\end{proof}
\subsection{Proof of \Cref{thm:convreg}}\label{ap:convregproof}
\begin{proof}
    Similar to the theorem above, we first want to show boundedness of iterates $x_k$. To achieve this, we use the fact that $y^0 = \A u$, and therefore by \Cref{assumptions}(5)
    \[
    \D(\A u,y^{\delta_k})  = \D(y^0,y^{\delta_k})
    \leqslant C\left( \D(y^0,y^0) + \|y^{\delta_k}-y^0\|^p \right) =  
    C\delta_k^p.
    \]
    Hence, 
by \Cref{eq:gammamubdd}:
\begin{equation*}
 \left(\|x_k-u\|_{\X}- \frac{1}{2(\mu-\frac{\gamma}{2})}\left(\|\partial\R_{sc}(u)\|\right)\right)^2\leqslant \frac{C}{\alpha_k(\mu-\frac{\gamma}{2})}\delta_k^p  + \frac{1}{4(\mu-\frac{\gamma}{2})^2}\left(\|\partial\R_{sc}(u)\|\right)^2 + \frac{1}{(\mu-\frac{\gamma}{2})}\R_{wc}(u),
\end{equation*}
Therefore, as in the previous theorem, $(x_k)$ weakly
has convergent subsequences (due to reflexivity and the Banach--Alaoglu theorem) 
since $\delta_k^p/\alpha_k$ is assumed to converge. Passing to such a subsequence, let $x_k \rightharpoonup x_{+}$. We wish to show that $x_{+}$ is $\R$-criticising. We first show that $\A x_{+} = y^0$:
\begin{align*}
       0 \leqslant  \D(\A x_+,y^0) 
        &\leqslant \liminf _{k} \D(\A x_k,y^{\delta_k}) \quad \text{ as $\D$ is weakly sequentially l.s.c.}
        \\&\leqslant \liminf _{k} \D(\A x_k,y^{\delta_k}) + \alpha_k\mathcal{R}(x_k)  \quad \text{ as $\alpha_k\mathcal{R}(x_k)\geqslant 0$ }
        \\&\leqslant \liminf _k \D(\A u,y^{\delta_k}) + \alpha_k\mathcal{R}(u) + \frac{1}{2}\alpha_k(\gamma-\mu)\|x_k-u\|_{\X}^2 \quad\text{since $x_k \in \operatorname*{crit} \mathcal{J}_{\alpha_k,y^{\delta_k}}$, where $\A u = y^0$}\\
        &\leqslant \liminf_k  C\delta_k^p + \alpha_k\mathcal{R}(u) + \frac{1}{2}\alpha_k(\gamma-\mu)\|x_k-u\|_{\X}^2 \quad\text{ by \Cref{assumptions}(5)}\\
        &=0,
    \end{align*}
hence $\A x_{+} = y^0$. It remains to prove that, for all $u\in \X$ such that $\A u = y^0$, 
\[
\R(u) \geqslant \R(x_+) + \frac12(\mu - \gamma)\|u - x_{+}\|_{\X}^2. 
\]
By the criticality of $x_k$, the non-negativity of $\D$, and \Cref{assumptions}(5):
\[
\alpha_k\mathcal{R}(x_k) \leqslant  \D(\A x_k,y^{\delta_k}) + \alpha_k\mathcal{R}(x_k) \leqslant C\delta_k^p + \alpha_k\mathcal{R}(u) + \frac{1}{2}\alpha_k(\gamma-\mu)\|x_k-u\|_{\X}^2,
\]
and so 
\begin{equation}
    \label{eq:Rineq}
    \mathcal{R}(x_k) \leqslant C\frac{\delta_k^p}{\alpha_k} + \mathcal{R}(u) + \frac{1}{2}(\gamma-\mu)\|x_k-u\|_{\X}^2.
\end{equation}
Therefore, if $\gamma \leqslant \mu$, 
\begin{align*}
\R(x_+) &\leqslant \liminf_k \R(x_k)  \quad\text{as $\R$ is weakly sequentially l.s.c.} \\
&\leqslant \liminf_k C\frac{\delta_k^p}{\alpha_k} + \mathcal{R}(u) + \frac{1}{2}(\gamma-\mu)\|x_k-u\|_{\X}^2 \quad \text{by \cref{eq:Rineq}}\\
&\leqslant  \mathcal{R}(u) + \frac{1}{2}(\gamma-\mu)\|x_+-u\|_{\X}^2, \quad\text{ since $\|\cdot\|_{\X}$ is weakly sequentially l.s.c.,}
\end{align*}
as desired.
In the case $\mu < \gamma < 2\mu$, $R_{sc} - \frac12 \mu \|\cdot\|_{\X}^2$ is weakly sequentially l.s.c., the proof runs:
    \begin{align*}
\R(x_+) - \frac12 \mu \|x_+-u\|_{\X}^2
& = \R(x_+) - \frac12 \mu \|x_+\| + \mu\langle x_+, u\rangle- \frac12 \mu\|u\|_{\X}^2 \\
&\leqslant \liminf_k \R(x_k) - \frac12 \mu \|x_k \|_{\X}^2 + \mu\langle x_k, u\rangle- \frac12 \mu\|u\|_{\X}^2\\
&= \liminf_k \R(x_k) - \frac12 \mu \|x_k - u \|_{\X}^2 \\
&\leqslant \liminf_k C\frac{\delta_k^p}{\alpha_k} + \mathcal{R}(u) + \frac{1}{2}(\gamma-2\mu)\|x_k-u\|_{\X}^2\\
&\leqslant  \mathcal{R}(u) + \frac{1}{2}(\gamma-2\mu)\|x_+-u\|_{\X}^2.
\end{align*}

Finally, whenever the solution to $\A u = y^0$ is unique, then by the above every subsequence of $x_k$ has a subsubsequence converging weakly to that $u$. It follows that $x_k$  converges  weakly to $u$, as if $x_k$ did not converge weakly to $u$, then there would be a neighbourhood $\mathcal{N}$ of $u$ (in the weak topology) and a subsequence $x'_\ell$ of $x_k$ such that $x'_\ell \notin \mathcal{N}$ for all $\ell$. This subsequence would have no subsubsequence converging weakly to $u$, contradicting the above. 
\end{proof}

\subsection{Counterexample to \Cref{thm:stability,thm:convreg} in the general weakly convex setting}\label{ap:infDcounterexample}

\begin{example}\label{ex:ctrexample}
Let: \begin{itemize}
    \item $\X:= \ell^2(\mathbb{N})$, i.e. $\X := \{ (a_n)_{n \in \mathbb{N}}: \sum_{n=1}^{\infty} a_n^2 < \infty \}$ with $\langle (a_n), (b_n) \rangle_{\X}:= \sum_{n=1}^\infty a_n b_n$,
    \item $\Y = \mathbb{R}$,
    \item $\A = 0$,
    \item $\D(y,z):= (y-z)^2$ (which can be checked to satisfy \Cref{assumptions}(4-5)) and so $\D(\A x, y) = y^2 $,
    \item $y_k = y^\delta = y^{\delta_k}= y^0 = 0$, and 
    \item $\R(x) := \begin{cases}
        f(x), &\|x\|_{\X} \leqslant \frac12,\\
        \left(\|x\|^2_{\X} - 1 \right)^2, &\|x\|_{\X} > \frac12,
    \end{cases}$ where $f$ is any smooth weakly convex function for which $0$ is not a critical point and for which every derivative (including the $0^\text{th}$) of $f$ at any $x$ with $\|x\|_{\X} = \frac12$ agrees with that of 
    $\left(\|x\|^2_{\X} - 1 \right)^2$. 
    \end{itemize}

It follows that $\R$ is weakly convex because there exists $\rho \geqslant 0$ such that $f+ \rho\|x\|_{\X}^2$ is convex, and $ \left(\|x\|^2_{\X} - 1 \right)^2 + 2\|x\|_{\X}^2 = \|x\|^4_{\X} + 1$ is convex since it is the composition of $t \mapsto t^4 + 1$ (which is a convex and monotone on $[0,\infty)$) with $\|\cdot\|_{\X}$ which is convex and has range $[0,\infty)$. Hence $\R(x) + \max\{\rho,2\}\|x\|_{\X}^2$ is convex. 
    
    Then for all $\alpha > 0$ and $y \in \Y$, $\partial \mathcal{J}_{\alpha,y}(x) = \alpha \partial\R(x)$. Hence, for all $\alpha >0$, $x_k \in \operatorname*{crit}_{x \in \X} \mathcal{J}_{\alpha,y_k}$ if and only if  $x_k \in \operatorname*{crit}_{x \in \X} \R$. But by construction $\{\|x\|_{\X}=1\} \subseteq \operatorname*{crit}_{x \in \X} \R$ and $0 \notin \operatorname*{crit}_{x \in \X} \R$. Therefore, we can choose $x_k = e_k$, i.e. the sequence which is 1 when $n =k$ and $0$ otherwise. Then it is a well-known fact that $e_k \rightharpoonup 0$, which by construction is not a critical point of $\mathcal{J}_{\alpha,y^\delta}$. Hence \Cref{thm:stability} does not hold. 

    For \Cref{thm:convreg}, every $x \in \X$ has $\A x = y^0$. It follows that $x_+$ is an $\R$-criticising solution if and only if $x_+$ is a critical point of $\R$. We can once again take $x_k = e_k \in \operatorname*{crit}_{x \in \X} \mathcal{J}_{\alpha_k,y^{\delta_k}}$, with weak limit 0, which is not a critical point of $\R$. Hence, \Cref{ex:ctrexample} is also a counterexample to \Cref{thm:convreg} in the general setting of $\R$ weak convex.

\begin{nb}
    The problematic critical points here can be chosen to indeed be global minimisers of $\R$, so this issue is not caused by the fact that we are in the setting of critical points.
\end{nb}
    \begin{nb}
         We could modify $\R$ to also be globally Lipschitz by setting $\R(x) = g(x)$ for $\|x\|_{\X} > 2$ where $g$ is any smooth, globally Lipschitz, weakly convex function whose derivatives all agree with those of $\left(\|x\|^2_{\X} - 1 \right)^2$ at $\|x\|_{\X} = 2$. 
    \end{nb}
   \begin{nb}
       This counterexample does not work in the finite-dimensional setting, as in that case the weak and norm topologies coincide, and hence $\{\|x\|_{\X}=1\}$ contains all its limit points, and so does not have 0 as a limit point.
   \end{nb}
\end{example}

\section{Proofs and definitions for \Cref{sec:Opt}}

\subsection{Proof of \Cref{thm:descent}}\label{ap:descent}
\begin{proof}
Written this way, we have by weak convexity-concavity of $L(x, y)$, assuming that $F^*$ is $\scv{\mu}$-strongly convex, that
\begin{align}
L\left(x^{k+1}, y\right)-L\left(x, y^{k+1}\right)
&=L\left(x^{k+1},y\right)-L\left(x^{k+1}, y^{k+1}\right)+L\left(x^{k+1}, y^{k+1}\right)-L\left(x, y^{k+1}\right) \nonumber\\
&\leqslant \left\langle Tz^{k+1}, z^{k+1}-z\right\rangle_{\X\times\Y} + \frac{\wcv{\rho}}{2}\|x^{k+1}-x\|_{\X}^2 - \frac{\scv{\mu}}{2}\|y^{k+1}-y\|_{\Y}^2 \nonumber\\
&=\left\langle z^k-z^{k+1}, z^{k+1}-z\right\rangle_M  + \frac{\wcv{\rho}}{2}\|x^{k+1}-x\|_{\X}^2 - \frac{\scv{\mu}}{2}\|y^{k+1}-y\|_{\Y}^2     \label{eq:k_gap}\\
&= \frac{1}{2}\left\|z^k-z\right\|_M^2-\frac{1}{2}\left\|z^{k+1}-z\right\|_M^2-\frac{1}{2}\left\|z^k-z^{k+1}\right\|_M^2 
+ \frac{\wcv{\rho}}{2}\|x^{k+1}-x\|_{\X}^2 - \frac{\scv{\mu}}{2}\|y^{k+1}-y\|_{\Y}^2. \nonumber%
\end{align}
But then adding together the inequality above for $k$ and $k+1$ evaluated at $z=(x^{k},y^{k+1})$ 
\begin{align*}
    L\left(x^{k+1}, y^{k+1}\right)-L&\left(x^{k}, y^{k}\right)\\
    &= L\left(x^{k+1},y^{k+1}\right)-L\left(x^{k}, y^{k+1}\right)+L\left(x^{k}, y^{k+1}\right)-L\left(x^{k}, y^{k}\right) \\   
    &\leqslant \frac{1}{2}\left\|y^k-y^{k+1}\right\|_M^2-\frac{1}{2}\left\|x^{k+1}-x^k\right\|_M^2-\frac{1}{2}\left\|z^k-z^{k+1}\right\|_M^2 + \\ &\quad-\frac{1}{2}\left\|z^{k-1}-z^{k}\right\|_M^2 + \frac{\wcv{\rho}}{2}\|x^{k+1}-x^k\|_{\X}^2 -\frac{\scv{\mu}}{2}\|y^{k+1}-y^k\|_{\Y}^2 + \frac{1}{2}\left\|\left(\begin{array}{c}x^{k-1}-x^{k} \\ y^{k-1}-y^{k+1}\end{array}\right)\right\|_M^2\\
    &= \left\|y^k-y^{k+1}\right\|_M^2-\frac{1}{2}\left\|x^{k}-x^{k+1}\right\|_M^2-\frac{1}{2}\left\|z^k-z^{k+1}\right\|_M^2 + \frac{\wcv{\rho}}{2}\|x^{k}-x^{k+1}\|_{\X}^2 \\&\quad-\frac{\scv{\mu}}{2}\|y^{k+1}-y^k\|_{\Y}^2 +\left\langle\left(\begin{array}{c}x^{k-1}-x^{k} \\ y^{k-1}-y^{k}\end{array}\right),\left(\begin{array}{c}0 \\ y^{k}-y^{k+1}\end{array}\right)\right\rangle_M %
    \\
     &\leqslant \frac{3}{2}\left\|y^k-y^{k+1}\right\|_M^2-\frac{1}{2}\left\|x^{k+1}-x^k\right\|_M^2-\frac{1}{2}\left\|z^k-z^{k+1}\right\|_M^2 + 
     \\ &\quad
     +\frac{1}{2}\left\|z^{k-1}-z^{k}\right\|_M^2 + \frac{\wcv{\rho}}{2}\|x^{k+1}-x^k\|_{\X}^2 - \frac{\scv{\mu}}{2}\|y^{k+1}-y^k\|_{\Y}^2.
\end{align*}
We can write this illustrating descent:
\begin{align*}
    L\left(x^{k}, y^{k}\right) +\frac{1}{2}\left\|z^{k-1}-z^{k}\right\|_M^2
    \geqslant %
    L\left(x^{k+1}, y^{k+1}\right)&+
    \frac{1}{2}  \left\|z^k-z^{k+1}\right\|_M^2  
    \\
    &+\left[\frac{1}{2}(\scv{\mu}\sigma-3)\left\|y^{k}-y^{k+1}\right\|_{M}^2 +\frac{1}{2}(1-\wcv{\rho}\tau)\left\|x^{k+1}-x^k\right\|_{M}^2 \right].
\end{align*}
This becomes descent for the following restrictions on the parameters:
\begin{equation*}
\begin{cases}
    \tau\sigma<\frac{1}{\|\A\|^2},\\
    \tau\wcv{\rho}<1,\\
    \mu\scv{\sigma}>3.
\end{cases}    
\end{equation*}
\end{proof}
\subsection{Proof of \Cref{thm:pd_results}}\label{as:pd_results}
\begin{proof}
In analogy with \cite{bolte2014proximal}, we shall show require two results - one on descent, as in \Cref{thm:descent}, and one on subgradient boundedness, which arises from the following:
\begin{align*}
\left\|\partial \mathcal{L}(z^{k+1},z^{k})\right\| &= \left\|\left(\begin{array}{c}\left(\begin{array}{c} \partial_x L(z^{k+1}) \\ \partial_y L(z^{k+1}) \end{array}\right) + M[z^{k+1}-z^k] \\ -M[z^{k+1}-z^k]\end{array}\right)\right\| \\
&\leqslant \left\|\left(\begin{array}{c}\left(\begin{array}{c} \partial_x L(z^{k+1}) \\ \partial_y L(z^{k+1}) \end{array}\right)  \\ 0\end{array}\right)\right\| + \left\|\left(\begin{array}{c} M[z^{k+1}-z^k] \\ -M[z^{k+1}-z^k]\end{array}\right)\right\| \\ &= 3\|M(z^{k+1}-z^k)\|   .
\end{align*}
Overall, the proof is similar to that of \cite{guo2023preconditioned} and \cite{bolte2014proximal}, and only the first part will be shown here, for compactness. 
Now, from \Cref{thm:descent}:
    \begin{equation*}
      \sum_{k=1}^{K}\left[\frac{1}{2}(\scv{\mu}\sigma-3)\left\|y^{k}-y^{k+1}\right\|_{M}^2 +\frac{1}{2}(1-\wcv{\rho}\tau)\left\|x^{k+1}-x^k\right\|_{M}^2 \right] \leqslant \mathcal{L}(z^1,z^0) - \mathcal{L}(z^{K+1},z^{K}).
    \end{equation*}
But by assumptions above $\inf _k L(x_k,y_k) > -\infty$ and by boundedness $\mathcal{L}^k > -\infty$, we have $\mathcal{L}^k$ is non-increasing and thus converges to $\mathcal{L}^*$. Thus, taking $K\to\infty$: 
\begin{equation*}
    \sum_{k=1}^{\infty}\left[\frac{1}{2}(\scv{\mu}\sigma-3)\left\|y^{k}-y^{k+1}\right\|_{M}^2 +\frac{1}{2}(1-\wcv{\rho}\tau)\left\|x^{k+1}-x^k\right\|_{M}^2 \right] \leqslant \mathcal{L}^0 - \mathcal{L}^*.
    \end{equation*}
This, in particular, implies that the series is square-summable and furthermore that 
\[
\lim _{k \rightarrow \infty}\left\|x^{k+1}-x^k\right\|_{\X}=0 \text { and } \lim _{k \rightarrow \infty}\left\|y^{k+1}-y^k\right\|_{\Y}=0.
\]
Recalling that $\operatorname{dist}\left(0, \partial L(z_k)\right) = \operatorname{dist}\left(0, T(z_k)\right) = \left\|z^{k+1}-z^{k}\right\|_M \leqslant \left\|y^{k+1}-y^{k}\right\|_M + \left\|x^{k+1}-x^{k}\right\|_M\to 0$ as $k\to\infty$, and for $\nu = \min\{\scv{\mu}\sigma-3,1-\wcv{\rho}\tau\}$:
\begin{equation*}
    \frac{\nu}{4}\left\|z^{k+1}-z^{k}\right\|^2_M\leqslant\left[\frac{1}{2}(\scv{\mu}\sigma-3)\left\|y^{k}-y^{k+1}\right\|_{M}^2 +\frac{1}{2}(1-\wcv{\rho}\tau)\left\|x^{k+1}-x^k\right\|_{M}^2 \right].
\end{equation*}
But also from above we find that 
\begin{equation*}
    \min _{k} \left[\frac{1}{2}(\scv{\mu}\sigma-3)\left\|y^{k}-y^{k+1}\right\|_{M}^2 +\frac{1}{2}(1-\wcv{\rho}\tau)\left\|x^{k+1}-x^k\right\|_{M}^2 \right] \leqslant \frac{1}{K}\left(\mathcal{L}(z^1,z^0) - \mathcal{L}(z^{K+1},z^{K})\right).
\end{equation*}
Which we can combine to find: 
\[
\min _{k} \operatorname{dist}\left(0, \partial L(z^k)\right) \leqslant \frac{2}{(\nu K)^{1 / 2}} \sqrt{\mathcal{L}(z^1,z^0) - \mathcal{L}(z^{K+1},z^{K})}.
\]
\end{proof}

\subsection{The Kurdyka--\L ojasiewicz inequality}
\label{ap:KL}

In \cite{kurdyka1998gradients} Kurdyka provided a generalisation of the \L ojasiewicz inequality, with extensions to the nonsmooth setting in \cite{bolte2007lojasiewicz}. %
\begin{definition}
Let $\eta \in(0,+\infty]$. Denote with $\Phi_\eta$ the set of all concave and continuous functions $\varphi:[0, \eta) \rightarrow[0,+\infty)$ which satisfy: $\varphi(0)=0$; $\varphi$ is $\mathcal{C}^1$ on $(0, \eta)$ and continuous at 0; and for all $s \in(0, \eta)$, $\varphi^{\prime}(s)>0$.
\end{definition}
\begin{definition}
Let $\Psi: \mathbb{R}^d \rightarrow \mathbb{R} \cup\{+\infty\}$ be proper and lower semicontinuous (l.s.c.). Then $\Psi$ is said to have the \textit{Kurdyka--Łojasiewicz (K\L) property} at a point $\hat{v} \in \operatorname{dom} \partial \Psi:=$ $\left\{v \in \mathbb{R}^d: \partial \Psi(v) \neq \varnothing\right\}$, if there exists $\eta \in(0,+\infty]$, a neighborhood $V$ of $\hat{v}$, and a function $\varphi \in \Phi_\eta$ such that for any
$$
v \in V \cap\left\{v \in \X: \Psi(\widehat{v})<\Psi(v)<\Psi(\widehat{v})+\eta\right\}
$$
the Kurdyka--Łojasiewicz inequality holds
$$
\varphi^{\prime}(\Psi(v)-\Psi(\widehat{v})) \cdot \operatorname{dist}(\mathbf{0}, \partial \Psi(v)) \geqslant 1 .
$$
If $\varphi(s)=c s^{1-\theta}$ for $c > 0$ and $\theta \in [0,1)$, then $\theta$ is called the K{\L} exponent of $\Psi$ at $\hat v$. If $\Psi$ has the KŁ property at each point of $\operatorname{dom} \partial \Psi$, then $\Psi$ is called a KŁ function (or just, K{\L}).  
\end{definition}
Examples of K{\L} functions include semialgebraic, subanalytic, uniformly convex functions (see \cite{Attouch2010} and the references therein) and all typical neural networks.
\begin{theorem}\label{thm:NNKL}
    Let $\Psi:\mathbb{R}^d \rightarrow \mathbb{R}$ be a deep neural network with every activation function a continuous piecewise analytic function with finitely many pieces (e.g., \texttt{ReLU}, \texttt{sigmoid}). Then $\Psi$ is a K{\L} function and for all $\hat v \in \operatorname{dom}\partial\Psi$, there exists $\theta \in [0,1)$ such that $\Psi$ has K{\L} exponent $\theta$ at $\hat v$.
\end{theorem}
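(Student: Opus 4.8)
The plan is to exploit that the Kurdyka--\L ojasiewicz property at a point is a \emph{local} property, so it suffices to verify it on a bounded neighbourhood of $\hat v$, where $\Psi$ coincides with a subanalytic function; the conclusion, including a K{\L} exponent in $[0,1)$, then follows from the nonsmooth \L ojasiewicz inequality for subanalytic functions \cite{bolte2007lojasiewicz} (the result already invoked in the note immediately preceding this theorem).

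First I would record the elementary building blocks. The affine maps implemented by the linear layers are analytic, hence globally subanalytic. Each scalar activation $\phi:\mathbb{R}\to\mathbb{R}$ is continuous and analytic on each of finitely many intervals partitioning $\mathbb{R}$, so on any bounded interval its graph is a finite union of graphs of restricted analytic functions and is therefore subanalytic (finite unions of subanalytic sets being subanalytic). Being analytic on each piece and continuous across the finitely many breakpoints, $\phi$ is also locally Lipschitz, so $\Psi$, as a composition of such maps with affine maps, is locally Lipschitz and in particular continuous on all of $\mathbb{R}^d$.

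Next comes the composition step, which I expect to be the main obstacle: subanalyticity is \emph{not} preserved under composition on unbounded domains. I would resolve this by localising. Fix $\hat v\in\operatorname{dom}\partial\Psi$ and a bounded open box $U\ni\hat v$, and write $\Psi=\ell_L\circ\phi_{L-1}\circ\ell_{L-1}\circ\cdots\circ\phi_1\circ\ell_1$ with the $\ell_i$ affine and the $\phi_i$ the componentwise activations. By continuity there are bounded boxes $B_1,\dots,B_{L-1}$ such that the $i$-th partial composition maps $U$ into $B_i$. On a bounded box every $\ell_i$ and every $\phi_i$ agrees with a globally subanalytic map (a restricted analytic map on each of the finitely many relevant pieces), and the class of globally subanalytic maps is closed under composition; hence $\Psi|_U$ is globally subanalytic. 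Equivalently, $\Psi|_U$ is definable in the o-minimal structure $\mathbb{R}_{\mathrm{an}}$, which serves just as well for what follows.

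Finally I would conclude. The restriction $\Psi|_U$ is a continuous --- hence proper and l.s.c. --- subanalytic function on the bounded open set $U$, so by \cite{bolte2007lojasiewicz} it satisfies the K{\L} inequality at $\hat v$ with a desingularising function of the form $\varphi(s)=c\,s^{1-\theta}$ for some $c>0$ and $\theta\in[0,1)$. Since $\Psi$ and $\Psi|_U$ coincide on a neighbourhood of $\hat v$, and both the subdifferential and the K{\L} inequality are local, $\Psi$ itself has K{\L} exponent $\theta\in[0,1)$ at $\hat v$. As $\hat v$ was an arbitrary point of $\operatorname{dom}\partial\Psi$, $\Psi$ is a K{\L} function.
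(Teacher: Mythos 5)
Your proposal is correct and follows essentially the same route as the paper: establish that $\Psi$ is subanalytic and then invoke Theorem 3.1 of \cite{bolte2007lojasiewicz} to obtain the K{\L} property with exponent $\theta\in[0,1)$. The only difference is that you justify subanalyticity by localising to a bounded box and using closure of globally subanalytic maps under composition, whereas the paper argues that a finite composition of continuous piecewise analytic functions with finitely many pieces is again of that form (and hence subanalytic); your version is somewhat more careful about the composition subtlety, but the substance is the same.
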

\begin{proof}
    The network $\Psi$ is a finite composition of continuous piecewise analytic functions with finitely many pieces, and hence is a continuous piecewise analytic function with finitely many pieces. It follows that $\Psi$ is subanalytic and so the result follows by Theorem 3.1 of \cite{bolte2007lojasiewicz}.
\end{proof}

\subsection{Proof of \Cref{thm:ergodic_rate}}\label{ap:ergodic}
Before providing the proof, we are first going to provide two lemmas:

\begin{lemma}\label{lem:avgconvex}
    For $f: \X\rightarrow \mathbb{R}$ $\rho$-convex, $(x^i)\in \X$, $\bar{x}^i:=\frac{1}{i}\sum_{j=1}^i x^j$, and all $N\in\mathbb{N}$,
    \begin{equation*}
        f(\bar{x}^N) \leqslant \bar{f} - \frac{{\rho}}{2N}\sum_{i=1}^N\frac{i-1}{i}\|x^i-\bar{x}^{i-1}\|_{\X}^2,
    \end{equation*}
    and furthermore emphasising the weak convex case, for any $x\in\X$:
    \begin{align*}
        f(\bar{x}^N)\leqslant \bar{f}&+\max{(0,-\rho)}\cdot %
        \left[\frac{1}{N}\sum_{i=1}^N\|x^{i}-x\|_{\X}^2 + \frac{1}{N}\sum_{i=1}^N\frac{1}{i}\sum^{i-1}_{j=1}\|x^j-x\|_{\X}^2 \right].%
    \end{align*}
\end{lemma}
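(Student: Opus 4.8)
The plan is to prove Lemma~\ref{lem:avgconvex} directly from the two equivalent characterisations of $\rho$-convexity recorded in Appendix~\ref{ap:convanalysis}, namely the Jensen-type inequality and the subgradient inequality, applied inductively to the running averages $\bar x^i$. First I would set up the induction on $N$. The base case $N=1$ is trivial since $\bar x^1 = x^1$ and both sums are empty. For the inductive step, I would observe that $\bar x^{N} = \frac{N-1}{N}\bar x^{N-1} + \frac1N x^{N}$, i.e. $\bar x^N$ is a convex combination of $\bar x^{N-1}$ and $x^N$ with weight $\lambda = \frac1N$ on $x^N$. Applying the strengthened Jensen inequality (item 1 in Appendix~\ref{ap:convanalysis}) with $x_1 = x^N$, $x_2 = \bar x^{N-1}$, $\lambda = \frac1N$ gives
\[
f(\bar x^N) \leqslant \tfrac1N f(x^N) + \tfrac{N-1}{N} f(\bar x^{N-1}) - \tfrac{\rho}{2}\cdot\tfrac{1}{N}\cdot\tfrac{N-1}{N}\|x^N - \bar x^{N-1}\|_{\X}^2.
\]
Then I would substitute the inductive hypothesis $f(\bar x^{N-1}) \leqslant \bar f^{N-1} - \frac{\rho}{2(N-1)}\sum_{i=1}^{N-1}\frac{i-1}{i}\|x^i-\bar x^{i-1}\|_{\X}^2$, note that $\frac{N-1}{N}\bar f^{N-1} + \frac1N f(x^N) = \bar f^{N}$ by definition of the mean, and check that $\frac{N-1}{N}\cdot\frac{1}{N-1} = \frac1N$, so the accumulated penalty terms telescope into $\frac{\rho}{2N}\sum_{i=1}^{N}\frac{i-1}{i}\|x^i - \bar x^{i-1}\|_{\X}^2$ (the new $i=N$ term matching exactly the $-\frac{\rho}{2}\cdot\frac{N-1}{N^2}\|x^N-\bar x^{N-1}\|_{\X}^2$ produced above). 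This closes the first inequality.

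For the second inequality, which is the form actually used downstream in the weak convex case, I would first note that it is vacuous (indeed weaker than the first) when $\rho \geqslant 0$, so assume $\rho < 0$ and write $\max(0,-\rho) = -\rho = |\rho|$. Starting from the first inequality, I would bound $\frac{i-1}{i}\|x^i - \bar x^{i-1}\|_{\X}^2$ from above. The natural move is the identity $\|x^i - \bar x^{i-1}\|_{\X}^2 \leqslant 2\|x^i - x\|_{\X}^2 + 2\|\bar x^{i-1} - x\|_{\X}^2$ for the arbitrary reference point $x$, followed by convexity of the squared norm to get $\|\bar x^{i-1} - x\|_{\X}^2 = \|\frac{1}{i-1}\sum_{j=1}^{i-1}(x^j - x)\|_{\X}^2 \leqslant \frac{1}{i-1}\sum_{j=1}^{i-1}\|x^j - x\|_{\X}^2$. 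Dropping the factor $\frac{i-1}{i}\leqslant 1$ on the first piece and combining $\frac{i-1}{i}\cdot\frac{1}{i-1} = \frac1i$ on the second piece, and absorbing the factor of $2$ against the $\frac12$ in front, yields exactly the claimed bound $\bar f^N + |\rho|\big[\frac1N\sum_{i=1}^N\|x^i-x\|_{\X}^2 + \frac1N\sum_{i=1}^N\frac1i\sum_{j=1}^{i-1}\|x^j-x\|_{\X}^2\big]$.

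I do not anticipate a serious obstacle here; this lemma is essentially bookkeeping around the averaging operation, and the only place to be careful is getting the weights in the telescoping sum exactly right (the $\frac{i-1}{i}$ factors and the $\frac1N$ normalisation) and making sure the $2\|\cdot\|^2 + 2\|\cdot\|^2$ split is combined with the convexity-of-squared-norm estimate in the right order so the indices on the double sum come out as $\sum_{i=1}^N\frac1i\sum_{j=1}^{i-1}$ rather than something shifted. The mild subtlety is that $f$ is only assumed real-valued ($f:\X\to\mathbb{R}$) so there are no domain issues and every $f(\bar x^i)$ and $f(x^j)$ is finite, which is what licenses rearranging the inequality; I would remark on this in passing. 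Everything else follows from the two characterisations of $\rho$-convexity that the appendix has already established for locally Lipschitz $f$.
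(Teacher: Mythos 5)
Your proposal is correct and follows essentially the same route as the paper's proof: the first inequality is obtained by iterating the strengthened Jensen inequality for $\rho$-convex functions on the decomposition $\bar{x}^N = \frac{N-1}{N}\bar{x}^{N-1} + \frac{1}{N}x^N$ (the paper unrolls the recursion as a chain of inequalities rather than a formal induction, but the computation is identical), and the second is obtained exactly as you describe, via $\|x^i-\bar{x}^{i-1}\|_{\X}^2 \leqslant 2\|x^i-x\|_{\X}^2 + 2\|\bar{x}^{i-1}-x\|_{\X}^2$ followed by Jensen's inequality on the squared norm of the average. The weight bookkeeping you flag as the only delicate point checks out, so nothing further is needed.
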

\begin{proof}
This arises from the following fact for $\rho$-convex functions: 
    \begin{equation}\begin{split}
        f\left(\frac{1}{N}\left(x_1+\dots+x_N\right)\right) &\leqslant \frac{1}{N}f(x_N) - \frac{\rho}{2}\frac{1}{N}\left(1-\frac{1}{N}\right)\|x_n-\bar{x}_{n-1}\|_{\X}^2 \\& \qquad\qquad + \frac{N-1}{N}f\left(\frac{1}{N-1}\left(x_1+\dots+x_{N-1}\right)\right) \\&\leqslant\dots\leqslant \bar{f} - \frac{\rho}{2N}\sum_{i=1}^k\frac{i-1}{i}\|x_i-\bar{x}_{i-1}\|_{\X}^2.
        \label{eq:lemavgfirstresult}
        \end{split}
    \end{equation}
To arrive at the second result we wish to expand the last term in terms of iterate lengths:
\begin{align*}
\sum_{i=1}^N\frac{i-1}{i}\|x_i-\bar{x}_{i-1}\|_{\X}^2 &\leqslant 2\sum_{i=1}^N\frac{i-1}{i}\left(\|x_i-x\|_{\X}^2 + \|x-\bar{x}_{i-1}\|_{\X}^2\right)\\
&\leqslant 2\sum_{i=1}^N\frac{i-1}{i}\left(\|x_i-x\|_{\X}^2 + \left\|\frac{1}{i-1}\sum_{j=1}^{i-1} \left(x_{j}-x\right)\right\|_{\X}^2\right)\\
\text{By Jensen's inequality: } & \leqslant 2\sum_{i=1}^N\frac{i-1}{i}\left(\|x_i-x\|_{\X}^2 + \frac{1}{i-1}\sum^{i-1}_{j=1}\|x_j-x\|_{\X}^2 \right).
\end{align*}
Plugging this into \Cref{eq:lemavgfirstresult} yields the desired result.
\end{proof}
And now by application of \Cref{lem:avgconvex} to $L$:
\begin{lemma}\label{thm:ergodic}
Let $\bar{z}^k:=\left(\bar{x}^k, \bar{y}^k\right):=\frac{1}{k} \sum_{i=1}^k z^i$ be the average iterate. Assuming that $L(x,y)$ is $\rho$-convex in the first argument and $\mu$-concave in the second, for any $k \geqslant 1$ and z:
\begin{align*}
L\left(\bar{x}^k, y\right)-L\left(x, \bar{y}^k\right) &  \leqslant \frac{\left\|z-z^0\right\|_M^2}{2 k} - \frac{1}{2k}\sum_{i=1}^k\|z^{i-1}-z^{i}\|_M^2
    \\ &\quad -\frac{\wcv{\rho}}{2k}\sum_{i=1}^k\|x^{i}-x\|_{\X}^2 - \frac{\wcv{\rho}}{2k}\sum_{i=1}^k\frac{i-1}{i}\|x^i-\bar{x}^{i-1}\|_{\X}^2 
    \\ &\quad -\frac{\scv{\mu}}{2k}\sum_{i=1}^k\|y^{i}-y\|_{\Y}^2 - \frac{\scv{\mu}}{2k}\sum_{i=1}^k\frac{i-1}{i}\|y^{i}-\bar{y}^{i-1}\|_{\Y}^2.\\
\end{align*}
Further, setting $D=\operatorname{diag}\left(\max{(0,-\rho)},\max{(0,-\mu)}\right)$:
\begin{equation*}
        L\left(\bar{x}^k, y\right)-L\left(x, \bar{y}^k\right) \leqslant \frac{\left\|z-z^0\right\|_M^2}{2 k} + \frac{3}{2k}\sum_{i=1}^k\|z^{i}-z\|_D^2 + \frac{1}{k}\sum_{i=1}^k\frac{1}{i}\sum^{i-1}_{j=1}\|z^j-z\|_D^2.
\end{equation*}
\end{lemma}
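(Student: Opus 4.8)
The plan is to chain three ingredients: the per-iterate gap estimate behind \Cref{eq:k_gap}, a telescoping sum, and the averaging inequality \Cref{lem:avgconvex} applied separately to the two marginals of $L$.

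First I would re-index \Cref{eq:k_gap}, shifting $k+1 \mapsto i$ (so $z^k \mapsto z^{i-1}$), and expand the $M$-inner product via the polarisation identity $\langle z^{i-1}-z^i,\, z^i-z\rangle_M = \tfrac12\|z^{i-1}-z\|_M^2 - \tfrac12\|z^i-z\|_M^2 - \tfrac12\|z^{i-1}-z^i\|_M^2$ (exactly the step already performed in the proof of \Cref{thm:descent}). Using the subgradient characterisation of $\rho$-convexity (see \Cref{ap:convanalysis}) for $L(\cdot,y)$ and of $\mu$-convexity for $-L(x,\cdot)$, this gives, for every $i\geqslant 1$ and every fixed $z=(x,y)$,
\[
L(x^i,y)-L(x,y^i) \leqslant \tfrac12\|z^{i-1}-z\|_M^2 - \tfrac12\|z^i-z\|_M^2 - \tfrac12\|z^{i-1}-z^i\|_M^2 - \tfrac{\rho}{2}\|x^i-x\|_{\X}^2 - \tfrac{\mu}{2}\|y^i-y\|_{\Y}^2.
\]
Summing over $i=1,\dots,k$ telescopes the first two terms into $\tfrac12\|z^0-z\|_M^2 - \tfrac12\|z^k-z\|_M^2 \leqslant \tfrac12\|z^0-z\|_M^2$, and dividing by $k$ yields an upper bound on $\tfrac1k\sum_i L(x^i,y) - \tfrac1k\sum_i L(x,y^i)$.

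Next I would replace the arithmetic means of the objective values by the objective of the means. Applying the first conclusion of \Cref{lem:avgconvex} to $f=L(\cdot,y)$ (which is $\rho$-convex) gives $L(\bar{x}^k,y)\leqslant \tfrac1k\sum_i L(x^i,y) - \tfrac{\rho}{2k}\sum_i\tfrac{i-1}{i}\|x^i-\bar{x}^{i-1}\|_{\X}^2$, and to $g=-L(x,\cdot)$ (which is $\mu$-convex, since $L(x,\cdot)$ is $\mu$-concave) gives $-L(x,\bar{y}^k)\leqslant -\tfrac1k\sum_i L(x,y^i) - \tfrac{\mu}{2k}\sum_i\tfrac{i-1}{i}\|y^i-\bar{y}^{i-1}\|_{\Y}^2$. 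Adding these two and feeding the result into the telescoped bound of the previous step produces precisely the first displayed inequality of the lemma, with the $-\tfrac12\|z^{i-1}-z^i\|_M^2$ terms carried through unchanged.

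For the second displayed inequality I would instead invoke the second conclusion of \Cref{lem:avgconvex}, taking the free point in that bound to be the fixed $x$ (resp.\ $y$); this replaces the negative $-\tfrac{\rho}{2k}\sum_i\tfrac{i-1}{i}\|x^i-\bar x^{i-1}\|_{\X}^2$ term by the nonnegative $\max(0,-\rho)\big[\tfrac1k\sum_i\|x^i-x\|_{\X}^2 + \tfrac1k\sum_i\tfrac1i\sum_{j=1}^{i-1}\|x^j-x\|_{\X}^2\big]$, and symmetrically for the $y$-marginal with $\max(0,-\mu)$. Rewriting everything through the seminorm $\|z^i-z\|_D^2 = \max(0,-\rho)\|x^i-x\|_{\X}^2 + \max(0,-\mu)\|y^i-y\|_{\Y}^2$, and separately bounding the leftover descent terms by $-\tfrac{\rho}{2k}\sum_i\|x^i-x\|_{\X}^2 - \tfrac{\mu}{2k}\sum_i\|y^i-y\|_{\Y}^2 \leqslant \tfrac1{2k}\sum_i\|z^i-z\|_D^2$, one collects a total of $\tfrac{3}{2k}\sum_i\|z^i-z\|_D^2 + \tfrac1k\sum_i\tfrac1i\sum_{j=1}^{i-1}\|z^j-z\|_D^2$; dropping the nonpositive $-\tfrac1{2k}\sum_i\|z^{i-1}-z^i\|_M^2$ term then gives the stated estimate. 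I expect the only genuine obstacle to be sign bookkeeping — keeping the $\rho$-convexity/weak-convexity convention consistent between \Cref{eq:k_gap} and both conclusions of \Cref{lem:avgconvex} — since no idea beyond the telescoping-plus-averaging template is required.
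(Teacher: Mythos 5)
Your proposal is correct and follows essentially the same route as the paper's proof: both combine the per-iterate gap estimate \cref{eq:k_gap} (polarised and telescoped in $\|\cdot\|_M$) with the two parts of \Cref{lem:avgconvex} applied to $L(\cdot,y)$ and $-L(x,\cdot)$, and the $\tfrac12+1=\tfrac32$ bookkeeping for the $\|z^i-z\|_D^2$ terms matches the paper exactly. The only difference is that you telescope first and average second while the paper does the reverse, which is immaterial.
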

\begin{proof}
We can expand the original as: 
\begin{equation*}
    L\left(\bar{x}^k, y\right)-L\left(x, \bar{y}^k\right) = \left(L\left(\bar{x}^k, y\right)-L\left(x, y\right)\right) + \left(L\left(x, y\right)-L\left(x, \bar{y}^k\right)\right)
\end{equation*}
Noting that the first bracket is convex in $\bar{x}^k$ and second is convex in $\bar{y}^k$, we can use \Cref{lem:avgconvex} to bound:
\begin{align*}
    L\left(\bar{x}^k, y\right)-L\left(x, \bar{y}^k\right)&=\left(L\left(\bar{x}^k, y\right)-L\left(x, y\right)\right) + \left(L\left(x, y\right)-L\left(x, \bar{y}^k\right)\right)\\
    &\leqslant\frac{1}{k} \sum_{i=1}^{k} L\left(x^{i}, y\right)-L\left(x, y^{i}\right) 
    \\ &\qquad -\frac{\wcv{\rho}}{2k}\sum_{i=1}^k\|x^{i}-x\|_{\X}^2 - \frac{\wcv{\rho}}{2k}\sum_{i=1}^k\frac{i-1}{i}\|x^i-\bar{x}^{i-1}\|_{\X}^2 
    \\ &\qquad -\frac{\scv{\mu}}{2k}\sum_{i=1}^k\|y^{i}-y\|_{\Y}^2 - \frac{\scv{\mu}}{2k}\sum_{i=1}^k\frac{i-1}{i}\|y^{i}-\bar{y}^{i-1}\|_{\Y}^2\\
    \text{by \Cref{eq:k_gap}}&\leqslant  \frac{\left\|z-z^0\right\|_M^2}{2 k} - \frac{1}{2k}\sum_{i=1}^k\|z^{i-1}-z^{i}\|_M^2
    \\ &\qquad -\frac{\wcv{\rho}}{2k}\sum_{i=1}^k\|x^{i}-x\|_{\X}^2 - \frac{\wcv{\rho}}{2k}\sum_{i=1}^k\frac{i-1}{i}\|x^i-\bar{x}^{i-1}\|_{\X}^2 
    \\ &\qquad -\frac{\scv{\mu}}{2k}\sum_{i=1}^k\|y^{i}-y\|_{\Y}^2 - \frac{\scv{\mu}}{2k}\sum_{i=1}^k\frac{i-1}{i}\|y^{i}-\bar{y}^{i-1}\|_{\Y}^2
\end{align*}
Using the second part of \Cref{lem:avgconvex}, we can arrive at the second result: 
\begin{align*}
    L\left(\bar{x}^k, y\right)-L\left(x, \bar{y}^k\right)&\leqslant  \frac{\left\|z-z^0\right\|_M^2}{2 k} -\frac{\wcv{\rho}}{2k}\sum_{i=1}^k\|x^{i}-x\|_{\X}^2 -\frac{\scv{\mu}}{2k}\sum_{i=1}^k\|y^{i}-y\|_{\Y}^2
     \\& \qquad+ \max{(0,-\rho)}\cdot\left[\frac{1}{k}\sum_{i=1}^k\|x^{i}-x\|_{\X}^2 + \frac{1}{k}\sum_{i=1}^k\frac{1}{i}\sum^{i-1}_{j=1}\|x^j-x\|_{\X}^2 \right]
     \\&\qquad+ \max{(0,-\mu)}\cdot\left[\frac{1}{k}\sum_{i=1}^k\|y^{i}-y\|_{\Y}^2 + \frac{1}{k}\sum_{i=1}^k\frac{1}{i}\sum^{i-1}_{j=1}\|y^j-y\|_{\Y}^2 \right]
\end{align*}
Yielding the desired result.
\end{proof}

Now, with the lemmas above we can prove \Cref{thm:ergodic_rate}.
\begin{proof}[Proof of \Cref{thm:ergodic_rate}]
Using $\scv{\mu}$-strong concavity in the second argument and $\wcv{\rho}$-weak convexity in the first, and using that $\hat{z}$ is a stationary point of $L$, we can bound:
\begin{equation*}
    L\left(\bar{x}^k, y\right)-L\left(x, \bar{y}^k\right) \leqslant -\frac{\mu}{2}\|y-\hat{y}\|_{\Y}^2+\frac{\rho}{2}\|x-\hat{x}\|_{\X}^2 + L\left(\bar{x}^k, \hat{y}\right)-L\left(\hat{x}, \bar{y}^k\right).
\end{equation*}
Now, from the second part of \Cref{thm:ergodic} we have 
\begin{equation*}
    L\left(\bar{x}^k, \hat{y}\right)-L\left(\hat{x}, \bar{y}^k\right) \leqslant \frac{\left\|\hat{z}-z^0\right\|_M^2}{2 k}  +\frac{3}{2k}\sum_{i=1}^k\|z^{i}-\hat{z}\|_D^2 + \frac{1}{k}\sum_{i=1}^k\frac{1}{i}\sum^{i-1}_{j=1}\|z^j-\hat{z}\|_D^2,
\end{equation*}
while from \Cref{thm:KLconvergence} (by redefining the constants appropriately), then we have constants $\nu>0,0<\tau<1$, such that
$$
\left\|z^k-\hat{z}\right\|_D \leqslant \nu \tau^{k},
$$
which in turn implies the following two bounds:
\begin{equation*}
\frac{1}{k}\sum_{i=1}^k\|z^{i}-\hat{z}\|_D^2 \leqslant \frac{1}{k}\nu^2\sum_{i=1}^k \tau^{2k} = \frac{1}{k}\nu^2\tau^2\frac{1-\tau^{2k}}{1-\tau^2} = \mathcal{O}\left(\frac{1}{k}\right)
\end{equation*}
and
\begin{align*}
\frac{1}{k}\sum_{i=1}^k\frac{1}{i}\sum^{i-1}_{j=1}\|z^j-\hat{z}\|_D^2 &\leqslant \frac{1}{k}\nu^2\tau^2 \sum_{i=1}^k\frac{1}{i}\cdot\frac{1-\tau^{2(i-1)}}{1-\tau^2} \\&\leqslant \frac{1}{k}\frac{\nu^2\tau^2}{1-\tau^2} \sum_{i=1}^k\frac{1}{i} \leqslant \frac{\nu^2\tau^2}{1-\tau^2}\frac{\log{k}+1}{k} = \mathcal{O}\left(\frac{\log{k}}{k}\right),
\end{align*}
by a standard inequality bounding harmonic series.

If instead $\theta \in\left(\frac{1}{2}, 1\right)$, then there exist a constant $\mu>0$,
$$
\left\|z^k-\hat{z}\right\|_D \leqslant \mu k^{-\frac{1-\theta}{2 \theta-1}}.
$$
But this implies the following two bounds:
\begin{equation*}
\frac{1}{k}\sum_{i=1}^k\|z^{i}-\hat{z}\|_D^2 \leqslant \frac{1}{k}\mu^2\sum_{i=1}^k i^{-2\frac{1-\theta}{2 \theta-1}} \leqslant \frac{1}{k}\mu^2\zeta\left(2\frac{1-\theta}{2 \theta-1}\right)=\mathcal{O}\left(\frac{1}{k}\right)
\end{equation*}
and
\begin{align*}
\frac{1}{k}\sum_{i=1}^k\frac{1}{i}\sum^{i-1}_{j=1}\|z^j-\hat{z}\|_D^2 &\leqslant \frac{1}{k} \sum_{i=1}^k\frac{1}{i}\mu^2\zeta\left(2\frac{1-\theta}{2 \theta-1}\right) \\&\leqslant \frac{1}{k}\mu^2\zeta\left(2\frac{1-\theta}{2 \theta-1}\right)(\log{k}+1) = \mathcal{O}\left(\frac{\log k}{k}\right),
\end{align*}
where $\zeta$ denotes the Riemann zeta function.

Now, if $\theta=0$, denoting $\sum_{i=1}^\infty\|z^{i}-\hat{z}\|_D^2$ as $d$, which is guarranteed to be finite, since we converge in a finite number of steps by \Cref{thm:KLconvergence}, we have:
\begin{equation*}
\frac{1}{k}\sum_{i=1}^k\|z^{i}-\hat{z}\|_D^2 \leqslant \frac{d}{k} = \mathcal{O}\left(\frac{1}{k}\right) \quad 
\end{equation*}
and
\begin{align*}
\frac{1}{k}\sum_{i=1}^k\frac{1}{i}\sum^{i-1}_{j=1}\|z^j-\hat{z}\|_D^2 &\leqslant \frac{1}{k} \sum_{i=1}^k\frac{1}{i}d \leqslant d\frac{\log{k}+1}{k} = \mathcal{O}\left(\frac{\log{k}}{k}\right).
\end{align*}
Therefore, in all three cases we achieve the following bound:
\begin{equation*}
L\left(\bar{x}^k, y\right)-L\left(x, \bar{y}^k\right) \leqslant \frac{1}{2}\|z-\hat{z}\|_D^2+\mathcal{O}\left(\frac{\log k}{k}\right).
\end{equation*}
\end{proof}

\section{Proof of \Cref{thm:universal_IWCNN}}\label{ap:universal_IWCNN}
Before we can move onto proof of \Cref{thm:universal_IWCNN}, we are going to need to establish a number of results relating to both universal approximation properties of standard neural networks, as well as approximation properties of Moreau envelopes. 
\begin{lemma}[Lemma 3 in \cite{10.1007/BF02559552}]\label{lem:moreauenv}
Let $f$ be a proper extended-real-valued l.s.c. bounded below function on $\X$. 

For any $t>0$ it holds that
\begin{itemize}
    \item[(i)] $f_t$ is a real-valued minorant of $f$;
    \item [(ii)] $\inf f_t=\inf f$;
    \item [(iii)] $\arg \min f_t=\arg \min f$.
\end{itemize}
Furthermore, when $t \downarrow 0$:
\begin{itemize}
    \item[(iv)] $f_t \rightarrow f$ pointwise;
    \item[(v)] $f_t \rightarrow f$ with respect to the epi-distance topology provided $\|\cdot\|_{\X}$ is uniformly rotund;
    \item[(vi)] $f_t \rightarrow f$ uniformly on bounded sets when $f$ is uniformly continuous on bounded sets and $\|\cdot\|_{\X}$ is uniformly rotund.
\end{itemize}
\end{lemma}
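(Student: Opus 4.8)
The plan is to dispatch (i)--(iv) by elementary arguments directly from the definition $f_t(x) = \inf_{z \in \X}\{f(z) + \frac{1}{2t}\|z-x\|_{\X}^2\}$, using only that $f$ is proper, bounded below, and l.s.c., and then to treat the two topological statements (v)--(vi) separately, with (v) being the genuine obstacle. For (i), the minorant property is immediate by taking $z = x$ in the infimum, giving $f_t(x) \leqslant f(x)$. Finiteness is then bracketed from both sides: boundedness below by some $m$ gives $f_t(x) \geqslant m > -\infty$ since the quadratic term is nonnegative, while properness supplies a point $z_0$ with $f(z_0) < \infty$, whence $f_t(x) \leqslant f(z_0) + \frac{1}{2t}\|z_0 - x\|_{\X}^2 < \infty$ for every $x$, so $f_t$ is real-valued.

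For (ii), one inequality follows from $f_t \leqslant f$, and the reverse from $f_t(x) \geqslant \inf_{z} f(z)$ (the quadratic being nonnegative), so $\inf f_t = \inf f$. For (iii), if $x^* \in \argmin f$ then $f_t(x^*) \leqslant f(x^*) = \inf f = \inf f_t$ forces $x^* \in \argmin f_t$. The reverse direction is where lower semicontinuity enters: given $x^* \in \argmin f_t$, I would take a minimizing sequence $z_n$ for $f_t(x^*)$; since $f(z_n) \geqslant \inf f = f_t(x^*)$, the penalty terms $\frac{1}{2t}\|z_n - x^*\|_{\X}^2$ must vanish, so $z_n \to x^*$ and $f(z_n) \to \inf f$, and l.s.c. yields $f(x^*) \leqslant \liminf_n f(z_n) = \inf f$, i.e. $x^* \in \argmin f$.

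For (iv), I would first note monotonicity: $t \mapsto f_t(x)$ is nondecreasing as $t \downarrow 0$ (smaller $t$ means a larger penalty), and is bounded above by $f(x)$, so the limit $L(x) \leqslant f(x)$ exists. To show $L(x) = f(x)$, choose near-minimizers $z_t$ with $f(z_t) + \frac{1}{2t}\|z_t - x\|_{\X}^2 \leqslant f_t(x) + t$; boundedness below then gives $\|z_t - x\|_{\X}^2 \leqslant 2t(f(x) - \inf f + t) \to 0$, so $z_t \to x$, and l.s.c. gives $f(x) \leqslant \liminf_{t \downarrow 0} f(z_t) \leqslant L(x)$, closing the loop. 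This same displacement estimate drives (vi): on a bounded set $B$, uniform continuity on bounded sets makes $f$ bounded by some $M$ there, so the displacement $\|z_t - x\|_{\X}$ is of order $\sqrt{t}$ uniformly in $x \in B$, and then $0 \leqslant f(x) - f_t(x) \leqslant f(x) - f(z_t) + t$ is controlled by the modulus of uniform continuity of $f$ evaluated at this displacement, giving uniform convergence on bounded sets.

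The main obstacle is (v), convergence in the epi-distance (Attouch--Wets) topology, which does not follow from pointwise or even locally uniform convergence alone and is precisely where uniform rotundity of $\|\cdot\|_{\X}$ is essential. Here I would invoke the machinery of epi-convergence: each $f_t$ is continuous (being an infimal convolution with the continuous coercive kernel $\frac{1}{2t}\|\cdot\|_{\X}^2$), the family increases pointwise to the l.s.c. $f$ by (iv), and under uniform rotundity the near-minimizers in the envelope are controlled uniformly on bounded sets, which upgrades this monotone pointwise convergence to Hausdorff convergence of the truncated epigraphs. Concretely, for $u$ in a fixed bounded set I would estimate both the gap and the excess between $\mathrm{epi}\,f_t$ and $\mathrm{epi}\,f$ and show each tends to zero, with uniform rotundity entering exactly to guarantee that an approximate minimizer cannot drift away from $x$ while the quadratic penalty stays small. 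I expect this quantitative coupling of rotundity to envelope minimizers to be the technically delicate part, whereas (i)--(iv) and (vi) reduce to the two displacement estimates above.
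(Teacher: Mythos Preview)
The paper does not prove this lemma: it is quoted verbatim as ``Lemma 3 in \cite{10.1007/BF02559552}'' and invoked without proof (only part (vi) is actually used, in the proof of \Cref{thm:universal_IWCNN}). There is therefore no in-paper argument to compare your proposal against.

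On its own merits: your treatments of (i)--(iii) are correct and standard. For (iv), your displacement bound $\|z_t - x\|_{\X}^2 \leqslant 2t(f(x) - \inf f + t)$ only covers the case $f(x) < \infty$; when $f(x) = +\infty$ you need a separate short argument via lower semicontinuity (take a neighbourhood on which $f > M$ and split the infimum accordingly) to conclude $f_t(x) \to +\infty$. Your direct argument for (vi) is essentially correct and, interestingly, never uses uniform rotundity---the displacement estimate plus uniform continuity on a slightly enlarged bounded set suffices---which suggests that in the original reference (vi) may be obtained as a corollary of (v) rather than proved directly. Your sketch for (v) correctly flags it as the substantive step, but the phrase ``uniform rotundity entering exactly to guarantee that an approximate minimizer cannot drift'' is not yet an argument; establishing Attouch--Wets convergence here genuinely requires quantitative control that you have not supplied, so (v) remains a gap in your proposal.
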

Note, that e.g. the Euclidean norm is uniformly rotund. In what follows we use point (vi) in order to approximate the desired function with a weak convex smooth approximant.
\begin{theorem}[{Representation power of ICNN; Theorem 1 in  \cite{chen2018optimal}}]
\label{thm:icnn_approx}
    For any Lipschitz convex function over a compact domain, there exists a neural network with nonnegative weights and ReLU activation functions (i.e., an ICNN) that approximates it uniformly within $\varepsilon$, i.e. in the $\|\cdot\|_\infty$ norm.
\end{theorem}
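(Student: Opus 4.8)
The plan is to prove the theorem in two stages: first reduce uniform approximation of a Lipschitz convex function to the \emph{exact} representation of a maximum of finitely many affine functions, and then realise any such maximum by an ICNN. For the first stage, let $f\colon K \to \mathbb{R}$ be convex and $L$-Lipschitz on the compact convex domain $K \subseteq \X \cong \mathbb{R}^n$. At each point $x_j \in K$ I pick a subgradient $g_j \in \partial f(x_j)$ and form the affine minorant $\ell_j(x) := f(x_j) + \langle g_j, x - x_j\rangle_{\X}$; the convex subgradient inequality gives $\ell_j \leqslant f$ everywhere, so $g := \max_j \ell_j \leqslant f$. Since $f$ is in particular weakly convex, \Cref{lem:Lipsubdiff} yields $\|g_j\|_{\X^*} \leqslant L$, so for any $x$ with $\|x - x_j\|_{\X} \leqslant r$,
$$f(x) - \ell_j(x) = f(x) - f(x_j) - \langle g_j, x - x_j\rangle_{\X} \leqslant L\|x-x_j\|_{\X} + \|g_j\|_{\X^*}\|x-x_j\|_{\X} \leqslant 2Lr.$$
Covering the compact $K$ by finitely many balls of radius $r = \varepsilon/(2L)$ centred at $x_1,\dots,x_m$, and using $g \geqslant \ell_j$ on the $j$-th ball together with $g \leqslant f$, gives $0 \leqslant f - g \leqslant \varepsilon$ uniformly on $K$. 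Thus $g$, a maximum of $m$ affine functions, approximates $f$ within $\varepsilon$ in $\|\cdot\|_\infty$.

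For the second stage I would build an ICNN that computes $g = \max_j \ell_j$ exactly through the recursion $M_1 := \ell_1$ and $M_k := \max(M_{k-1}, \ell_k) = \ell_k + \mathrm{ReLU}(M_{k-1} - \ell_k)$, with output $g = M_m$. The crucial structural point is the ICNN sign constraint: only the hidden-to-hidden weights must be nonnegative, while the input skip weights are unconstrained. In the recursion the sign-indefinite affine data $\pm\ell_k$ is recomputed at each layer directly from the input $x$ through skip connections, so its arbitrary coefficients are admissible; the only propagated hidden quantity, $M_{k-1}$, enters both the $\mathrm{ReLU}$ argument and the additive term with coefficient $+1 \geqslant 0$. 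After a global shift making every $\ell_k$ nonnegative on $K$ (so that the propagated $M_{k-1}$ coincides with a $\mathrm{ReLU}$ output and is genuinely realisable as a hidden activation), this exhibits $g$ as a bona fide ICNN. Combined with the first stage, this proves the claim.

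The main obstacle I anticipate is the careful bookkeeping of the nonnegativity constraint in the second stage: one must verify that every weight acting on a previously activated hidden unit is nonnegative, while routing all sign-indefinite affine coefficients through the input skip connections, and reconcile this with the fact that hidden activations are themselves nonnegative (handled by the shift). A secondary technical point is that the supporting-hyperplane argument needs $K$ convex and subgradients to exist; for boundary points one either works on a slightly enlarged domain or passes to the $L$-Lipschitz extension of $f$, neither of which affects the uniform estimate.
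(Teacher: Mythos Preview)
The paper does not prove this statement; it is quoted verbatim as Theorem~1 of \cite{chen2018optimal} and used as a black box in the proof of \Cref{thm:universal_IWCNN}. There is therefore no proof in the paper to compare yours against.

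That said, your argument is correct and is essentially the standard one. A small simplification: the global shift you introduce to force the hidden activations to be nonnegative is not actually needed. If you set the $k$-th hidden scalar to $z_k := \mathrm{ReLU}(M_k - \ell_{k+1})$ rather than to $M_k$ itself, then the pre-activation at the next layer is
\[
1 \cdot z_k + (\ell_{k+1} - \ell_{k+2})(x) = \bigl(\ell_{k+1} + \mathrm{ReLU}(M_k - \ell_{k+1})\bigr) - \ell_{k+2} = M_{k+1} - \ell_{k+2},
\]
so the recursion closes with a single hidden neuron per layer, all hidden-to-hidden weights equal to $+1$, and no positivity assumption on $M_k$ anywhere. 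The final output is $M_m = \ell_m(x) + 1\cdot z_{m-1}$, again with nonnegative hidden weight. Your version with the shift also works, but the bookkeeping you flag as the ``main obstacle'' disappears once you propagate $\mathrm{ReLU}(M_k - \ell_{k+1})$ instead of $M_k$.
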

A further summary can be found in \cite{Hanin_2019}. The last piece for the proof of universal approximation of IWCNNs is a universal approximation result for neural networks with smooth activations:
\begin{theorem}[Theorem 3.2 in \cite{kidger2020universal}]\label{thm:nn_univapprox}
Let $\rho: \mathbb{R} \rightarrow \mathbb{R}$ be any nonaffine continuous function which is continuously differentiable at at least one point, with nonzero derivative at that point. Let $K \subseteq \mathbb{R}^n$ be compact. Then $\mathcal{N N}_{n, m, n+m+2}^\rho$ is dense in $C\left(K ; \mathbb{R}^m\right)$ with respect to the uniform norm.
\end{theorem}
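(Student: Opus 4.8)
The plan is to prove the result in the two stages already set up before the statement: first reduce the approximation of an arbitrary continuous target to that of a weakly convex one, and then show that every weakly convex function is uniformly approximable by an IWCNN. Fix a continuous target $h$ on the compact domain $\Omega \subseteq \mathbb{R}^k$ and an $\varepsilon>0$; embedding $\Omega$ in a compact convex set (extending $h$ by the Tietze theorem if necessary) lets me invoke Theorem~6 of \cite{sun2019least} to obtain a $\rho$-weakly convex $f$ with $\|f-h\|_\infty<\varepsilon/3$. It therefore suffices to approximate this $f$ by an IWCNN.

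For the core step I would exhibit $f$ as a composition $f = g\circ\Phi$ of a convex function with a smooth one, matching the IWCNN architecture. Setting $h_0 := f + \tfrac{\rho}{2}\|\cdot\|^2$ (which is convex, finite and continuous, hence locally Lipschitz and so Lipschitz on any compact set), $\Phi(x):=(x,\tfrac{\rho}{2}\|x\|^2)\in\mathbb{R}^{k+1}$, and $g(u,s):=h_0(u)-s$, one checks that $g$ is jointly convex (convex in $u$, affine in $s$) and Lipschitz on compacta, and that $g(\Phi(x))=f(x)$ identically. I would then invoke \Cref{thm:nn_univapprox} to approximate the continuous map $\Phi$ by a smooth-activation network $g_{sm}$ to within $\delta$ in $\|\cdot\|_\infty$, and \Cref{thm:icnn_approx} to approximate the Lipschitz convex $g$ by an ICNN $g^{\textrm{ICNN}}$ to within $\varepsilon/3$ on a compact set $K$ containing $g_{sm}(\Omega)\cup\Phi(\Omega)$. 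The candidate IWCNN is $f_\theta:=g^{\textrm{ICNN}}\circ g_{sm}$, which is of exactly the form of \Cref{def:IWCNN}.

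The error then splits as
\[
|f_\theta(x)-f(x)|\leqslant |g^{\textrm{ICNN}}(g_{sm}(x))-g(g_{sm}(x))| + |g(g_{sm}(x))-g(\Phi(x))| \leqslant \tfrac{\varepsilon}{3} + L_g\,\delta,
\]
where $L_g$ is the Lipschitz constant of $g$ on $K$; choosing $\delta<\varepsilon/(3L_g)$ gives $\|f_\theta-f\|_\infty<2\varepsilon/3$ and hence $\|f_\theta-h\|_\infty<\varepsilon$. The step I expect to require the most care is the propagation of error through the composition, which is precisely why it matters that the outer convex function $g$ is Lipschitz — a property guaranteed because a finite convex function is locally Lipschitz and $g$ is being used on a compact set; without it the ICNN error on the inputs fed by $g_{sm}$ could not be controlled.

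The remaining subtlety is the claim ``for any $m$'', i.e. for every intermediate dimension. The construction above naturally uses $m=k+1$; for $m>k+1$ I would pad $\Phi$ with constant coordinates and let $g$ ignore them, which preserves both convexity and the Lipschitz bound. For the low-dimensional regime $m\leqslant k$ the quadratic-augmentation embedding no longer fits, and here I would instead use that the outer ICNN can approximate the convex, $1$-Lipschitz coordinate projection $\pi_1:\mathbb{R}^m\to\mathbb{R}$ (again by \Cref{thm:icnn_approx}); composing it with a smooth network approximating the continuous map $x\mapsto(h(x),0,\dots,0)\in C(\Omega;\mathbb{R}^m)$ (by \Cref{thm:nn_univapprox}) yields an IWCNN approximating $h$, with the composite error controlled by the $1$-Lipschitzness of $\pi_1$. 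This projection argument in fact works uniformly for every $m\geqslant 1$, so the theorem holds exactly as stated.
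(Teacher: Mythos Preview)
Your proposal does not address the stated theorem. The statement you were given is the Kidger--Lyons universal approximation result (Theorem~3.2 of \cite{kidger2020universal}), which the paper merely \emph{cites} as an auxiliary tool and does not prove; indeed, your own argument invokes \Cref{thm:nn_univapprox} as a black box in two places, so as a proof of that statement it would be circular. What you have actually sketched is a proof of \Cref{thm:universal_IWCNN} (universal approximation for IWCNNs), not of the theorem quoted.

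Read instead as a proof of \Cref{thm:universal_IWCNN}, your final ``projection argument'' --- approximate $x\mapsto(h(x),0,\dots,0)$ by a smooth-activation network via \Cref{thm:nn_univapprox} and compose with an ICNN approximating the $1$-Lipschitz convex coordinate map $\pi_1$ via \Cref{thm:icnn_approx} --- is precisely the paper's construction, and you correctly observe that it handles every $m\geqslant 1$. The paper differs only in first replacing the target by its Moreau envelope $f_t$ (via \Cref{lem:moreauenv}) before approximating; this is not strictly needed once one has the projection route, since the outer map is already $1$-Lipschitz convex regardless. Your earlier quadratic-augmentation decomposition $\Phi(x)=(x,\tfrac{\rho}{2}\|x\|^2)$, $g(u,s)=h_0(u)-s$ is a correct alternative for $m\geqslant k+1$, but as you yourself note, the projection construction subsumes it, so the detour through Sun's theorem and the augmentation is unnecessary.
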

Here $\mathcal{N N}_{n, m, n+m+2}^\rho$ represents the class of functions
$\mathbb{R}^n\to\mathbb{R}^m$ described by feedforward neural networks with $n$ neurons in the input layer, $m$ neurons
in the output layer, and an arbitrary number of hidden layers, each with $n+m+2$ neurons and activation function $\rho$ as in \cite{kidger2020universal}.

With these results, we can now prove \Cref{thm:universal_IWCNN}.
\begin{proof}[Proof of \Cref{thm:universal_IWCNN}]
Fix $\varepsilon>0$. Consider some target proper extended-real-valued l.s.c. function $f\in\mathcal{C}_0(X)$, with $X$ compact. Then, by %
\cite{sun2019least} Theorem 6, there exists $f_{wc}\in \mathcal{W C}(X) \cap \mathcal{C}_0(X)$ with $\|f-f_{wc}\|_{\infty}<\frac{\varepsilon}{2}$. In fact, we know exactly how to find such approximation using the Moreau envelope: $f_{wc} = f_t$ for some $t>0$ by \Cref{lem:moreauenv}, which furthermore is Lipschitz smooth by \Cref{thm:moreauenv_wc}. As a result, we can write $f_{wc} = \operatorname{Id}\circ\;f_t$. Now, more generically for $m\neq1$, we can e.g. consider just the first element, i.e. $g^\textrm{conv}: x \to x_1$ and $g^\textrm{sm}: x\to(e_{t}f,0,\cdots,0)^\top$, with $f_{wc}= g^\textrm{conv}\circ g^\textrm{sm}$.

By \Cref{thm:nn_univapprox} there exists a neural network such that $\|g^\textrm{sm}_{\theta_2}-g^\textrm{sm}\|_{\infty}<\frac{\varepsilon}{2}$. Since $g^\textrm{sm}_{\theta_2}$ is continuous and $X$ is compact, $g^\textrm{sm}_{\theta_2}(X)$ is compact. Therefore, by \Cref{thm:icnn_approx}, as $g^\textrm{conv}$ is Lipschitz convex, there exists an ICNN such that $\|g^\textrm{ICNN}_{\theta_1}-g^\textrm{conv}\|_{\infty}<\frac{\varepsilon}{2}$ on $\operatorname{Im} g^\textrm{sm}_{\theta_2}$\footnote{Technically in this we are approximating the identity map, which can be exact with \texttt{ReLU}/\texttt{Leaky ReLU} activations.}.
Therefore, 
\begin{align*}
\|g^\textrm{ICNN}_{\theta_1} \circ g^\textrm{sm}_{\theta_2} - g^\textrm{conv}\circ g^\textrm{sm}\|_{\infty}      
&\leqslant \|g^\textrm{ICNN}_{\theta_1} \circ g^\textrm{sm}_{\theta_2} - g^\textrm{conv} \circ g^\textrm{sm}_{\theta_2}\|_{\infty}+\|g^\textrm{conv} \circ g^\textrm{sm}_{\theta_2} - g^\textrm{conv}\circ g^\textrm{sm}\|_{\infty}\\
&\leqslant \|g^\textrm{ICNN}_{\theta_1} - g^\textrm{conv}\|_{\infty}+\| g^\textrm{sm}_{\theta_2} - g^\textrm{sm}\|_{\infty} \\&< \varepsilon,
\end{align*}
by 1-Lipschitzness of $g^\textrm{conv}$. Therefore, IWCNNs are dense in the space of continuous functions. 
\end{proof}

\section{Experimental set-up and additional data visualisations for \Cref{sec:results}}
\subsection{Experimental set-up} 
\label{ap:expsetup}

We use human abdominal CT scans for 10 patients provided by Mayo Clinic for the low-dose CT grand challenge \cite{mayo_ct_challenge}. The training dataset for CT experiments consists of a total of 2250 2D slices, each of dimension $512 \times 512$, corresponding to 9 patients. The remaining 128 slices corresponding to one patient are used to evaluate the reconstruction performance. 

Projection data is simulated in ODL \cite{odl} with a GPU-accelerated \textit{astra} back-end, using a parallel-beam acquisition geometry with 350 angles and 700 rays/angle, using additive Gaussian noise with $\sigma=3.2$. The pseudoinverse reconstruction is taken to be images obtained using FBP. For limited angle experiments, data is simulated with a missing angular wedge of 60$^{\circ}$. The native \texttt{odl} power method is used to approximate the norm of the operator.

The TV method was computed by employing the ADMM-based solver in ODL. 

All of the methods are implemented in \textit{PyTorch}~\cite{paszke2017automatic}. The LPD method is trained on pairs of target images and projected data, while the U-Net post-processor is trained on pairs of true images and the corresponding reconstructed FBPs. AR, ACR, ACNCR, and AWCR, in contrast, require ground-truth and FBP images drawn from their marginal distributions (and hence not necessarily paired). The hyperparameter $\mu_0$ from \Cref{def:AWCR} is chosen to be the same as in \cite{ACR}: $\mu_0 = \log\left(1+\exp(-9.0)\right)$, and during training diverges from this value minimally, having almost no effect on the convexity constant of the regulariser. To aid with training, $\lambda$ from \Cref{eq:AR_loss} is first chosen to be small (0.1) and once the network is trained is increased to a larger value (10) as in \cite{ACR,AR}), and the network is fine-tuned. The \textit{RMSprop} optimizer, following the recommendation in \cite{AR,ACR} with a learning rate of $5\times 10^{-5}$ is used for training for a total of 50 epochs. For fine-tuning, the learning rate is reduced to $10^{-6}$. The AWCR architecture differed for the two experiments in the following way:

\textbf{Sparse view CT}\quad 
The ICNN component of the AWCR is constructed with 5 convolutional layers, using \texttt{LeakyReLU} activations and $5\times 5$ kernels with 16 channels. The smooth component of the AWCR is constructed using 6 convolutional layers, using \texttt{nn.SiLU} activations and $5\times 5$ kernels with a doubling number of channels from 16 and stride of 2, similar to that of \cite{AR}, with the last layer containing 128 channels

\textbf{Limited view CT}\quad 
The ICNN component of the AWCR is constructed with 5 convolutional layers, using \texttt{LeakyReLU} activations and $5\times 5$ kernels with 16 channels. Guided by the observations of \cite{ACR}, that reducing the total number of layers and the number of feature channels helps avoid overfitting in the limited angle setting, the smooth component of the AWCR is constructed using a single convolutional layers, using a \texttt{SiLU} activation, $7\times 7$ with 32 channels. To remain close to the ACR formulation, a residual structure is further employed.

The reconstruction in all adversarial regularisation cases is performed by solving the variational problem via gradient-descent for 1000 iterations with a step size of $10^{-6}$. For the AWCR-PD, the primal-dual algorithm is used for solving the variational problem. The proximal operator of the network is approximated by performing gradient descent with backtracking on the objective in \Cref{eq:pdhgm_update}. The step sizes are chosen to be equal to $0.1/\|\A\|$ in the sparse case, and in the limited angle are chosen to be equal to $10/\|\A\|$ and $0.1/\|\A\|$ to ensure that iterates do not diverge and that steps are large enough to provide good results. Akin to AR, reconstruction performance of AWCR can sometimes deteriorate if early stopping is not applied. For a fair comparison, we report the highest PSNR achieved by all methods during reconstruction. The regularisation parameter $\alpha$ is chosen according to \cite{AR} and is \emph{not} tuned.

\subsection{Additional data visualisations }
\label{ap:datavis}

\begin{figure*}[htp]
  \centering
  \begin{minipage}[t]{0.2\linewidth}
  \centering
  \vspace{0pt}
  \begin{tikzpicture}[spy using outlines={circle,red,magnification=3.0,size=1.2cm, connect spies}]   
    \node {\includegraphics[width=\linewidth]{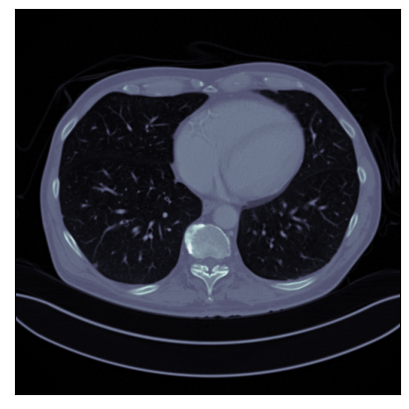}};
    \spy on (0.045,-0.6) in node [left] at (1.7,1.25);
    \spy on (-1.14,-1.06) in node [left] at (-0.4,1.25);  
  \end{tikzpicture}
  \vskip-0.5\baselineskip
  {\scriptsize Ground-truth}
  \end{minipage}\hfill
  \begin{minipage}[t]{0.2\linewidth}%
  \centering
  \vspace{0pt}
  \begin{tikzpicture}[spy using outlines={circle,red,magnification=3.0,size=1.2cm, connect spies}]   
    \node {\includegraphics[width=\linewidth]{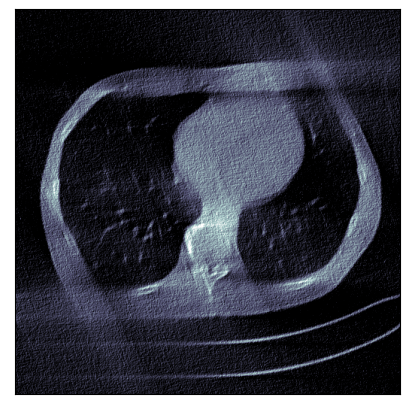}};
    \spy on (0.045,-0.6) in node [left] at (1.7,1.25);
    \spy on (-1.14,-1.06) in node [left] at (-0.4,1.25);  
  \end{tikzpicture}
  \vskip-0.5\baselineskip
  {\scriptsize FBP: 18.232 dB, 0.223}
  \end{minipage}\hfill
  \begin{minipage}[t]{0.2\linewidth}%
  \centering
  \vspace{0pt}
  \begin{tikzpicture}[spy using outlines={circle,red,magnification=3.0,size=1.2cm, connect spies}]   
    \node {\includegraphics[width=\linewidth]{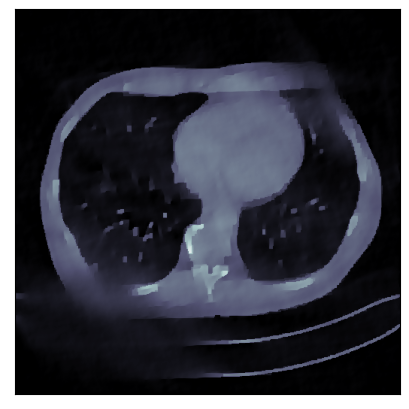}};
    \spy on (0.045,-0.6) in node [left] at (1.7,1.25);
    \spy on (-1.14,-1.06) in node [left] at (-0.4,1.25);  
  \end{tikzpicture}
  \vskip-0.5\baselineskip
  {\scriptsize TV: 23.694 dB, 0.749}
  \end{minipage}\hfill
  \begin{minipage}[t]{0.2\linewidth}%
  \centering
  \vspace{0pt}
  \begin{tikzpicture}[spy using outlines={circle,red,magnification=3.0,size=1.2cm, connect spies}]   
    \node {\includegraphics[width=\linewidth]{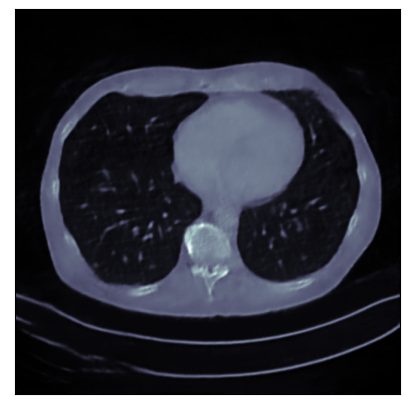}};
    \spy on (0.045,-0.6) in node [left] at (1.7,1.25);
    \spy on (-1.14,-1.06) in node [left] at (-0.4,1.25);  
  \end{tikzpicture}
  \vskip-0.5\baselineskip 
  {\scriptsize U-Net: 27.943 dB, 0.825}
  \end{minipage}\hfill
  \begin{minipage}[t]{0.2\linewidth}%
  \centering
  \vspace{0pt}
  \begin{tikzpicture}[spy using outlines={circle,red,magnification=3.0,size=1.2cm, connect spies}]   
    \node {\includegraphics[width=\linewidth]{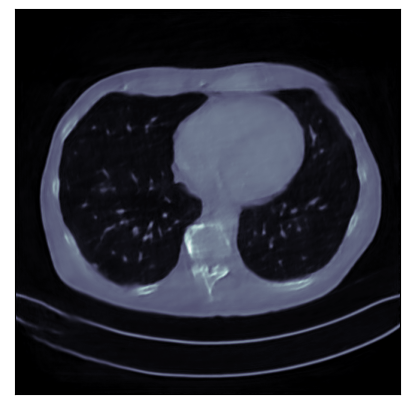}};
    \spy on (0.045,-0.6) in node [left] at (1.7,1.25);
    \spy on (-1.14,-1.06) in node [left] at (-0.4,1.25);  
  \end{tikzpicture}
  \vskip-0.5\baselineskip
  {\scriptsize LPD: 28.150 dB, 0.821}
  \end{minipage}\hfill
  \begin{minipage}[t]{0.2\linewidth}%
  \centering
  \vspace{0pt}
  \begin{tikzpicture}[spy using outlines={circle,red,magnification=3.0,size=1.2cm, connect spies}]   
    \node {\includegraphics[width=\linewidth]{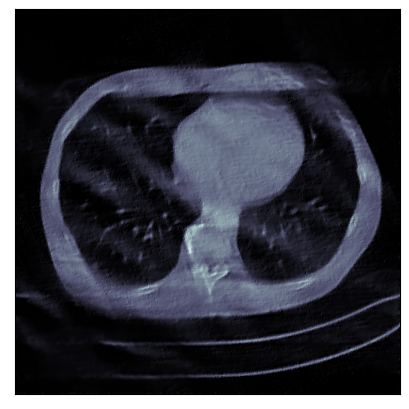}};
    \spy on (0.045,-0.6) in node [left] at (1.7,1.25);
    \spy on (-1.14,-1.06) in node [left] at (-0.4,1.25);  
  \end{tikzpicture}
  \vskip-0.5\baselineskip
  {\scriptsize AR: 24.959 dB, 0.648}
  \end{minipage}\hfill
  \begin{minipage}[t]{0.2\linewidth}%
  \centering
  \vspace{0pt}
  \begin{tikzpicture}[spy using outlines={circle,red,magnification=3.0,size=1.2cm, connect spies}]   
    \node {\includegraphics[width=\linewidth]{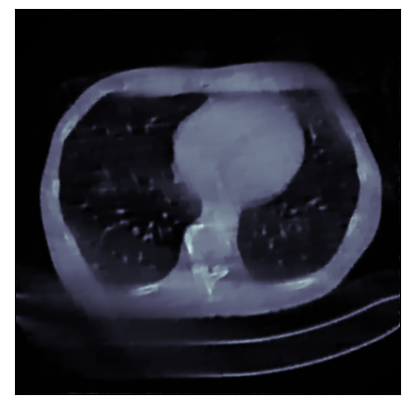}};
    \spy on (0.045,-0.6) in node [left] at (1.7,1.25);
    \spy on (-1.14,-1.06) in node [left] at (-0.4,1.25);  
  \end{tikzpicture}
  \vskip-0.5\baselineskip
  {\scriptsize ACR: 24.278 dB, 0.779}
  \end{minipage}\hfill
   \begin{minipage}[t]{0.2\linewidth}%
  \centering
  \vspace{0pt}
  \begin{tikzpicture}[spy using outlines={circle,red,magnification=3.0,size=1.2cm, connect spies}]   
    \node {\includegraphics[width=\linewidth]{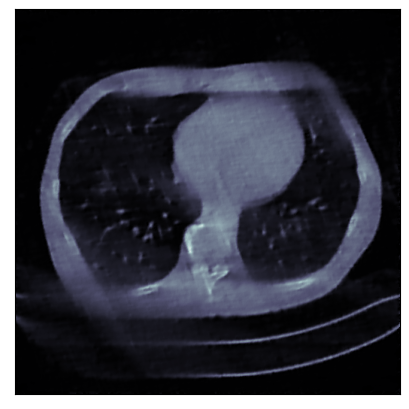}};
    \spy on (0.045,-0.6) in node [left] at (1.7,1.25);
    \spy on (-1.14,-1.06) in node [left] at (-0.4,1.25);  
  \end{tikzpicture}
  \vskip-0.5\baselineskip
  {\scriptsize ACNCR: 26.117 dB, 0.796}
  \end{minipage}\hfill
   \begin{minipage}[t]{0.2\linewidth}%
  \centering
  \vspace{0pt}
  \begin{tikzpicture}[spy using outlines={circle,red,magnification=3.0,size=1.2cm, connect spies}]   
    \node {\includegraphics[width=\linewidth]{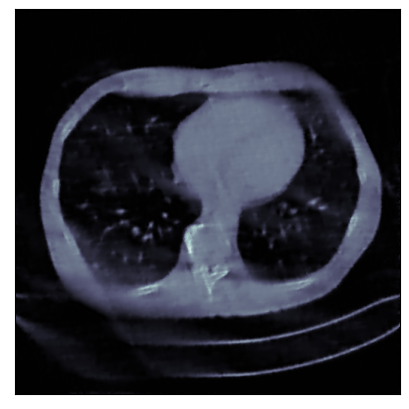}};
    \spy on (0.045,-0.6) in node [left] at (1.7,1.25);
    \spy on (-1.14,-1.06) in node [left] at (-0.4,1.25);  
  \end{tikzpicture}
  \vskip-0.5\baselineskip
  {\scriptsize AWCR: 26.213 dB, 0.766}
  \end{minipage}\hfill
   \begin{minipage}[t]{0.2\linewidth}%
  \centering
  \vspace{0pt}
  \begin{tikzpicture}[spy using outlines={circle,red,magnification=3.0,size=1.2cm, connect spies}]   
    \node {\includegraphics[width=\linewidth]{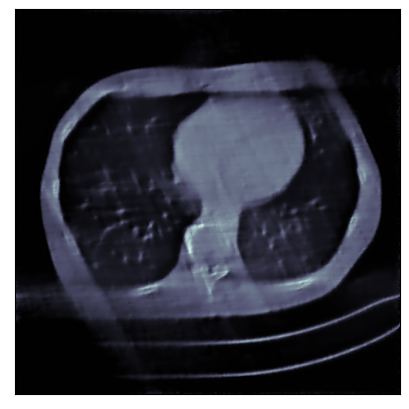}};
    \spy on (0.045,-0.6) in node [left] at (1.7,1.25);
    \spy on (-1.14,-1.06) in node [left] at (-0.4,1.25);  
  \end{tikzpicture}
  \vskip-0.5\baselineskip
  {\scriptsize AWCR-PD: 24.593 dB, 0.751}
  \end{minipage}\hfill
  \caption{\small{Reconstructed images obtained using different methods, along with the associated PSNR and SSIM, for limited view CT.  } \label{fig:visual2}}
\end{figure*}

\end{document}